\theoremstyle{definition}
\def\fnum{equation}
\newtheorem{Thm}[\fnum]{Theorem}
\newtheorem{Cor}[\fnum]{Corollary}
\newtheorem{Lem}[\fnum]{Lemma}
\newtheorem{Pro}[\fnum]{Proposition}
\numberwithin{equation}{section}
\newcommand{\Vol}{{\text{Vol}}}
\newcommand{\nn}{{\bf{n}}}
\newcommand{\dist}{{\text {dist}}}
\newcommand{\PG}{\Psi_{\Gamma}}
\newcommand{\Hess}{{\text {Hess}}}
\def\RR{{\bold R}}
\def\SS{{\bold S}}
\newcommand{\dv}{{\text {div}}}
\newcommand{\e}{{\text {e}}}
\newcommand{\bF}{{\bar{F}}}
\newcommand{\cT}{{\mathcal{T}}}
\newcommand{\cG}{{\mathcal{G}}}
\newcommand{\cg}{{\mathfrak{g}}}
\newcommand{\cL}{{\mathcal{L}}}
\newcommand{\cO}{{\mathcal{O}}}
\newcommand{\cM}{{\mathcal{M}}}
\newcommand{\cQ}{{\mathcal{Q}}}
\newcommand{\cR}{{\mathcal{R}}}
\newcommand{\eqr}[1]{(\ref{#1})}
\title[Dynamics of closed singularities]{Dynamics of closed singularities}
\author[]{Tobias Holck Colding}%
\address{MIT, Dept. of Math.\\
77 Massachusetts Avenue, Cambridge, MA 02139-4307.}
\author[]{William P. Minicozzi II}%
\thanks{The  authors
were partially supported by NSF Grants DMS 1812142 and DMS 1707270.}
\email{colding@math.mit.edu  and minicozz@math.mit.edu}
\begin{document}

\maketitle

\begin{abstract}
Parabolic geometric flows have the property of smoothing for short time however, over long time, singularities are typically unavoidable,  can be very nasty and may be impossible to classify.  The idea of this paper is that, by bringing in the dynamical properties of the flow, we obtain also smoothing for long time for generic initial conditions.  When combined with our earlier paper, \cite{CM1}, this allows us to show that, in an important special case, the singularities are the simplest possible.

We take here the first steps towards understanding the dynamics of  the flow.  The question of the dynamics  of a singularity   has two parts.  One is:   What are the dynamics near a singularity?  The second is:  What is the long time behavior of the flow of things close to the singularity.  That is, if the flow leaves a neighborhood of a singularity, is it possible for the flow to re-enter the same neighborhood at a much later time?  The first part is addressed in this paper, while the second will be addressed elsewhere.
  
\end{abstract}

\section{Introduction}

The mean curvature flow, or MCF for short, is the negative gradient flow of volume on the space of closed hypersurfaces in Euclidean space.   
Under the mean curvature flow, a hypersurface locally moves in the direction where the volume element decreases the fastest.    The flow has the effect of contracting a closed hypersurface, eventually leading to its extinction in finite time.  The key to understand MCF is to understand the singularities that the flow goes through before it becomes extinct.  Singularities are modeled by their blow-ups, which are called tangent flows and are   shrinkers \cite{H2}, \cite{I1}.  A one parameter family of hypersurfaces $M_t$ flowing by the MCF is said to be a  shrinker (or self-similar around the origin in space-time) if they evolve by rescaling, that is, if $M_t=\sqrt{-t}\,M_{-1}$.  Round spheres and cylinders   evolve self-similarly under the mean curvature flow.

Suppose that $M_t$ is a one-parameter family of closed hypersurfaces flowing by MCF.  We would like to analyze the flow near a singularity in space-time.  After translating, we may assume that the singularity occurs at the origin in space-time.  If we reparametrize and rescale the flow as follows $t\to M_{-\e^{-t}}/\sqrt{\e^{-t}}$, then we get a solution to the rescaled MCF equation.    The rescaled MCF is the negative gradient flow for the $F$-functional (or Gaussian surface area)
\begin{align}
	F(\Sigma) = 	\int_{\Sigma} \e^{ - \frac{|x|^2}{4} } \, .
\end{align}
 The fixed points of the rescaled MCF, or equivalently the critical points of the $F$-functional, are the shrinkers.   The rescaling that takes place to get to the rescaled MCF has the effect of turning the question of the dynamics of the MCF near a singularity into a question of the dynamics near a fixed point for the rescaled flow.  

It follows from this   that we can treat the rescaled MCF as a special kind of dynamical system that is the gradient flow of a globally defined function and where the fixed points are the singularity models for the original flow.  

\vskip4mm
A one-parameter family of hypersurfaces  $M_t \subset \RR^{n+1}$  flows by mean curvature if
\begin{equation}
  x_t =  \bar{H} \, .
\end{equation}
Here $x$ is the position vector, 
  $\bar{H} = - H \, \nn$ is the mean curvature vector, $\nn$ is the outward unit normal,   and the mean curvature $H$ is given by
\begin{equation}	\label{e:defH}
	H = \dv \,  \nn = \sum_{i=1}^n \, \langle \nabla_{e_i} \nn , e_i \rangle \, .
\end{equation}
The $e_i$'s form an orthonormal frame for the hypersurface\footnote{With this convention, $H$ is $n/R$ on the $n$-sphere of radius $R$ in $\RR^{n+1}$ and $H$ is $k/R$ on the ``cylinder'' $\SS^k \times \RR^{n-k} \subset \RR^{n+1}$ of radius $R$.}.  The rescaled MCF is the equation 
\begin{align}
	x_t = \left( \frac{ \langle x , \nn \rangle}{2} - H \right) \, \nn \, .
\end{align}
The first variation formulas for volume and weighted volume show that the negative gradient flows for volume and the $F$ functional are MCF and rescaled MCF, respectively.
As mentioned, the fixed points for the rescaled MCF, or equivalently the critical points for
the $F$ functional, are  shrinkers that are self-similar around the origin.  The  shrinker equation is 
\begin{equation}
	H = \frac{\langle x , \nn \rangle}{2} \, .
\end{equation}

  In \cite{CM1}, we showed that the only smooth stable  shrinkers are spheres, planes, and generalized cylinders (i.e., $\SS^k \times \RR^{n-k}$).    In particular, the round sphere is the only closed stable singularity for the mean curvature flow.   A closed  shrinker is said to be stable if, modulo translations and dilations, the second derivative of the $F$-functional is non-negative for all variations at the given  shrinker, see \cite{CM1} for the precise definition as well as the definition of stability for non-compact  shrinkers.   
  
  We will here analyze the behavior of the rescaled flow in a neighborhood of a closed unstable shrinker.  We show that, in a suitable sense, ``nearly every'' hypersurface in a neighborhood of the unstable   shrinkers leaves the neighborhood, even after accounting for translations and dilations.  In contrast, in a small neighborhood of the round sphere, all closed hypersurfaces are convex and thus all become extinct in a round sphere under the MCF by a result of Huisken, \cite{H1}.  The point in space-time where a closed hypersurface nearby the round sphere becomes extinct may be different from that of the given round sphere.  This corresponds to that, under the rescaled MCF, it may leave a neighborhood of the round sphere but does so near a translation of the round sphere.   Similarly, in a neighborhood of an unstable  shrinker, there are closed hypersurfaces that under the rescaled MCF leave the neighborhood of the  shrinker but do so in a trivial way, namely, near a translate of the given unstable  shrinker.   However,  we will show that a typical closed hypersurface near an unstable shrinker not only leaves a neighborhood of the  shrinker, but, when it does, is not close to a rigid motion or dilation of the given  shrinker.

 Angenent, \cite{A}, constructed by ODE methods a  shrinking donut in $\RR^3$ together with similar higher dimensional examples. Ê Angenent's example was given by rotating a simple closed curve in the plane around an axis and, thus, had the topology of a torus. ÊIn fact, numerical evidence (see Chopp, \cite{Ch}, and Ilmanen, \cite{I3})
   suggests that, unlike for the case of curves, Êa complete classification of  shrinkers 
is impossible in higher dimensions as the examples appear to be so plentiful and varied; cf. also \cite{KKM}, \cite{K} and \cite{Nu}.  See  the surveys \cite{CM2} and \cite{CMP} for further discussion.
  
   \subsection{Dynamics near a closed  shrinker}

Let $E$ be the Banach space of $C^{2,\alpha}$ functions on a smooth closed embedded hypersurface $\Sigma \subset \RR^{n+1}$ with unit normal $\nn$.  We are identifying $E$ with the space of $C^{2,\alpha}$ hypersurfaces near $\Sigma$ by mapping a function $u$ to its graph 
\begin{equation}
	\Sigma_u = \{ p + u(p) \, \nn (p) \, | \, p \in \Sigma \} \, .
\end{equation}
If $E_1 , E_2$ are subspaces of $E$ with $E_1 \cap E_2 = \{ 0 \}$ and that together span $E$, i.e., so that 
\begin{equation}
	E = \{ x_1 + x_2 \, | \, x_1 \in E_1 , x_2 \in E_2 \} \, ,
\end{equation}
then we will say that $E= E_1 \oplus E_2$ is a splitting of $E$.

\vskip2mm
The next theorem is the main result about the dynamics near a  shrinker.  The essence of this theorem is that ``nearly every'' hypersurface in a neighborhood of the given unstable singularity leaves the neighborhood under the recaled MCF and, when it does, is not near a translate, rotation or dilation of the given singularity.  

\begin{Thm} 	\label{t:one}
Suppose that $\Sigma^n \subset \RR^{n+1}$ is a smooth closed embedded  shrinker, but is not a sphere.  There exists an open  neighborhood $\cO = \cO_{\Sigma}$ of $\Sigma$ and  a  subset $ W$ of $\cO$ so that:
\begin{itemize}
\item There is a splitting $E= E_1 \oplus E_2$  with  $\dim (E_1) > 0$ 
so that  $W$ is contained in the graph $(x,u(x))$ of a continuous mapping     $u: E_2 \to E_1$.
\item  If $v \in \cO \setminus W$, then the rescaled mean curvature flow starting at $v$ leaves $\cO$ and the orbit of $\cO$ under the group of conformal linear transformations of $\RR^{n+1}.  $\footnote{The group of conformal linear transformations of $\RR^{n+1}$ is generated by the rigid motions and the dilations.}
\end{itemize}
\end{Thm}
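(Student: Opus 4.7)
The plan is to treat the rescaled MCF as an analytic semiflow on $E$ and obtain $W$ via a Lyapunov--Perron style invariant manifold construction, with the splitting of $E$ dictated by the spectrum of the linearization at $\Sigma$ together with the directions generated by the conformal linear group.

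I would first linearize the rescaled MCF at the fixed point $\Sigma$, obtaining the stability operator
\[
L = \LL + |A|^2 + \tfrac{1}{2} \, , \qquad \LL = \Delta - \tfrac{1}{2}\langle x, \nabla\,\cdot\,\rangle .
\]
Because $\Sigma$ is smooth, closed and embedded, $L$ is elliptic and self-adjoint in the Gaussian $L^2$ space, so its spectrum is real, discrete, and accumulates only at $-\infty$. Split $E = E_+ \oplus E_0 \oplus E_-$ by sign of eigenvalue. Translations of $\RR^{n+1}$ contribute an $(n+1)$-dimensional $1/2$-eigenspace $T_{\mathrm{tr}}\subset E_+$, the dilation contributes a $1$-dimensional $1$-eigenspace $T_{\mathrm{dil}}\subset E_+$, and infinitesimal rotations contribute Jacobi directions in $E_0$ (since $F$ is rotation invariant). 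Set $T = T_{\mathrm{tr}}\oplus T_{\mathrm{dil}}$. By \cite{CM1}, $\Sigma$ is unstable modulo rigid motions and dilations, which forces $E_+$ to strictly contain $T$; I fix a splitting $E_+ = E_1 \oplus T$ with $\dim E_1 > 0$ and set $E_2 = T \oplus E_0 \oplus E_-$, yielding the required splitting $E = E_1 \oplus E_2$.

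To construct $W$, I would slice by the (finite-dimensional) action of the conformal linear group $G$ on a neighborhood of $\Sigma$, reducing to a semiflow on a local slice transverse to $G\cdot\Sigma$ whose linearization has strictly positive spectrum on $E_1$ and non-positive spectrum on the remaining directions. A standard Lyapunov--Perron construction for analytic quasilinear semiflows in little-H\"older spaces (in the spirit of Henry and Lunardi) applied to this reduced flow produces a continuous local center--stable manifold in the slice; transporting by $G$ gives the graph of a continuous $u: E_2 \to E_1$ with $u(0)=0$ and $Du(0)=0$, and I set $W$ equal to the portion of this graph lying in $\cO$.

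For the escape statement, fix $v\in\cO\setminus W$; its $E_1$-component is nonzero. The unstable-manifold dichotomy forces this component to grow at rate at least $\lambda_\ast := \min\mathrm{spec}\,L|_{E_1} > 0$ modulo a quadratic remainder, so $v_t$ leaves any sufficiently small neighborhood of $\Sigma$ in finite rescaled time. To upgrade this to escape from $G\cdot\cO$, I argue by contradiction: if for every $t$ there is $g_t\in G$ with $g_t^{-1}v_t\in\cO$, then for $g_t$ close to the identity the infinitesimal action of $g_t^{-1}$ moves $v_t$ only in $T\oplus E_0\subset E_2$ to first order, so the $E_1$-component of $g_t^{-1}v_t - \Sigma$ equals that of $v_t - \Sigma$ up to a quadratic error and hence still grows exponentially, contradicting $g_t^{-1}v_t\in\cO$; while for $g_t$ far from the identity, $g_t^{-1}(\Sigma)$ is already outside $\cO$ and $g_t^{-1}v_t$ follows by the triangle inequality. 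The hard part is exactly this last step: controlling the quadratic error from a finite (not merely infinitesimal) conformal transformation, uniformly in $v\in\cO\setminus W$ and $g\in G$, so that it stays dominated by the exponentially growing $E_1$-component. A subordinate technical issue is the choice of function space in which the parabolic semigroup estimates, the smooth $G$-action, and the slice construction are all simultaneously available; the natural candidate is a little-H\"older $C^{2,\alpha}$-space.
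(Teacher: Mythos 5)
Your proposal has the right overall architecture (spectral splitting, manifold construction, argue escape), but two essential steps go wrong or are left unresolved.

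\textbf{The splitting is at the wrong spectral threshold.} You split $E$ by the sign of the eigenvalue of $L$ and then set $E_1 = E_+/T$, so $E_1$ would contain any eigenspaces of $L$ with eigenvalues in $(0,1]$ that do not come from the group, while $E_2$ contains the dilation direction where $L$ has eigenvalue $1$. For a general closed non-spherical shrinker there is no reason the only $L$-eigenvalues in $(0,1]$ are translations ($1/2$) and dilations ($1$). If $E_1$ contains a direction with $L$-eigenvalue $\beta \in (0,1)$, the time-one linear flow expands it by $\e^{\beta} < \e$, while it expands the dilation direction in $E_2$ by $\e$. Then the expansion on $E_1$ does \emph{not} dominate the expansion on $E_2$, and every version of the invariant-manifold argument you invoke (Lyapunov--Perron or Hadamard graph transform) needs exactly that domination; the cone condition fails and the fixed-point iteration does not contract. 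The correct split puts $E_1 = \operatorname{span}\{w_i : L w_i = -\mu_i w_i, \ \mu_i < -1\}$, i.e.\ $L$-eigenvalues \emph{strictly greater than} $1$, and $E_2 = $ everything else. Since the dilation, translation, and rotation directions all have $\mu_i \in \{-1, -1/2, 0\} \geq -1$, they automatically land in $E_2$; the CM1 result that $\mu_1 < -1$ guarantees $\dim E_1 > 0$; and the spectral gap between $\mu = -1$ and the next eigenvalue below gives the strict domination you need.

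\textbf{The escape from the group orbit is not actually proved.} You correctly identify the hard step --- controlling the error from a \emph{finite} (not infinitesimal) conformal transformation uniformly so that it stays subordinate to the exponentially growing $E_1$-component --- but you stop there. This is precisely where the substantial work lies. The paper handles it in two layers. Translations and dilations are removed by a \emph{balancing map} $\gamma$: each nearby hypersurface has a unique center and scale maximizing the Gaussian area, $\gamma$ moves it to the canonical balanced position, and $\gamma$ commutes with rescaled MCF and is $Q$-Lipschitz close to the orthogonal projection away from $\cg$. The dynamics are then studied on the balanced slice for the composed map $\PG = \gamma \circ \Psi$. The remaining (compact) rotation group is not sliced out but handled inside the invariant-set construction: $W_0$ is the set of points whose $\PG$-orbit never leaves an $s$-tubular neighborhood of the rotation orbit, and one needs precise quantitative transversality and near-isometry properties of the rotation action in the $Q$-norm (properties $(\cR 0)$--$(\cR 2)$, verified via a ``graph over a graph'' lemma). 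The final exclusion of the conformal orbit uses compactness of the rotations plus the fact that the balancing data $\rho(v_i) \to (0,1)$ forces the translation/dilation factors to tend to the identity. None of this is present in your sketch, and the triangle-inequality argument you propose for $g$ far from the identity is not enough on its own because translations and dilations are non-compact and distort the $C^{2,\alpha}$ and $W^{1,2}$ norms.

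A smaller issue: you assert the graph has $Du(0) = 0$; in the paper's construction, $u$ is only shown to be $Q$-Lipschitz with norm one (Lipschitz in the Banach norm if $E_1$ is finite-dimensional), and no tangency is claimed, which is all the theorem needs.
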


The space $E_2$ is loosely speaking the span of all the contracting directions for the flow together with all the directions tangent to the action of the conformal linear group.    It turns out that all the directions tangent to the group action are expanding directions for the flow.

Recall that the (local) stable manifold is the set of points $x$ near the fixed point so that the flow starting from $x$ is defined for all time, remains near the fixed point, and converges to the fixed point as $t\to \infty$.
Obviously, Theorem \ref{t:one} implies that the local stable manifold  is contained in $W$.  

There are several earlier results that analyze rescaled MCF  near a closed   shrinker, but all of these are for round circles and spheres which are stable under the flow.  The earliest are the global results of Gage-Hamilton, \cite{GH}, and Huisken, \cite{H1}, showing that closed embedded convex hypersurfaces flow to spheres.   There is a   later   estimate  of Sesum, \cite{Se},  on the rate of convergence  in Huisken's theorem.
 There is also the stable manifold theorem of Epstein-Weinstein, \cite{EW}, from the late 1980s for the curve shortening flow that also applies to closed immersed  shrinking curves, but does not incorporate the group action.     In particular, for something to be in Epstein-Weinstein's stable manifold, then under the rescaled flow it has to limit into the given  shrinking curve.  In other words, for a curve to be in their stable manifold it is not enough that it limit into a rotation, translation or dilation of the  shrinking curve.  
 Theorem \ref{t:one} deals with unstable critical points, where we do not have the geometric estimates of the convex case. The dynamics is greatly complicated by the action of the non-compact group of conformal linear transformations.

\subsection{The heuristics of the local dynamics}

We close this introduction by indicating why Theorem  \ref{t:one}   should hold.  Before getting to this, it is useful to recall the simple case of gradient flows near a critical point on a finite dimensional manifold.  Suppose therefore that $\bF: \RR^2 \to \RR$ is a smooth function with a non-degenerate critical point at $0$ (so $\nabla \bF(0) = 0$, but the Hessian of $\bF$ at $0$ has rank $2$).  The behavior of the negative gradient flow
\begin{equation}
(x' , y') = - \nabla \bF (x,y) 
\end{equation}
is determined by the Hessian of $\bF$ at $0$.  For instance, if $\bF(x,y)=\frac{a}{2}\,x^2+\frac{b}{2}\,y^2$ for constants $a$ and $b$, then the negative gradient flow solves the ODE's $x'=-a\,x$ and $y'=-b\,y$.  Hence, the flow lines are given by $x=\e^{-at}\,x(0)$ and $y=\e^{-bt}\,y(0)$.   
From this we see that the behavior  of the flow near a critical point depends on the index of the critical point.  The critical point $0$ is ``generic'', or dynamically stable, if and only if it has index $0$.  When the index is positive, the critical point is not generic and a ``random'' flow line will miss the critical point.

 We will next very briefly explain the underlying reason for the above theorem about the local dynamics near a closed  shrinker and why it is an infinite dimensional and nonlinear version of the simple finite dimensional example just discussed.  
 
 Suppose $\Sigma$ is a manifold and $f$ is a function on $\Sigma$.  
Let $w_i$ be an orthonormal basis of the Hilbert space $L^2(\Sigma,\e^{-f}\,d\Vol)$, where the inner product is given by $\langle v,w\rangle=\int_{\Sigma} v\,w\,\e^{-f}\,d\Vol$.  For constants $\mu_i\in \RR$ define a function $\bF$ on the infinite dimensional space $L^2(\Sigma,\e^{-f}\,d\Vol)$ as follows:  If $w\in L^2(\Sigma,\e^{-f}\,d\Vol)$, then
\begin{equation}
	\bF(w)=\sum_i\frac{\mu_i}{2}\,\langle w,w_i\rangle^2\, .
\end{equation}
As in the finite dimensional case, the negative gradient flow of $\bF$ is:
\begin{equation}
\Psi_t(w)=\e^{-\mu_i t}\langle w_i,w\rangle\, .
\end{equation}

Of particular interest is when $\Sigma^n\subset \RR^{n+1}$ is a  shrinker, $f(x)= \frac{|x|^2}{4}$, and the basis $w_i$ are eigenfunctions with eigenvalues $\mu_i$ of a self-adjoint operator $L$ of the form
\begin{equation}
L\, w=\cL\,w+|A|^2\,w+\frac{1}{2}w\, ,
\end{equation}
where $\cL w= \Delta w -\frac{1}{2}\langle x,\nabla w\rangle$ is the drift Laplacian.
The reason this is of particular interest is because in \cite{CM1} it was shown that the Hessian of the $F$-functional is given by
\begin{equation}
\Hess_F(v,w)=-\int_{\Sigma} v\,L\,w\,\e^{-\frac{|x|^2}{4}}\, .
\end{equation}

For an $\bF$ of this form,  the negative gradient flow is equal to the heat flow of the linear heat operator $(\partial_t-L)$.  Moreover, this linear heat flow is the linearization of the rescaled MCF at the  shrinker.   It follows that the rescaled MCF near the  shrinker is approximated by the negative gradient flow of $\bF$.  This  is also reflected by fact that if we formally write down the first three terms in the Taylor expansion of $F$, then we get the value of $F$ at $\Sigma$ plus a first order polynomial which is zero since $\Sigma$ is a critical point of $F$ plus a polynomial of degree two which is given by the Hessian of $F$ and is exactly $\bF$.   This gives a heuristic explanation for the above theorem:  The dynamics of the negative gradient flow of the $F$ functional should be well approximated by the dynamics for its second order Taylor polynomial.

 \section{Dynamics at an unstable critical point}	\label{s:one}
 
 In this section, we will prove a variation on the stable manifold theorem for dynamical systems in a neighborhood of a fixed point.  This will be applied later to the rescaled MCF near a  shrinker.  In this section, we will keep things   general,  assuming a few basic properties and making no reference to MCF.

\vskip2mm
Throughout this section, $E$ is a Banach space, 
$	\Psi: E \to E$ is a continuous map with $\Psi (0)=0$, 
$
 	T: E \to E
$
 is a bounded linear map, and $Q$ is a bounded positive definite symmetric bilinear form{\footnote{When we apply this, $E$ will be the Banach space of $C^{2,\alpha}$ functions on a  shrinker and the bilinear form $Q$ will be a weighted $W^{1,2}$ norm.}} on $E$. We will use $\|x \|$ and $\|x \|_Q = \sqrt{Q(x,x)}$ to denote the $E$-norm and $Q$-norm, respectively, of $x \in E$.  Since $Q$ is bounded, there is a constant $C_Q$ so that $|Q(x,y)| \leq C_Q \,  \|x \| \, \|y\|$ 
 for all $x, y \in E$.  In particular, 
 $\|x\|^2_Q \leq C_Q \, \|x\|^2$.

\vskip2mm
We will assume that $E,\Psi, T, Q$ satisfy the following conditions:
\begin{enumerate}
\item  There is a  splitting $E=E_1  \oplus E_2$ so that:
\begin{itemize}
\item $E_1$ and $E_2$ are   $Q$-orthogonal, i.e., $Q: E_1 \times E_2 \to 0$.
\item $E_1$ and $E_2$ are  $T$-invariant, i.e., $T: E_j \to E_j$  for $j=1,2$.
\item The $Q$-orthogonal projection $P_1: E \to E_1$ is continuous.
  \end{itemize}
 \item $T$ is $Q$-continuous and there exist $\lambda > 1$ and $\mu \in (0,\lambda)$ so that:
 \begin{itemize}
\item If $x \in E_1$, then $\|T(x)\|_Q \geq \lambda \, \|x\|_Q$.
\item 	 If $x \in E_2$, then $\|T(x)\|_Q \leq \mu \, \|x\|_Q$.
\end{itemize}
\item   Given $\epsilon > 0$, there exists $r>0$ so that if $x,y \in B_r \subset E$, then
\begin{equation}
	\left\| (\Psi - T) (x) - (\Psi - T) (y) \right\|_Q \leq \epsilon \, \|x-y\|_Q \, . \notag
\end{equation}   
\end{enumerate}

\vskip2mm
{\bf{Remarks}}:
\begin{itemize}
\item Property (2) says that $T$ is strictly expanding on $E_1$  in the $Q$-norm 
and is less expanding on $E_2$; often, $T$ will actually be contracting on $E_2$.
\item Property (3) is a  local  Lipschitz bound on $(\Psi -T)$ with respect to the $Q$-norm; we will refer to this as $Q$-Lipschitz.  Essentially,
 $T$ is the linear part of the Taylor expansion of $\Psi$ at the fixed point $0$.
\item
The $Q$-Lipschitz approximation (3) is only valid on a ball in the Banach space norm.  If this could be replaced by the $Q$-norm, then we would work just with the $Q$-norm.
\end{itemize}

The next lemma shows that the assumption in (1) that $P_1 : E \to E_1$ is continuous is always satisfied when $E_1$ is finite dimensional.

\begin{Lem}	\label{e:fd}
If $\dim (E_1) < \infty$, then $P_1$ is  continuous and, thus, so is $P_2 (x) = x - P_1 (x)$.
\end{Lem}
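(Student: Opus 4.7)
The plan is to give an explicit continuous formula for $P_1$ using a basis of $E_1$ and the continuous linear functionals supplied by $Q$. The key observation is that $Q$-orthogonality of the splitting combined with the fact that $Q$ is a bounded bilinear form already pins down each coordinate of $P_1(x)$ as a bounded linear expression in $x$; finite-dimensionality of $E_1$ is used only to reduce everything to finitely many such expressions.

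Concretely, I would choose a basis $e_1,\dots,e_k$ of $E_1$ and write every $x \in E$ uniquely as $x = P_1(x) + P_2(x)$ with $P_1(x) = \sum_{i=1}^k c_i(x)\, e_i$ and $P_2(x) \in E_2$. Testing the identity $Q(x - P_1(x), e_j) = 0$ (which holds because $E_1$ and $E_2$ are $Q$-orthogonal) against each $e_j$ gives the linear system
\begin{equation}
\sum_{i=1}^k G_{ij}\, c_i(x) = Q(x, e_j), \qquad j = 1,\dots,k, \notag
\end{equation}
where $G_{ij} = Q(e_i, e_j)$. Since $Q$ is positive definite on $E$, its restriction to $E_1$ is positive definite, so the Gram matrix $G$ is invertible. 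Solving gives the explicit formula
\begin{equation}
c_i(x) = \sum_{j=1}^k (G^{-1})_{ij}\, Q(x, e_j). \notag
\end{equation}

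Because $Q$ is bounded on $E$, each linear functional $x \mapsto Q(x, e_j)$ satisfies $|Q(x, e_j)| \le C_Q \,\|e_j\|\,\|x\|$ and is therefore continuous on $E$. Consequently each coefficient $c_i : E \to \RR$ is continuous, and hence $P_1(x) = \sum_i c_i(x) \, e_i$ is continuous as a map from $E$ into the finite dimensional subspace $E_1$. The continuity of $P_2(x) = x - P_1(x)$ follows immediately.

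There is no real obstacle in this argument; the only small point to verify is invertibility of the Gram matrix $G$, which is automatic from positive definiteness of $Q$ on the finite dimensional subspace $E_1$. The same approach would fail if $E_1$ were infinite dimensional, because one could no longer package the defining conditions for $P_1(x)$ into a single finite linear system solved by bounded functionals coming from $Q$.
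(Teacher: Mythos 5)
Your proof is correct, but it takes a somewhat different route from the paper's. The paper first isolates an abstract fact (Lemma \ref{l:simple}): on a finite-dimensional subspace $E_1$, the $Q$-norm dominates the $E$-norm up to a constant $\kappa$. It then gets boundedness of $P_1$ from the three-step chain $\kappa\,\|P_1(x)\| \le \|P_1(x)\|_Q \le \|x\|_Q \le \sqrt{C_Q}\,\|x\|$, where the middle inequality uses that a $Q$-orthogonal projection is $Q$-non-expanding. You instead write down $P_1$ explicitly: pick a basis of $E_1$, use the $Q$-orthogonality condition $Q(x-P_1(x),e_j)=0$ to set up a finite Gram system, invert the Gram matrix (invertible because $Q$ is positive definite on $E_1$), and observe that each resulting coordinate functional $x\mapsto Q(x,e_j)$ is continuous since $Q$ is bounded. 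The underlying ingredients --- boundedness of $Q$, positive definiteness of $Q$ restricted to $E_1$, finite-dimensionality of $E_1$ --- are the same in both proofs, but the paper's version is a two-line estimate while yours is an explicit formula; yours has the minor advantage of showing directly what $P_1$ looks like, while the paper's is shorter because it factors the norm comparison into a reusable lemma.
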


The proof is a standard consequence of  the following simple fact:

\begin{Lem}	\label{l:simple}
Let $E$ be a Banach space and $Q$ Ê a positive definite symmetric bilinear form on $E$. ÊIf $E_1$ is a subspace of $E$ and $\dim (E_1) < \infty$, then there exists $\kappa > 0$ so that
\begin{equation}
	 \kappa \, \|x\| \leq \|x\|_Q {\text{ for all }} x \in E_1 \, .
\end{equation}
\end{Lem}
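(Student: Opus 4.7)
The plan is to reduce this to the classical fact that a continuous function on a compact set attains its minimum, using that $E_1$ is finite dimensional. First I would observe that the restriction of $Q$ to $E_1 \times E_1$ is still a positive definite symmetric bilinear form, so $\|\cdot\|_Q$ is indeed a genuine norm on the finite-dimensional subspace $E_1$. The conclusion of the lemma is then equivalent to saying that the two norms $\|\cdot\|$ and $\|\cdot\|_Q$ on $E_1$ are comparable in one direction.

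Next I would consider the unit sphere
\begin{equation}
S = \{x \in E_1 : \|x\| = 1\}.
\end{equation}
Since $E_1$ is finite dimensional, $S$ is compact in the $E$-norm by the Heine--Borel theorem applied to $E_1$ (which is linearly homeomorphic to $\RR^{\dim E_1}$). The function $x \mapsto \|x\|_Q$ is continuous with respect to the $E$-norm because the boundedness of $Q$ gives
\begin{equation}
\bigl| \|x\|_Q - \|y\|_Q \bigr| \leq \|x - y\|_Q \leq \sqrt{C_Q}\,\|x-y\|,
\end{equation}
via the triangle inequality for $\|\cdot\|_Q$ and the bound $\|z\|_Q^2 \leq C_Q\|z\|^2$ noted just before the lemma. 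Consequently $\|\cdot\|_Q$ attains its infimum on $S$; call it $\kappa$. Since $Q$ is positive definite and $0 \notin S$, this infimum is strictly positive, i.e.\ $\kappa > 0$.

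Finally, I would upgrade the inequality $\|x\|_Q \geq \kappa$ on $S$ to the full statement by homogeneity: for any nonzero $x \in E_1$, apply the inequality to $x/\|x\| \in S$ and multiply through by $\|x\|$, while the inequality is trivial when $x = 0$. There is no real obstacle here; the only thing to be careful about is to use the boundedness of $Q$ (which is part of the running hypotheses on $Q$) to get continuity of $\|\cdot\|_Q$ with respect to $\|\cdot\|$, so that compactness in the $E$-norm suffices.
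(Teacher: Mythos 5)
Your proof is correct, but it takes a genuinely different route from the paper's. You run the classical ``all norms on a finite-dimensional space are equivalent'' compactness argument: the $E$-unit sphere of $E_1$ is compact, the function $x \mapsto \|x\|_Q$ is continuous on it (which you establish via the reverse triangle inequality and the boundedness constant $C_Q$ from the running hypotheses), so it attains a strictly positive minimum $\kappa$, and homogeneity finishes. The paper instead argues constructively: it chooses a $Q$-orthonormal basis $v_1, \dots, v_n$ of $E_1$, sets $\Lambda = \max_i \|v_i\|$, and for $x = \sum x_i v_i$ chains the triangle inequality with Cauchy--Schwarz to get $\|x\| \leq \Lambda \sqrt{n}\, \|x\|_Q$, producing the explicit constant $\kappa = 1/(\Lambda\sqrt{n})$. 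The trade-offs: the paper's argument is shorter, yields an explicit $\kappa$, and does not invoke boundedness of $Q$ at all (the lemma as stated only hypothesizes that $Q$ is positive definite and symmetric, not bounded); your argument, by contrast, is the more familiar abstract template but leans on the boundedness of $Q$ to get continuity of $\|\cdot\|_Q$ in the $E$-norm, so as written it is slightly less self-contained relative to the lemma's stated hypotheses, though perfectly valid in the context of the section where $Q$ is assumed bounded.
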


\begin{proof}
Set $n = \dim (E_1)$ and let $v_1,\cdots,v_n$ be a $Q$-orthonormal basis for $E_1$ and set $\Lambda = \max_i \, \|v_i \|$. ÊIf $x = \sum_{i=1}^n x_i \, v_i $, then
\begin{equation}
	\|x\| \leq \sum_{i=1}^n \left\| x_i \, v_i \right\| \leq \Lambda \, \sum_{i=1}^n \left\| x_i Ê \right\| 
	\leq \Lambda \, \sqrt{n} \, \left( \sum_{i=1}^n Ê x_i^2 \right)^{ \frac{1}{2} } 
	= \Lambda \, \sqrt{n} \,  \|x\|_Q \, .
\end{equation}
 \end{proof}

\begin{proof}[Proof of Lemma \ref{e:fd}]
Since $P_1$ is linear, we must show it is bounded.
Given $x \in E$, we have that $P_1(x) \in E_1$ so Lemma \ref{l:simple} gives
\begin{equation}
	\kappa \, \left\| P_1(x)   \right\| \leq \left\| P_1(x)   \right\|_Q \leq \left\| x   \right\|_Q \leq \sqrt{C_Q} \, 
	\left\| x   \right\| \, , 
\end{equation}
where the second inequality used that $P_1$ is $Q$-orthogonal projection and the last inequality used that $Q$ is bounded.  
\end{proof}

It is convenient to let  
  $(x_1 , x_2) \in E_1 \oplus E_2$ be the coordinates of a point $x \in E$. 
  Let $\Psi_j$ denote $\Psi$ followed by  the $Q$-orthogonal projection $P_j$ to $E_j$.    
We will assume that $\epsilon > 0$ is small enough that
\begin{equation}	\label{e:mylambdas}
	\lambda -2\epsilon > 1 {\text{ and }}  \frac{\lambda -2\epsilon}{\mu+2\epsilon}  > 1 \, .
\end{equation}

Let $W$ be the set of points whose trajectories never leave the closed ball $\overline{B_r}$ 
\begin{equation}
	W = \{ x \in \overline{B_r} \, | \, \Psi^n (x) \in \overline{B_r} {\text{ for all }} n > 0 \} \, .
\end{equation}
Since $\Psi$ is continuous, $W$ is closed.
The next proposition shows that $W$ is  a graph over $E_2$.

\begin{Pro}	\label{l:w2}
$W$ is  the graph of a $Q$-Lipschitz mapping $u: P_2(W) \subset E_2 \to E_1$.  If $E_1$ is finite dimensional, then $u$ is Lipschitz.
\end{Pro}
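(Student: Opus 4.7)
The plan is a classical cone-field / graph-transform argument. Define the \emph{cone condition} at a pair $(x,y)\in E\times E$ to be $\|P_2(x)-P_2(y)\|_Q\le\|P_1(x)-P_1(y)\|_Q$. First I will prove a one-step cone invariance: on $\overline{B_r}$, the cone condition is preserved by $\Psi$, and moreover when it holds the $E_1$-component of the difference strictly expands by a factor $\beta>1$. Orbits of points of $W$ stay in $\overline{B_r}$ forever while having bounded $Q$-diameter, so the expanding $E_1$-part forces the \emph{strict} cone condition to fail at every pair in $W$. This gives both the graph property and the $Q$-Lipschitz bound with constant $1$; the finite-dimensional case then reduces to a norm-equivalence computation.

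\textbf{Cone invariance.} Let $x,y\in\overline{B_r}$ and write $x_j=P_j(x)$, $y_j=P_j(y)$. Decompose $\Psi(x)-\Psi(y)=T(x-y)+R$ with $R=(\Psi-T)(x)-(\Psi-T)(y)$, for which hypothesis~(3) gives $\|R\|_Q\le\epsilon\|x-y\|_Q$. Since $E_1$ and $E_2$ are $Q$-orthogonal, the $Q$-orthogonal projections satisfy $\|P_jz\|_Q\le\|z\|_Q$. Combined with the $T$-invariance of $E_1,E_2$ and the bounds in hypothesis~(2), this yields
\begin{align*}
\|P_1\Psi(x)-P_1\Psi(y)\|_Q &\ge \lambda\|x_1-y_1\|_Q-\epsilon\|x-y\|_Q,\\
\|P_2\Psi(x)-P_2\Psi(y)\|_Q &\le \mu\|x_2-y_2\|_Q+\epsilon\|x-y\|_Q.
\end{align*}
If additionally the cone condition holds at $(x,y)$, then $\|x-y\|_Q\le\|x_1-y_1\|_Q+\|x_2-y_2\|_Q\le 2\|x_1-y_1\|_Q$, and substitution gives
\begin{align*}
\|P_1\Psi(x)-P_1\Psi(y)\|_Q &\ge (\lambda-2\epsilon)\|x_1-y_1\|_Q,\\
\|P_2\Psi(x)-P_2\Psi(y)\|_Q &\le (\mu+2\epsilon)\|x_1-y_1\|_Q.
\end{align*}
By (1.2), $\lambda-2\epsilon>\mu+2\epsilon$ and $\beta:=\lambda-2\epsilon>1$, so the cone condition propagates and the $E_1$-component of the difference strictly expands by the factor $\beta$.

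\textbf{Conclusion.} Fix $x,y\in W$ and suppose, for contradiction, that $\|P_1(x)-P_1(y)\|_Q>\|P_2(x)-P_2(y)\|_Q$, so the cone condition holds strictly at $(x,y)$. Since $\Psi^n(x),\Psi^n(y)\in\overline{B_r}$ for all $n\ge0$, iterating the cone invariance step yields
\[
\|P_1\Psi^n(x)-P_1\Psi^n(y)\|_Q\ge\beta^n\|P_1(x)-P_1(y)\|_Q.
\]
The left side is bounded by $\|\Psi^n(x)-\Psi^n(y)\|_Q\le\sqrt{C_Q}\,\|\Psi^n(x)-\Psi^n(y)\|\le 2r\sqrt{C_Q}$, forcing $\|P_1(x)-P_1(y)\|_Q=0$ and contradicting the strict inequality. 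Hence $\|P_1(x)-P_1(y)\|_Q\le\|P_2(x)-P_2(y)\|_Q$ for all $x,y\in W$; in particular $P_2$ is injective on $W$, so $W$ is the graph of the well-defined map $u:P_2(W)\to E_1$ given by $u(P_2(x))=P_1(x)$, and the same inequality says $u$ is $Q$-Lipschitz with constant $1$. When $\dim E_1<\infty$, Lemma~\ref{l:simple} gives $\kappa\|v\|\le\|v\|_Q$ on $E_1$, while boundedness of $Q$ yields $\|w\|_Q\le\sqrt{C_Q}\,\|w\|$ on $E$; together these give $\|u(p)-u(q)\|\le\kappa^{-1}\sqrt{C_Q}\,\|p-q\|$, so $u$ is Lipschitz in the Banach norm. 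The only delicate point is the cone invariance step: the factor of $2$ in front of $\epsilon$ comes from the triangle inequality $\|x-y\|_Q\le\|x_1-y_1\|_Q+\|x_2-y_2\|_Q$ applied under the cone condition, and precisely matches the margin built into (1.2).
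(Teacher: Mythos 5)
Your argument is correct and follows the paper's proof essentially step for step: you inline what the paper isolates as Lemma~\ref{l:basic} (the one-step cone invariance with expansion factor $\lambda-2\epsilon$, obtained from the $T$-invariance of the splitting, $Q$-orthogonality of $P_j$, the $Q$-Lipschitz estimate (3), and the bound $\|x-y\|_Q\le 2\|x_1-y_1\|_Q$ under the cone condition), then run the same iteration-and-boundedness contradiction on $W\subset\overline{B_r}$ to derive $\|x_1-y_1\|_Q\le\|x_2-y_2\|_Q$, and finish with the identical appeal to Lemma~\ref{l:simple} in the finite-dimensional case. The only cosmetic difference is that you bound $\|P_1(\Psi^n x-\Psi^n y)\|_Q$ by $\sqrt{C_Q}\,\|\Psi^n x-\Psi^n y\|\le 2r\sqrt{C_Q}$ rather than by $C_Q\,\|\Psi^n x-\Psi^n y\|$ as the paper does, which is a harmless sharpening.
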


The idea is that if $W$ was not a graph over $E_2$, then it would contain a pair of points whose difference 
was in the expanding direction for $T$.  Since $T$ closely approximates $\Psi$, repeatedly applying $\Psi$ will 
eventually take at least one of the points out of the ball.  This argument  gives a cone condition for $W$ that implies Lipschitz regularity.

\vskip2mm
 The proof is modeled on results from \cite{HiP}   for hyperbolic diffeomorphisms.  We start with a  
 lemma that shows that a cone condition is preserved when we apply $\Psi$.

\begin{Lem}	\label{l:basic}
If $  \epsilon> 0$, $r= r(\epsilon)$ is from (3), 
$x,y \in B_r \subset E$ and	$\|x_1 - y_1\|_Q \geq \|x_2 - y_2\|_Q$, then
\begin{align}	 
	 \left\| \Psi_1 (x) - \Psi_1 (y) \right\|_Q &\geq (\lambda -2 \, \epsilon) \, \|x_1 - y_1\|_Q \geq  
	 \left\| \Psi_2 (x) - \Psi_2 (y) \right\|_Q
	    \, .   \label{e:basic1a}
\end{align}
\end{Lem}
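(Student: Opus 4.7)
The plan is to decompose $\Psi = T + (\Psi - T)$, exploit the block-diagonal behavior of $T$ with respect to the $Q$-orthogonal splitting $E = E_1 \oplus E_2$, and control the error $\Psi - T$ using the local $Q$-Lipschitz bound in property (3). The key structural observation is that because $E_1$ and $E_2$ are $Q$-orthogonal, the projection $P_j$ is $Q$-norm non-expanding: for any $v \in E$,
\begin{equation}
\|v\|_Q^2 = \|P_1 v\|_Q^2 + \|P_2 v\|_Q^2,\notag
\end{equation}
so $\|P_j v\|_Q \leq \|v\|_Q$. Moreover, since $T$ preserves $E_j$ by (1), we have $P_j(T z) = T(P_j z)$ for all $z \in E$, and in particular
\begin{equation}
P_j\bigl(T(x) - T(y)\bigr) = T(x_j - y_j).\notag
\end{equation}
Together with the hypothesis $\|x_1-y_1\|_Q \geq \|x_2-y_2\|_Q$, this gives the $Q$-orthogonal bound $\|x - y\|_Q \leq \|x_1 - y_1\|_Q + \|x_2 - y_2\|_Q \leq 2\|x_1 - y_1\|_Q$.

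Next, for the first inequality, I would write
\begin{equation}
\Psi_1(x) - \Psi_1(y) = P_1\bigl(T(x)-T(y)\bigr) + P_1\bigl((\Psi-T)(x) - (\Psi-T)(y)\bigr) = T(x_1-y_1) + P_1\bigl((\Psi-T)(x) - (\Psi-T)(y)\bigr).\notag
\end{equation}
By the reverse triangle inequality in $\|\cdot\|_Q$, the expansion bound of property (2) applied to $x_1-y_1 \in E_1$, the non-expansiveness of $P_1$, the Lipschitz estimate (3), and the comparison $\|x-y\|_Q \leq 2\|x_1-y_1\|_Q$, I obtain
\begin{equation}
\|\Psi_1(x) - \Psi_1(y)\|_Q \geq \lambda\|x_1-y_1\|_Q - \epsilon\|x-y\|_Q \geq (\lambda - 2\epsilon)\|x_1-y_1\|_Q,\notag
\end{equation}
which is the first claimed inequality.

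For the second inequality, I would apply the same decomposition to $\Psi_2$, now using the contraction bound on $E_2$ in property (2):
\begin{equation}
\|\Psi_2(x) - \Psi_2(y)\|_Q \leq \|T(x_2 - y_2)\|_Q + \epsilon\|x-y\|_Q \leq \mu\|x_2-y_2\|_Q + 2\epsilon\|x_1-y_1\|_Q \leq (\mu + 2\epsilon)\|x_1-y_1\|_Q.\notag
\end{equation}
The inequality $(\mu + 2\epsilon)\|x_1-y_1\|_Q \leq (\lambda - 2\epsilon)\|x_1-y_1\|_Q$ then follows immediately from the choice of $\epsilon$ in \eqref{e:mylambdas}, which guarantees $\lambda - 2\epsilon > \mu + 2\epsilon$.

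The main thing to be careful about is the bookkeeping with the projection $P_j$: one must use that $E_1 \perp_Q E_2$ both to justify that $P_j \circ T = T \circ P_j$ and to ensure $\|P_j\|_Q \leq 1$, so that Lipschitz control on $(\Psi - T)$ in $E$ transfers cleanly to each coordinate. Once that is in place, the argument is just the triangle inequality plus hypotheses (2) and (3); no subtle estimates are required. The lemma is essentially a cone-preservation statement that will drive the proof that $W$ is a Lipschitz graph in Proposition \ref{l:w2}.
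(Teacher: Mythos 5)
Your proof is correct and follows essentially the same route as the paper: decompose $\Psi = T + (\Psi - T)$, use $Q$-orthogonality to make each $P_j$ non-expanding and to commute $P_j$ with $T$ (the paper writes this as $T_j = P_j \circ T$), apply the expansion/contraction bounds of (2) on each block, control the error via (3), and convert $\|x-y\|_Q$ into $2\|x_1-y_1\|_Q$ using the cone hypothesis. The only cosmetic difference is that you spell out the Pythagorean identity for $P_j$ and the commutation $P_j T = T P_j$ which the paper leaves implicit.
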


\begin{proof}
Set $T_j = P_j \circ T$.   
Since $\Psi_1 = P_1 \circ \left( T + (\Psi - T) \right)$ and   $T$ and $P_1$ are linear, we have
\begin{align}	\label{e:psi1T}
	\Psi_1 (x) - \Psi_1 (y) = T_1 (x-y) + P_1 
	\left(  (\Psi - T) (x) - (\Psi - T) (y) \right) \, .
\end{align}
Since  $P_1$ does not increase the $Q$-norm, (3) gives that
\begin{align}
	\left\| P_1 
	\left(  (\Psi - T) (x) - (\Psi - T) (y) \right) \right\|_Q &\leq 
	\left\|  
	   (\Psi - T) (x) - (\Psi - T) (y)   \right\|_Q  \leq \epsilon \, \|x-y\|_Q
	    \, .
\end{align}
Using this in \eqr{e:psi1T} and using that
  $T_1$ is uniformly expanding gives
\begin{equation}
	\left\|  \Psi_1 (x) - \Psi_1 (y) \right\|_Q  \geq \lambda \, \|x_1 - y_1\|_Q - \epsilon \, \|x - y\|_Q \geq
	 (\lambda - 2 \, \epsilon) \, \|x_1 - y_1\|_Q \, ,
\end{equation}
where the last inequality used  that $\|x_1 - y_1\|_Q \geq \|x_2 - y_2 \|_Q$.  This gives the first inequality in  \eqr{e:basic1a}. 
To get the second inequality in \eqr{e:basic1a},  observe that
\begin{align}	\label{e:basic2}
	\left\|  \Psi_2 (x) - \Psi_2 (y) \right\|_Q &\leq \left| T_2 (x-y) \right\|_Q + \left\| (\Psi - T) (x) -
	(\Psi - T) (y) \right\|_Q \notag  \\
	&\leq  \mu \,  \|x_2 - y_2\|_Q + \epsilon \, \|x-y\|_Q \leq (\mu  + 2 \epsilon) \, \|x_1 - y_1\|_Q \leq
	(\lambda - 2\epsilon) \, \|x_1 - y_1\|_Q  \, ,
\end{align}
where the last inequality used \eqr{e:mylambdas}.
\end{proof}

\begin{proof}[Proof of Proposition \ref{l:w2}.]
Suppose that $x , y \in W$.  We claim that
\begin{equation}	\label{e:Qlips}
	\|x_2 - y_2\|_Q \geq \|x_1 - y_1\|_Q \, .
\end{equation}
We will prove \eqr{e:Qlips} by contradiction. If \eqr{e:Qlips} fails, then 
Lemma \ref{l:basic} gives
\begin{equation}
	  \left\| \Psi_1 (x) - \Psi_1 (y)    \right\|_Q  \geq(\lambda -2 \, \epsilon) \,
	  \|x_1 - y_1\|_Q \geq   	 \left\| \Psi_2 (x) - \Psi_2 (y) \right\|_Q \, .
\end{equation}
Note that this implies that
  Lemma \ref{l:basic} also applies to $\Psi(x)$ and $\Psi (y)$ (these remain in $B_r$ by the definition of $W$), so that we can repeatedly apply the lemma to get
\begin{equation}
	2r \, C_Q \geq C_Q \, \left\| \Psi^n (x) - \Psi^n(y) \right\| 
	\geq \left\| P_1 (\Psi^n  (x) - \Psi^n (y)  )  \right\|_Q \geq \left( \lambda -2 \, \epsilon \right)^n \, \|x_1 - y_1\|_Q \, .
\end{equation}
Since $r$ is fixed and  $\lambda -2 \, \epsilon$ is strictly greater than one, this gives a contradiction when  $n$ is sufficiently large.
Therefore, we conclude that \eqr{e:Qlips}  holds as claimed.

The first consequence of \eqr{e:Qlips} is that $W$ is a graph over $E_2$.  Namely, if $x,y \in W$ and $x_2 = y_2$, then
\eqr{e:Qlips} implies that
$x_1 = y_1$.  Define the subset $W_2 \subset E_2$ by
\begin{equation}
	W_2 = \{ P_2 (x) \, | \,   x \in W \} \, .
\end{equation}
Define a map $u: W_2 \to E_1$ by $u(x_2) = x_1$ where $(x_1 , x_2) \in W$.  It follows from \eqr{e:Qlips} that
\begin{equation}
	\left\| u(x_2) - u(y_2) \right\|_Q \leq \|x_2-y_2\|_Q 
\end{equation}
for every $x_2 , y_2 \in W$.  In other words, the mapping $u$ is $Q$-Lipschitz with norm one.

Finally, suppose that $E_1$ is finite dimensional.  Lemma \ref{l:simple} gives  $\kappa > 0$ so that if $z \in E_1$, then
$
	\kappa \, \|z\| \leq \|z \|_Q \, .
$
Therefore, if $x , y \in E_2$, then
\begin{equation}
	\kappa \, \|u(x) - u(y)\| \leq \|u(x) - u(y)\|_Q \leq |x-y|_Q \leq \sqrt{C_Q} \, \|x-y\| \, ,
\end{equation}
where the second inequality used that $u$ is $Q$-Lipschitz and the last inequality used that $Q$ is bounded.  It follows that $u$ is Lipschitz.
\end{proof}

\subsection{A group action}	\label{ss:groupO}

We will now extend the results from the previous subsection to allow for an action by a group $\cR$ on $E$.  Let $\cR_0$ be the orbit of $0$ under the $\cR$ action.
We will assume that $\cR$ has the the following properties:
\begin{enumerate}
\item[($\cR 0$)] $\cR$ commutes with $\Psi$ and is $2$-bi-Lipschitz on a neighborhood of $0$ in $E$: If $g \in \cR$ and $|x|, |y|, |g(x)|, |g(y)| < \bar{r}$ for some $\bar{r} > 0$, then 
\begin{align}
	\frac{1}{2} \, \|x-y\| \leq \|g(x) - g(y)\| \leq 2 \, \|x-y\| \, .
\end{align}
\item[($\cR 1$)] $E_1$ is transverse to   $\cR_0$:  There exists $r_0 > 0$ and a continuous strictly increasing function on $d_0:[0,r_0)$ with $d_0 (0) =0$, 
so that if $|x| < r_0$ and $\|x_2\|_Q \leq \|x_1\|_Q$, then
\begin{align}
	\dist_{E} (x , \cR_0) \geq d_0 ( \|x\|) \, .
\end{align}
\item[($\cR 2$)] To first order, $\cR$ is non-contracting on $E_1$ and non-expanding on $E_2$:  There exist $r_1 > 0$ and a continuous function $\delta_0$ on $\RR$ with $\delta_0 (0) = 0$ so that if $r\leq r_1$,  $g \in \cR$, $\|x\| , \|y\| , \|g(x)\| < \frac{ r}{3}$, then
$\|g(y)\| < r$ and
\begin{align}
	\|x_1 - y_1\|_Q - \delta_0(r) \, \|x-y\|_Q &\leq  \|(g(y) - g(x))_1\|_Q \, , \\
	 \|(g(y) - g(x))_2\|_Q &\leq   \|x_2 - y_2\|_Q +   \delta_0(r) \, \|x-y\|_Q \, .
\end{align}
\end{enumerate}

 Let $s>0$ be a small constant to be chosen and let $W_0$ be the set of points whose trajectories never leave the (closed)  $s$-tubular neighborhood of the orbit $\cR_0$ under the action of $\Psi$
\begin{equation}
	W_0 = \{ x \in E \, | \, {\text{ for all $n\geq 0$ there exists  $g_n \in \cR$ so that }} g_n(\Psi^n (x)) \in \overline{B_s}  \} \, .
\end{equation}
Since $\Psi$ and the action are continuous, $W_0$ is closed.
The next proposition shows that $W_0$ is  a graph over $E_2$.

\begin{Pro}	\label{l:w2cR}
If $s> 0$ is sufficiently small, then $B_s \cap W_0$ is  the graph of a $Q$-Lipschitz mapping $u: P_2(W_0) \subset E_2 \to E_1$.  If, in addition, $E_1$ is finite dimensional, then $u$ is Lipschitz.
\end{Pro}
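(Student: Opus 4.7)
The plan is to mimic the cone-expansion argument of Proposition \ref{l:w2}, compensating for drift along the orbit $\cR_0$ by iteratively ``recentering'' each iterate via a group element. The new ingredients are the commutativity of $\cR$ with $\Psi$ from $(\cR 0)$, which allows such recentering while retaining bi-Lipschitz control, and $(\cR 2)$, which says the recentering perturbs the $E_1 \oplus E_2$ splitting by at most $\delta_0(r)$ times the ambient $Q$-distance.  As in the proof of Proposition \ref{l:w2}, it suffices to establish the cone estimate: \emph{if $s$ is sufficiently small and $x, y \in B_s \cap W_0$, then $\|x_2 - y_2\|_Q \geq \|x_1 - y_1\|_Q$}.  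From this, the graph representation, the $Q$-Lipschitz norm-one bound, and the Lipschitz upgrade when $\dim E_1 < \infty$ follow exactly as before via Lemma \ref{l:simple}.

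To prove the cone estimate, suppose for contradiction that $x, y \in B_s \cap W_0$ with $\|x_1 - y_1\|_Q > \|x_2 - y_2\|_Q$. Inductively build sequences $\tilde{x}_n, \tilde{y}_n \in E$ and $g_n \in \cR$ by $\tilde{x}_0 = x$, $\tilde{y}_0 = y$, and
\begin{align}
    \tilde{x}_{n+1} = g_{n+1} ( \Psi( \tilde{x}_n) ), \qquad \tilde{y}_{n+1} = g_{n+1} (\Psi(\tilde{y}_n)),
\end{align}
where $g_{n+1}$ is chosen so that $\tilde{x}_{n+1} \in \overline{B_s}$. Such a $g_{n+1}$ exists because $\cR$ commutes with $\Psi$ and $x \in W_0$: writing $\tilde{x}_n = G_n(\Psi^n(x))$ for some $G_n \in \cR$, one has $\Psi(\tilde{x}_n) = G_n(\Psi^{n+1}(x))$, and we compose $G_n^{-1}$ with the group element witnessing $\Psi^{n+1}(x)$ being within $s$ of $\cR_0$.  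By $(\cR 2)$, for $s$ small enough, $\tilde{y}_{n+1}$ lies in a slightly larger ball $B_r$ where both Lemma \ref{l:basic} and the hypotheses of $(\cR 2)$ are applicable at the next step.

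Assume inductively that $\|(\tilde{x}_n - \tilde{y}_n)_1\|_Q \geq \|(\tilde{x}_n - \tilde{y}_n)_2\|_Q$. Lemma \ref{l:basic} gives the expected expansion and contraction estimates for $\Psi(\tilde{x}_n) - \Psi(\tilde{y}_n)$ in $E_1$ and $E_2$; combined with the upper bound $\|\Psi(\tilde{x}_n) - \Psi(\tilde{y}_n)\|_Q \leq C \|(\tilde{x}_n - \tilde{y}_n)_1\|_Q$ (obtained from the $Q$-continuity of $T$, property (3) for $\Psi - T$, and the current cone assumption) and followed by $(\cR 2)$ applied to $g_{n+1}$, this yields
\begin{align}
    \|(\tilde{x}_{n+1} - \tilde{y}_{n+1})_1\|_Q &\geq \left( \lambda - 2\epsilon - C' \delta_0(r) \right) \|(\tilde{x}_n - \tilde{y}_n)_1\|_Q, \\
    \|(\tilde{x}_{n+1} - \tilde{y}_{n+1})_2\|_Q &\leq \left( \mu + 2\epsilon + C' \delta_0(r) \right) \|(\tilde{x}_n - \tilde{y}_n)_1\|_Q.
\end{align}
Choosing $s$ (and hence $r$ and $\delta_0(r)$) sufficiently small, the first prefactor is strictly greater than $1$ and strictly greater than the second, so the cone is preserved at each step and $\|(\tilde{x}_n - \tilde{y}_n)_1\|_Q$ grows exponentially.

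This contradicts the uniform bound $\|(\tilde{x}_n - \tilde{y}_n)_1\|_Q \leq \|\tilde{x}_n - \tilde{y}_n\|_Q \leq \sqrt{C_Q}(\|\tilde{x}_n\| + \|\tilde{y}_n\|) \leq 2 r \sqrt{C_Q}$, completing the proof of the cone estimate. The main technical obstacle is the calibration of $s$: it must be small enough that the iterates $\tilde{y}_n$ remain inside the common ball of validity for Lemma \ref{l:basic} and for the bi-Lipschitz and splitting-preservation estimates $(\cR 0)$ and $(\cR 2)$, while still producing an expansion constant strictly greater than one after subtracting the $\delta_0(r)$ error coming from the group action.
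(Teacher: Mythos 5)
The proposal has a genuine gap: it never invokes property $(\cR 1)$, and as a result the ``uniform boundedness'' that your final contradiction relies on is not actually established. You construct $\tilde{y}_n = g_n(\Psi(\tilde{y}_{n-1}))$ with the group elements $g_n$ chosen to recenter $\tilde{x}_n$ into $\overline{B_s}$. These are \emph{not} the group elements that witness $y \in W_0$, so the membership $y \in W_0$ gives you no control over $\tilde{y}_n$. The only control you have on $\tilde{y}_n$ comes from $(\cR 2)$, which says $\|g_n(\Psi(\tilde{y}_{n-1}))\| < r$ provided $\|\Psi(\tilde{y}_{n-1})\|$ (and the other quantities) are below $r/3$ — but that requires the previous iterate to be in a ball a \emph{factor of three smaller}, so it cannot propagate by induction. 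More fundamentally, the exponential growth of $\|(\tilde{x}_n - \tilde{y}_n)_1\|_Q$ that you establish is precisely what forces $\tilde{y}_n$ to eventually leave any fixed ball (since $\tilde{x}_n$ stays in $\overline{B_s}$). So the ``contradiction with the bound $\|\tilde{y}_n\| \leq r$'' is not a contradiction with the inductive hypothesis: it just shows the iterates leave the ball, which is where the argument must actually go, not where it can stop.

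The paper's proof handles exactly this: once the cone estimate forces some first $n$ with $\|y^n\| \geq r_1$, it invokes $(\cR 1)$ to conclude that $y^n$ is at distance more than $2s$ from the orbit $\cR_0$ (choosing $s < d_0(r_1)/2$), and then uses the bi-Lipschitz property in $(\cR 0)$ to pull this back along the composite group element, giving $\dist_E(\Psi^{n-1}(y), \cR_0) > s$ and hence $y \notin W_0$. Note also that the paper only needs $x \in W_0$ and $y \in B_s$ arbitrary — the conclusion is that $y \notin W_0$, which proves the graph property directly rather than by contradiction between two points of $W_0$. Your inductive cone-preservation step via Lemma \ref{l:basic} and $(\cR 2)$ is essentially correct; what is missing is the entire mechanism converting ``the iterate of $y$ escapes'' into ``$y$ is not in $W_0$.'' Without $(\cR 1)$ there is no way to make that last step, because escaping a ball centered at $0$ says nothing a priori about distance to the orbit $\cR_0$.
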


\begin{proof}
Suppose that $x \in B_s \cap W_0$ and let $y \in B_s$ be any point with
\begin{align}	\label{e:xystart}
	\|x_2 - y_2\|_Q < \|x_1 - y_1\|_Q \, .
\end{align}
 The first part of the proposition follows if we show that $y \notin W_0$.
 
 Define sequences of points $x^i$ and $y^i$ as follows:
 \begin{itemize}
 \item Set $x^1 = x$ and $y^1 = y$.
 \item For each $i>1$, choose $g_i \in \cR$ so that $\|g_i (\Psi (x^{i-1}))\| < s$ and then set $x^i = g_i (\Psi (x^{i-1}))$ and $y^i = g_i (\Psi (y^{i-1}))$.
 \end{itemize}
 Fix some small $r_1 > 0$ (to be chosen small and then choosing $s \in (0, d_0 (r_1)$).  Repeatedly applying Lemma \ref{l:basic} and ($\cR 2$), 
 it follows that there exists $\kappa > 1$ so that
  \begin{enumerate}
 \item[(C1)] If $\|y_i \| < r_1$ for all $i <n$, then 
 \begin{equation}
	  \left\| (x^n - y^n)_1    \right\|_Q  \geq \kappa^{n-1} \,
	  \|x_1 - y_1\|_Q \geq   	 \left|\| ( x^n - y^n)_2 \right\|_Q \, .
\end{equation}
 \end{enumerate}
Since $\kappa > 1$ and the $Q$-norm is continuous, there must be 
  a first $n$ so that $r_1 \leq \|y^n\|$.  Once we have this, then ($\cR 1$) implies that 
  \begin{align}
  	\dist_E (y^n , \cR_0) \geq d_0 (r_1) > 2\, s \, .
  \end{align}
  Let $g = g_n \circ g_{n-1} \circ \dots \circ g_2$, so that $y^n = g_n (\Psi^{n-1}(y))$ by the first part of ($\cR 0$).
 Since $\cR$ preserves the orbit $\cR_0$, it follows from this and the second part of ($\cR 0$) that
   \begin{align}
  	\dist_E (\Psi^{n-1}(y) , \cR_0)  = \dist_E ( g^{-1} (y^n) , \cR_0)  \geq \frac{1}{2} \, \dist_E (y^n , \cR_0) >   s \, .
  \end{align}
  In particular, $y$ is not in $W_0$, completing the proof of the first part.

\vskip2mm
Finally, the second  claim follows as in Proposition \ref{l:w2}.
\end{proof}

\section{The dynamics of rescaled MCF}

We will apply the dynamics results from the previous section to study rescaled MCF in a neighborhood of a smooth closed embedded  shrinker $\Sigma$ that is not a round sphere.    

\subsection{The Banach space $E$, the map $\Psi$, and the norm $Q$}

The Banach space $E$ will be the H\"older space of $C^{2,\alpha}$ functions on $\Sigma$, so the $\| \cdot \|_{E}$ is the $C^{2,\alpha}$ norm.
Define the map $\Psi_t$ to be the time $t$ rescaled MCF acting on the space $E$.  Since $\Sigma$ is a fixed point of  
the rescaled MCF,   $\Psi_t (0) = 0$ for all $t$.  Set $\Psi = \Psi_1$.

We will use the second variation operator $L$ of $\Sigma$ to define the norm $Q$, the splitting $E=E_1 \oplus E_2$, and the linear map $T:E \to E$.    Recall that, by \cite{CM1}, 
\begin{align}
	\cL &=  \Delta  - \frac{1}{2} \, \langle x , \nabla \cdot \rangle \, , \\
	L &=\cL + |A|^2 + \frac{1}{2}  \, .
\end{align}
These operators are symmetric with respect to the Gaussian $L^2$ norm 
\begin{align}
	(u,v) \to \int_{\Sigma} u v \, \e^{ - \frac{|x|^2}{4} }  \, .
\end{align}
Fix a   positive constant $\Lambda > |A|^2 + \frac{3}{2}$ and define
 the bilinear form $Q$ by
\begin{equation}
	Q(u,v) = \int_{\Sigma} \left\{ \Lambda \, u \, v  -  u \, L v   \right\} \, \e^{ - \frac{|x|^2}{4} } \, .
\end{equation}
Since the weight $\e^{ - \frac{|x|^2}{4} }$ is bounded by one, $Q$ is bounded by a constant times $E$.   
Since $L$ is symmetric with the Gaussian $L^2$ norm, it follows 
that $Q$ is symmetric and, moreover, that $L$ is also symmetric with respect to   $Q$.  
Finally, observe that $Q$ is bounded above and below by the Gaussian $W^{1,2}$ norm. 
	
\subsection{The splitting and the map $T$}
	By corollary $5.15$ in \cite{CM1} (with the obvious modifications{\footnote{\cite{CM1} used the Gaussian $L^2$ norm; the extension to the $Q$ norm follows with obvious modifications.}}), theorem $5.2$ in \cite{CM1}, and theorem $4.30$ in \cite{CM1}, we have:
\begin{itemize}
\item  $L$ has a complete $Q$-orthonormal basis of eigenfunctions $w_i$ with $L \, w_i = - \mu_i \, w_i$, where the eigenvalues $\mu_i$ go to infinity.
\item  $L \, H = H$.  If $v$ is a constant vector field and $\nn$ is the unit normal, then $L \, \langle v , \nn \rangle = \frac{1}{2} \, \langle v , \nn \rangle$.  
If  $z$ is a vector field generating a rotation, then $L \, \langle z , \nn \rangle = 0$.{\footnote{A translation of $\RR^{n+1}$ is generated by a  vector   $v$ and a dilation
  is generated by the vector field $x$.  Taking the normal parts of these   gives the vector fields
$\langle v , \nn\rangle$ and $\langle x , \nn \rangle = 2 \, H$.}}
\item The lowest eigenvalue $\mu_1 < -1$ (since $\Sigma$ is not a round sphere).
\end{itemize}

Let $E_1$ be the span of the eigenspaces with eigenvalues less than $-1$, i.e.,
\begin{equation}
	E_1 =  {\text{ Span }} \{ w_i \, | \, \mu_i < -1 \} \, .
\end{equation}
Since the $\mu_i$'s go to $\infty$, we have $0 < \dim (E_1) < \infty$.  By Lemma \ref{e:fd}, $P_1$ is continuous.

Let $E_2$ be the span of the eigenspaces with eigenvalue at least $-1$, so the $Q$-orthogonality of the $w_i$'s implies that $E_1$ and $E_2$ are $Q$-orthogonal.  
The vector fields generating rotations, dilations and translations are all contained in $E_2$; this will be important later.

The linear map $T:E \to E$ is defined on the $Q$-basis $\{ w_i \}$ by
\begin{equation}	\label{e:defT}
	T \, w_i = \e^{-\mu_i} \, w_i \, .
\end{equation}
It is clear  that $T$ preserves each $E_j$ and is $Q$-bounded (since the $\mu_i$'s are bounded from below).  Property (2) also follows immediately with $\mu  = \e$ and
$\lambda = \e^{-\mu_j}$ where $\mu_j$ is the largest eigenvalue below $-1$.  To see that $T$ is bounded, observe that $T$ can alternatively be defined by $T (w)(x) = w(x,1)$ where $w(x,t)$ is the solution of the linear parabolic equation
\begin{align}
	\partial_t \, w (x,t) &= L \, w (x,t) \, ,  
\end{align}
with initial condition
		$w(x,0)  = w(x)$.
		Interior Schauder estimates for linear equations (e.g., theorem $4.9$ in \cite{L2}) then implies that
$\| w(\cdot , 1) \|_{C^{2,\alpha}} \leq C \, \|w \|_{C^0}$ and, thus,  $\|T(w)\| \leq C \, \|w \|$.

The above defines all of the objects needed for the dynamical system and we have verified all of the needed properties except for three:
\begin{itemize}
\item   $\Psi$ is defined on a neighborhood of $0$.
\item  $\Psi$ is continuous.
\item $\Psi$ satisfies the $Q$-Lipschitz approximation property (3).
\end{itemize}
These  will be proven in the next two sections.

\section{Local existence for rescaled MCF: $\Psi$ is defined near $0$}

In this section, we will look at the rescaled MCF of graphs over a fixed  smooth closed embedded  shrinker $\Sigma$.      The next lemma establishes  local existence of the rescaled MCF $\Psi_t$ for $t \leq 1$ and shows that
    it is continuous at $0$; this is well-known to experts, but the exact dependence is needed here and does not appear to be recorded in the literature.    Analogous results for graphical mean curvature flow were proven by Lieberman, \cite{L1}, and Huisken, \cite{H3}, and the results in this section follow similarly.
    
\begin{Lem}	\label{t:localF}
There exists $\delta_1$, $\epsilon$ and $\alpha  > 0$ and $C$
so that if $w \in C^{2,\alpha}$ satisfies 
\begin{equation}
 |w| + |\nabla w| \leq \delta \leq \delta_1 {\text{ and }} |\Hess_w| \leq 1 \, , 
\end{equation}
 then there is a  solution of rescaled MCF 
$u: \Sigma \times [0,1] \to \RR$ with $u(x,0) = w(x)$ and
\begin{itemize}
\item $|u|  \leq  C \, \delta$,  $|\nabla u|  \leq  C \, \sqrt{\delta}$ and $|\Hess_u| \leq C$.
\item  $\| u (\cdot , 1 ) \|_{C^{2,\alpha}} \leq C \, \delta^{\epsilon }$.
\end{itemize}
\end{Lem}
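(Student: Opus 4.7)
The plan is to convert rescaled MCF into a quasilinear parabolic scalar equation for the normal graph function $u$ over $\Sigma$, solve it by standard parabolic theory, and then quantify the solution with maximum principle and Schauder estimates.

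First I would derive the graph equation. Writing each nearby hypersurface as $\{p + u(p)\,\nn(p) : p \in \Sigma\}$, the rescaled MCF condition $x_t = \left(\tfrac{\langle x,\nn\rangle}{2} - H\right)\nn$ translates, after taking the normal component and dividing by the appropriate Jacobian factor, into a quasilinear parabolic equation of the form
\begin{equation}
  \partial_t u = a^{ij}(x, u, \nabla u)\,\nabla_i \nabla_j u + b(x, u, \nabla u),
\end{equation}
where $a^{ij}$ and $b$ are smooth functions of their arguments and $a^{ij}$ is uniformly elliptic whenever $|u| + |\nabla u|$ is small. Crucially, at $u \equiv 0$ the linearization is $\partial_t u = L u$ with the operator $L = \cL + |A|^2 + \tfrac12$ of the previous section.

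Next, I would invoke a standard short-time existence result for quasilinear parabolic equations on compact manifolds (Lieberman \cite{L1}, or the Schauder fixed-point argument sketched in Huisken \cite{H3}): for any $C^{2,\alpha}$ initial datum $w$ with $\|w\|_{C^{2,\alpha}}$ bounded, there is $\tau_0 > 0$ and a solution $u \in C^{2,\alpha}(\Sigma \times [0, \tau_0])$ with $u(\cdot,0) = w$. To extend up to $t = 1$, I would use a continuation argument based on a priori estimates: so long as $|u| + |\nabla u|$ stays so small that $a^{ij}$ remains uniformly elliptic and $|\Hess_u|$ stays bounded, the solution can be continued.

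The pointwise bounds come from the maximum principle. Because $u \equiv 0$ solves the equation and the equation is quasilinear parabolic with coefficients smooth in $(u, \nabla u)$ near $0$, standard comparison (applied to $u^2 \,\e^{-ct}$ for a suitable $c$) gives $\sup |u(\cdot,t)| \leq C\,\delta$ on $[0,1]$. The Hessian bound $|\Hess_u| \leq C$ uses the initial hypothesis $|\Hess_w| \leq 1$ together with the evolution equation for $|\Hess_u|^2$, again via maximum principle, exploiting that the lower-order corrections coming from the quasilinear structure are controlled once $|u|$ and $|\nabla u|$ are small. The gradient bound $|\nabla u| \leq C\sqrt{\delta}$ is then obtained by a Gagliardo--Nirenberg interpolation on each slice $\Sigma \times \{t\}$ between the $C^0$ bound (of order $\delta$) and the $C^2$ bound (of order one).

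Finally, the smoothing estimate $\|u(\cdot,1)\|_{C^{2,\alpha}} \leq C\,\delta^{\epsilon}$ follows from interior parabolic Schauder estimates (theorem 4.9 in \cite{L2}, applied on a time interval such as $[\tfrac12, 1]$). Since the coefficients of the linearized equation are uniformly Hölder (thanks to the $C^2$ bound and the quasilinear structure), Schauder gives
\begin{equation}
  \|u(\cdot,1)\|_{C^{2,\alpha}} \leq C\,\|u\|_{L^\infty(\Sigma \times [\tfrac12,1])} \leq C\,\delta,
\end{equation}
which is already stronger than $C\,\delta^{\epsilon}$ for small $\delta$; one can alternatively interpolate the $C^{2,\alpha}$ norm between $C^0$ (of size $\delta$) and a higher-order bound (of uniform size) to produce an explicit exponent $\epsilon \in (0,1)$.

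The main obstacle is the continuation step: keeping $|\Hess_u|$ bounded and $|\nabla u|$ small throughout $[0,1]$ so that uniform parabolicity persists. The evolution equation for $|\Hess_u|^2$ contains cubic terms in $\Hess_u$ coming from the quasilinear structure and from $|A|^2$ of $\Sigma$, and one must verify that these are controlled by the maximum principle argument once $\delta$ is small enough—this is where the smallness threshold $\delta_1$ and the constants are fixed. Once that step is in place, the rest is routine parabolic machinery.
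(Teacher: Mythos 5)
Your outline follows the same broad path as the paper — graph equation, maximum principle for the height, curvature/Hessian bound, Hölder-gradient and Schauder estimates, interpolation — but the key step you flag as ``the main obstacle'' is not, in fact, resolved by taking $\delta$ small, and this is a genuine gap. The initial hypothesis is only $|\Hess_w|\leq 1$, which is \emph{not} small, so the Riccati-type inequality $m'\lesssim m^2$ for $m=\sup|\Hess_u|^2$ (equivalently, for $\sup_{M_t}|A|^2$ under the unrescaled flow via Simons' equation, which is what the paper actually uses in Corollary \ref{e:cbounda}) gives a Hessian bound only on a fixed short time interval $[0,\epsilon_0]$ with $\epsilon_0\sim 1/4$, independent of how small $\delta$ is. The paper closes this gap by iterating (Corollary \ref{c:mainpro}): after the short interval, conclusion (C) of Proposition \ref{p:mainpro} shows $\|u(\cdot,\epsilon_0)\|_{C^{2,\alpha}}\leq C\delta^{\epsilon}$, so the \emph{new} Hessian is small and the next application of the short-time result carries the flow much further; after roughly $1/\epsilon_0$ iterations one reaches $t=1$. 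Without this regularity-gain-and-restart mechanism, your argument does not get beyond time $\epsilon_0$, because shrinking $\delta$ only controls $|u|$ and $|\nabla u|$, not the order-one initial Hessian that drives the quadratic blow-up rate.

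A secondary issue: your claim that the interior Schauder estimate directly yields $\|u(\cdot,1)\|_{C^{2,\alpha}}\leq C\|u\|_{L^\infty}\leq C\delta$ is incorrect. The Schauder bound picks up a term $\|\Omega(x,u,\nabla u)\|_{C^{\alpha'}}$ from the inhomogeneity, which is only $O(1)$ since the Hölder-gradient estimate gives $\nabla u\in C^{\alpha'}$ with norm of order one, not order $\delta$; this is why the paper only concludes $\|u\|_{C^{2,\alpha'}}\leq C$ in (B). The $\delta^{\epsilon}$ smallness comes from interpolating between the $O(1)$ bound in $C^{2,\alpha'}$ and the $O(\delta)$ bound in $C^0$, dropping to a strictly smaller Hölder exponent $\alpha<\alpha'$ — which you offer as an ``alternative'' but is actually the only route that works.
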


Lemma \ref{t:localF} verifies the first of the three remaining properties  for the dynamical system.

\vskip2mm
We will first establish uniform bounds for the solutions and, in the process, prove 
Lemma \ref{t:localF}.  We will then show that $\Psi$ is continuous.
In the next section, we will use these bounds and the finer structure of the nonlinearity to establish the    $Q$-Lipschitz approximation property.

\subsection{The graph equation for rescaled MCF}

In this subsection, we give the basic properties of graphical rescaled mean curvature flow equation
$
	\partial_t \, u = \cM \, u $.
	The next lemma shows that the graphical rescaled MCF equation is quasilinear and uniformly parabolic so long as  $|\nabla u|$   and $|u|$ are sufficiently small.  

\begin{Lem}	\label{p:quasi}
\cite{CM3}
The equation $\partial_t \, u = \cM \, u $ is the quasilinear parabolic equation 
\begin{align}
	\partial_t \, u = \Omega (x, u , \nabla u)  + \Phi_{\alpha \beta} (x, u, \nabla u) \, u_{\alpha \beta} \, ,
\end{align}
where $\Omega (x,s,y)$ and $\Phi (x,s,y)$ depend smoothly on $x,s,y$ as long as $|s|$ and $|y|$ are sufficiently small
and $\Phi_{\alpha \beta} (x, 0, 0)  = \delta_{\alpha \beta}$ is the identity matrix.   
\end{Lem}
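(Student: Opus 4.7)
The plan is to derive the graphical rescaled MCF equation by direct geometric computation. Parametrize $\Sigma_u$ by $F(p,t) = p + u(p,t)\,\nn(p)$ for $p\in\Sigma$, and work in a local orthonormal frame $\{e_\alpha\}$ on $\Sigma$. Using that $\nabla_{e_\alpha}\nn = A_{\alpha\gamma}\,e_\gamma$ on $\Sigma$ (since $|\nn|=1$), one finds
\[
F_\alpha = (\delta_{\alpha\gamma}+u\,A_{\alpha\gamma})\,e_\gamma + u_\alpha\,\nn,
\]
so the induced metric $g_{\alpha\beta}[u] = \langle F_\alpha, F_\beta\rangle$ is an explicit polynomial in $u$ and $\nabla u$ with coefficients smooth in $x$ (via $A$), and $g_{\alpha\beta}[0,0] = \delta_{\alpha\beta}$. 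Hence, for $|u|$ and $|\nabla u|$ small enough, $g_{\alpha\beta}[u]$ is uniformly invertible and its inverse $g^{\alpha\beta}[u]$ depends smoothly on $(x,u,\nabla u)$.

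Differentiating $F_\alpha$ a second time, the only contribution to $F_{\alpha\beta}$ that is linear in the Hessian of $u$ is $u_{\alpha\beta}\,\nn$; every other term is a smooth function of $(x,u,\nabla u)$. The outward unit normal $\nn_u$ to $\Sigma_u$, obtained by Gram--Schmidt against $\{F_\alpha\}$, is a smooth function of $(x,u,\nabla u)$ with $\nn_u|_{u=0,\,\nabla u=0}=\nn$; in particular it carries no dependence on the Hessian of $u$. Using the identity $A^u_{\alpha\beta} = -\langle F_{\alpha\beta},\nn_u\rangle$ for the second fundamental form of $\Sigma_u$ in the paper's convention (which follows from differentiating $\langle \nn_u, F_\beta \rangle = 0$), and contracting with $g^{\alpha\beta}[u]$, the mean curvature takes the form
\[
H_u = -\,g^{\alpha\beta}[u]\,\langle\nn,\nn_u\rangle\,u_{\alpha\beta} + R(x,u,\nabla u),
\]
where $R$ collects the remaining smooth terms (built from $\nabla_\beta e_\alpha$, $A$, $\nabla A$, contracted with $\nn_u$ and $g^{\alpha\beta}$).

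To turn this into an equation for $u$, impose the rescaled MCF condition that the normal velocity of $\Sigma_u$ equals $V := \langle F,\nn_u\rangle/2 - H_u$. Since $F_t = u_t\,\nn$ under the chosen parametrization, projecting onto $\nn_u$ gives $u_t\,\langle\nn,\nn_u\rangle = V$, and dividing by the positive factor $\langle\nn,\nn_u\rangle$ (smooth in $(x,u,\nabla u)$ and equal to $1$ at the origin) yields
\[
\partial_t u \;=\; \frac{\langle F,\nn_u\rangle}{2\,\langle\nn,\nn_u\rangle} \;-\; \frac{H_u}{\langle\nn,\nn_u\rangle} \;=\; \Omega(x,u,\nabla u) + \Phi_{\alpha\beta}(x,u,\nabla u)\,u_{\alpha\beta},
\]
with $\Omega$ absorbing all terms free of the Hessian of $u$, and $\Phi_{\alpha\beta} = g^{\alpha\beta}[u]$ (the two factors of $\langle\nn,\nn_u\rangle$ cancel). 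Evaluating at the origin, $\Phi_{\alpha\beta}(x,0,0) = \delta_{\alpha\beta}$, as required, and this is also consistent with the linearization at $\Sigma$ being $L = \cL + |A|^2 + 1/2$ recorded earlier.

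The main obstacle is purely organizational: isolating the $u_{\alpha\beta}$ contribution cleanly by tracking how $g^{\alpha\beta}[u]$, $\langle\nn,\nn_u\rangle$, and $\nn_u$ combine, and checking that none of these three factors hides a Hessian dependence. Once these smooth-dependence statements are verified term by term, the form of the equation and the normalization $\Phi_{\alpha\beta}(x,0,0)=\delta_{\alpha\beta}$ follow automatically; no analysis beyond the implicit function theorem (to invert $g_{\alpha\beta}[u]$ and to produce $\nn_u$) is needed.
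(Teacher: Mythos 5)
Your proof is correct. The parametrization $F(p,t)=p+u(p,t)\,\nn(p)$ gives $F_\alpha=(\delta_{\alpha\gamma}+u\,A_{\alpha\gamma})e_\gamma+u_\alpha\nn$, the normal $\nn_u$ and the induced metric $g_{\alpha\beta}[u]$ depend only on $(x,u,\nabla u)$, the sole Hessian contribution to $F_{\alpha\beta}$ is $u_{\alpha\beta}\nn$ (covariant Hessian, with Christoffel corrections absorbed into the smooth lower-order part), and the tangential reparametrization $u_t\,\langle\nn,\nn_u\rangle=\tfrac12\langle F,\nn_u\rangle-H_u$ together with the cancellation of the two $\langle\nn,\nn_u\rangle$ factors yields $\Phi_{\alpha\beta}=g^{\alpha\beta}[u]$, which is $\delta_{\alpha\beta}$ at the origin.

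Your route differs from the paper's in an instructive way. The paper does not reprove this lemma; it cites \cite{CM3}, and the machinery it actually uses (Lemma~\ref{l:areau}, Corollary~\ref{c:graphue}, Lemma~\ref{l:normpart1} in the appendix) expresses everything through the three scalar quantities $w$ (speed), $\nu$ (relative area element), and $\eta$ (support function), each a smooth function of $(p,s,y)$, and writes $H_u=\frac{w}{\nu}\left[\partial_s\nu-\dv_\Sigma(\partial_{y_\alpha}\nu)\right]$, a divergence-form expression coming from the first variation of area. There the Hessian coefficient appears as $\Phi_{\alpha\beta}=\frac{w^2}{\nu}\,\partial_{y_\alpha}\partial_{y_\beta}\nu$, whereas you obtain $\Phi_{\alpha\beta}=g^{\alpha\beta}[u]$ by directly contracting the second fundamental form; these agree (at $s=0$ both reduce to $\delta_{\alpha\beta}-y_\alpha y_\beta/(1+|y|^2)$, and in general they match as symmetric matrices). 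Your computation is more self-contained and makes the geometric source of the Hessian term transparent; the paper's $w,\nu,\eta$ packaging is tailored to the later Taylor-coefficient bookkeeping in Proposition~\ref{p:formofQ}, where having named scalars with tabulated first and second derivatives at $(p,0,0)$ pays off. Either approach establishes the lemma.
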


The next lemma writes the graphical rescaled MCF equation as a perturbation of the linearized equation.  The   nonlinearity $\cQ(u)$ is  essentially quadratic, so   $\cQ(u) - \cQ(v)$ is   bounded by $C_{u,v} \, (u-v)$ where   $C_{u,v}$ is small when $u$ and $v$ are.

\begin{Lem}	\label{t:nonlinQ}
We have $\cM \, u = L \, u + \cQ(u)$, where 
the nonlinearity $\cQ$ satisfies
\begin{enumerate}
\item[(Q)] There exist $C$ and $\epsilon > 0$ so that if   $\| u \|_{C^1} \leq \epsilon $ and  $\| v \|_{C^1} \leq \epsilon$, then  
\begin{align*}
	\cQ(u)  - \cQ(v)  = f   + \dv_{\Sigma} (W ) + \langle \nabla \bar{h}_u , V \rangle 
		+ \langle \nabla \left( h + H \, (u-v) \right), \bar{V}_v \rangle 	\, ,
\end{align*}
where $f$, $h$ and $\bar{h}_u$ are smooth functions, $H$ is the mean curvature of $\Sigma$,
 and  $V$, $\bar{V}_v$ and $W$  are smooth vector fields satisfying:
\begin{align}
	|f|,  |h|, | {W} |  &\leq C\, \left( \| u \|_{C^1} + \| v \|_{C^1} \right) \, \left( |u-v| + |\nabla u - \nabla v| \right) \, , \\
	   | V |  &\leq C\,  \left( |u-v| + |\nabla u - \nabla v| \right) \, , \\
	|\bar{h}_u|,   | \bar{V}_v |  &\leq C\, \left( \| u \|_{C^1} + \| v \|_{C^1} \right) \, , \\
 	\left| \nabla \bar{h}_u \right| &\leq C \, \| u \|_{C^1}  \, \left( 1 + \left| \Hess_u \right| \right) \, ,
	\\
	 |  \dv_{\Sigma} (\bar{V}_{v})| &\leq C\,  \| v \|_{C^2} \, .
\end{align}
\end{enumerate}
Finally, we have 
\begin{align}	\label{e:lastone35}
	 |\dv_{\Sigma} (W) | &\leq C\, \left( \| u \|_{C^1} + \| v \|_{C^1} \right) \, \left( |u-v| + |\nabla u - \nabla v| + |\Hess_u - \Hess_v| \right) \notag  \\
	 &\quad + C\, \left|\Hess_u \right| \, \left( |u-v| + |\nabla u - \nabla v| \right)\, ,   \\
	|\nabla h|   &\leq C(1 + |\Hess_u|) \,  \left( |u-v| + |\nabla u - \nabla v| \right)  + C \, (|u| + |\nabla u| ) \,  |\Hess_u - \Hess_v| \, . \label{e:lastone35b}
\end{align}
\end{Lem}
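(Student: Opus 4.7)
The plan is to compute $\cM u$ starting from its quasilinear form in Lemma~\ref{p:quasi}, isolate the linearization $Lu$, and then decompose the residual $\cQ(u)-\cQ(v)$ by a fundamental-theorem-of-calculus argument combined with a single integration by parts that converts the second-derivative pieces into divergence form. By Lemma~\ref{p:quasi}, $\cM u = \Omega(x,u,\nabla u) + \Phi_{\alpha\beta}(x,u,\nabla u)\,u_{\alpha\beta}$ with $\Phi_{\alpha\beta}(x,0,0)=\delta_{\alpha\beta}$. Taylor expanding $\Omega$ and $\Phi$ at $(u,\nabla u)=0$ and matching against the formula $L=\cL+|A|^2+\tfrac12$ from \cite{CM1}, the linear-in-$(u,\nabla u,\Hess_u)$ part is exactly $Lu$, so $\cQ(u):=\cM u - Lu$ is at least quadratic: a polynomial in $(u,\nabla u)$ with no Hessian, plus a term of the form $A^{\alpha\beta}(x,u,\nabla u)\,u_{\alpha\beta}$ with $A^{\alpha\beta}$ vanishing at $(u,\nabla u)=0$.

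To handle $\cQ(u)-\cQ(v)$, write it as $\int_0^1 \frac{d}{ds}\cQ(v+s(u-v))\,ds$. This produces an expression linear in $u-v$ whose coefficients depend smoothly on $u,v,\nabla u,\nabla v$ and linearly on the Hessian along the path. The terms carrying no second derivative of $u-v$ collect into $f$ together with the purely first-order pieces of $h$ and $\bar h_u$. The terms with a second derivative of $u-v$ have the form $A^{\alpha\beta}(u-v)_{\alpha\beta}$ with $A^{\alpha\beta}$ small, and the key identity
\begin{equation}
A^{\alpha\beta}\,(u-v)_{\alpha\beta} = \dv_{\Sigma}\!\bigl(A^{\alpha\beta}\,\nabla_\alpha(u-v)\,e_\beta\bigr) - \bigl(\nabla_\alpha A^{\alpha\beta}\bigr)\,\nabla_\beta(u-v)
\end{equation}
exposes the divergence-form piece defining $W$ and a first-order residue in which the only remaining Hessian comes from differentiating $A^{\alpha\beta}$. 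Hessian-of-$u$ contributions from this differentiation are placed into $\nabla h$ (yielding the refined bound \eqref{e:lastone35b}), and Hessian-of-$v$ contributions are placed into $\bar V_v$, which accounts for the bound $|\dv_\Sigma \bar V_v|\leq C\|v\|_{C^2}$.

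The apparent oddity that $H(u-v)$ appears in $\langle\nabla(h + H(u-v)),\bar V_v\rangle$ without a smallness prefactor is explained by expanding the shrinker drift $\tfrac{1}{2}\langle x+u\nn,\nn_u\rangle$ on $\Sigma_u$ and collecting the piece that uses $H=\tfrac12\langle x,\nn\rangle$ together with the order-$\nabla u$ correction in $\nn_u-\nn$; grouping this distinguished $H$-term inside a gradient paired with the small factor $\bar V_v$ lets us preserve the $C^1$-smallness of $h$ itself while still allowing $\nabla(H(u-v))$ to appear, and the shrinker identity is what makes this pairing natural. Once the allocation is fixed, the pointwise bounds follow by counting the factors of $u,v,u-v,\nabla u,\nabla v,\Hess_u,\Hess_v$ produced by differentiating the smooth functions $\Omega,\Phi$ and their path averages. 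The main difficulty is not any single estimate but the bookkeeping: each product of derivatives must be allocated among the six objects $f,W,V,\bar h_u,h,\bar V_v$ so that all bounds -- including the refined \eqref{e:lastone35}--\eqref{e:lastone35b}, which permit one $\Hess_u$ or $\Hess_v$ factor but only when compensated by a specific difference or smallness factor -- are simultaneously satisfied.
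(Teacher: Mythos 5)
Your plan diverges from the paper's proof in a way that creates a real gap. The paper's proof is built on a structural fact that your argument never uses: the mean curvature of a graph over $\Sigma$ has a built-in divergence form, namely $H_u = \tfrac{w}{\nu}\bigl[\partial_s\nu - \dv_{\Sigma}(\partial_{y_\alpha}\nu)\bigr]$ (Corollary~\ref{c:graphue}, from \cite{CM3}). Using this, the paper first proves a decomposition of the \emph{single-function} nonlinearity,
\begin{equation*}
\cQ(u) = \bar f(p,u,\nabla u) + \dv_{\Sigma}\bigl(\bar W(p,u,\nabla u)\bigr) + \bigl\langle \nabla \bar h(p,u,\nabla u),\ \bar V(p,u,\nabla u)\bigr\rangle \, ,
\end{equation*}
with explicit $\bar W = \tfrac{w^2}{\nu}\partial_{y_\alpha}\nu - y_\alpha$, $\bar h = 1 - \tfrac{w^2}{\nu}$, $\bar V = \partial_{y_\alpha}\nu$, and it verifies all the first-order vanishing (P1)--(P4) by direct computation using Lemma~\ref{l:areau}. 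The difference $\cQ(u)-\cQ(v)$ is then formed by subtracting two copies of this single-$u$ decomposition and splitting $\langle\nabla\bar h_u,\bar V_u\rangle-\langle\nabla\bar h_v,\bar V_v\rangle$ by the product rule. That is exactly why $\bar h_u$ is a function of $u$ alone and $\bar V_v$ a function of $v$ alone, which is what makes the bounds $|\nabla\bar h_u|\lesssim\|u\|_{C^1}(1+|\Hess_u|)$ and $|\dv_\Sigma\bar V_v|\lesssim\|v\|_{C^2}$ possible -- each of these sees a Hessian of only one of the two functions.

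Your approach -- FTC on $\cQ$ along the segment $v+s(u-v)$ followed by a pointwise integration by parts on $A^{\alpha\beta}(u-v)_{\alpha\beta}$ -- produces path-averaged coefficients $A^{\alpha\beta}=\int_0^1(\Phi_{\alpha\beta}-\delta_{\alpha\beta})(v+s(u-v),\cdot)\,ds$ that depend on both $u$ and $v$. When you take $\nabla_\beta A^{\alpha\beta}$ in the IBP residue, the chain rule brings down the Hessian of the path, $\Hess_v + s\,\Hess_{u-v}$, multiplied by $\partial_{y_\gamma}A^{\alpha\beta}$, which need not be small at the origin. The resulting term $\Hess_v\cdot\nabla(u-v)$ with an $O(1)$ coefficient has no natural home among the six objects: $V$ is not allowed any Hessians, $\nabla\bar h_u$ may carry $\Hess_u$ but not $\Hess_v$, $\nabla h$ may carry $\Hess_u$ and $\Hess_u-\Hess_v$ but only with the specific prefactors of \eqref{e:lastone35b}, and $\bar V_v$ itself must be $O(\delta)$ with Hessians relegated to its divergence, not its value. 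You acknowledge ``the main difficulty is\ldots the bookkeeping'' and assert it works out, but this is precisely the step that fails under your route and is handled in the paper only because the per-function decomposition makes the $u$-only versus $v$-only separation automatic. In short: Proposition~\ref{p:formofQ} (the single-$u$ decomposition driven by the $H_u$ formula) is the missing ingredient, and without it the allocation you describe does not go through.

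A smaller point: your heuristic for the $H(u-v)$ term (expanding the shrinker drift and collecting an $H$ from $\tfrac12\langle x,\nn\rangle$) is not the mechanism at work. In the paper, $H$ appears because $\partial_s\bar h(p,0,0)=H(p)$, so $H(u-v)$ is exactly the linear term in $\bar h_u - \bar h_v$; it is subtracted from $h:=\bar h_u-\bar h_v - H(u-v)$ to make $h$ quadratically small, and then added back inside the gradient paired with the already-small $\bar V_v$. This is a first-order Taylor cancellation, not a consequence of the shrinker equation.
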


\vskip2mm
 Lemma \ref{t:nonlinQ} will be proven in an appendix.

\subsection{Local existence for the graph equation}

We are now prepared to prove Lemma \ref{t:localF}.  Using Lemma \ref{p:quasi}, we can write the equation as a quasilinear parabolic equation.   The argument follows the approach for graphical MCF in \cite{H3}, \cite{L1} with three steps:
\begin{itemize}
\item Bound $|u|$ and $|\nabla u|$ so that the equation becomes uniformly parabolic.
\item Use the $C^{\alpha}$ estimate on $\nabla u$ for uniformly parabolic quasilinear equations.
\item Appeal to the   Schauder estimates for linear equations.
\end{itemize}

Short time existence follows from standard arguments, but it also follows directly from short time existence for  MCF   together with the  relationship between MCF and rescaled MCF.  The point   is to obtain uniform estimates along the flow.
The first  step is  to establish uniform estimates for the height of the graph.  This is  done in the next  lemma  for a solution
$
 	u: \Sigma \times [0,\epsilon] \to \RR
$
  of the graphical rescaled MCF equation, where $\epsilon \in (0,1]$.

\begin{Lem}	\label{l:heightest}
There exist  $C$ and $\delta > 0$ so that if   $\sup_{\Sigma} |u(\cdot , 0)| \leq \delta$, then
\begin{equation}	\label{e:height}
	\sup_{\Sigma \times [0,\epsilon]} \,   |u| \leq C \, \sup_{\Sigma} \, |u(\cdot , 0)| \, .
\end{equation}
\end{Lem}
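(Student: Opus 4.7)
The plan is a parabolic maximum principle argument applied to the quasilinear form of the equation from Lemma \ref{p:quasi}: $\partial_t u = \Omega(x, u, \nabla u) + \Phi_{\alpha \beta}(x, u, \nabla u)\, u_{\alpha \beta}$, with $\Phi_{\alpha \beta}(x, 0, 0) = \delta_{\alpha \beta}$ and both coefficients smooth for small arguments. The single observation driving the estimate is that $\Sigma$ is itself a shrinker, so $u \equiv 0$ is a stationary solution of the rescaled MCF; hence $\Omega(x, 0, 0) = 0$ pointwise, and smoothness of $\Omega$ in its arguments yields $|\Omega(x, s, 0)| \leq C|s|$ whenever $|s|$ is small.

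Set $f(t) := \max_{x \in \Sigma} u(x, t)$, which is Lipschitz on $[0, \epsilon]$ since $\Sigma$ is compact and $u$ is smooth. At a point $(x_t, t)$ realizing the spatial maximum one has $\nabla u(x_t, t) = 0$ and $\Hess_u(x_t, t) \leq 0$; provided $|u|$ and $|\nabla u|$ are small enough that $\Phi_{\alpha \beta}(x, u, \nabla u)$ remains positive definite, the second-order contribution $\Phi_{\alpha \beta}(x_t, u, 0)\, u_{\alpha \beta}(x_t, t)$ is nonpositive. Combined with the zeroth-order bound, the Dini derivative satisfies $f'(t) \leq C f(t)$ whenever $f(t) > 0$. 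The symmetric argument at a minimum gives the corresponding reverse inequality for $g(t) := \min_x u(x, t)$. Gronwall then yields $\sup_{\Sigma} |u(\cdot, t)| \leq e^{Ct}\, \sup_{\Sigma} |u(\cdot, 0)|$, and since $t \leq \epsilon \leq 1$ this is exactly \eqref{e:height} with constant $e^{C}$.

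The main (minor) obstacle is closing a continuity argument guaranteeing that $|u|$ and $|\nabla u|$ remain in the regime $\leq \delta_1$ on which Lemma \ref{p:quasi} applies and $\Phi_{\alpha \beta}$ is uniformly elliptic; without this, both the positivity of $\Phi$ at interior extrema and the Taylor bound on $\Omega$ could fail. I would handle it by setting $T^* := \sup\{ t \leq \epsilon : \sup_{\Sigma \times [0, t]} (|u| + |\nabla u|) \leq \delta_1 \}$: the height estimate above controls $|u|$ on $[0, T^*]$ by $e^{C} \delta$, while interior $C^1$ estimates for uniformly parabolic quasilinear equations, combined with the initial smallness of $\nabla w$, bound $|\nabla u|$ in terms of the initial data. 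Shrinking $\delta$ forces both quantities to remain strictly below $\delta_1$ up to time $\epsilon$, so $T^* = \epsilon$ and the maximum-principle bound applies on the full interval, yielding \eqref{e:height}.
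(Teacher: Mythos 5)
Your argument follows the same maximum-principle strategy as the paper's proof, and the core computation at the spatial maximum is right, but you have introduced an unnecessary and slightly confused step. You correctly observe that at a point realizing $\max_x u(x,t)$ one has $\nabla u = 0$, so the coefficients are evaluated at $(x_t, u, 0)$. This means the only smallness you need is of $|u(x_t,t)|$ itself: $\Phi_{\alpha\beta}(p,s,0)$ is positive definite and $|\Omega(p,s,0)| \leq C_1 s$ whenever $|s| \leq \delta_1$, with no condition on $\nabla u$ at all. You then contradict your own observation by asserting that ``$|u|$ and $|\nabla u|$'' must stay below $\delta_1$ and invoking interior $C^1$ estimates to close a continuation argument in $\nabla u$. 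That step is not needed, and it is awkward in the paper's logical ordering: the gradient bound is \emph{derived} from this height estimate (via the curvature bound of Corollary \ref{e:cbounda} and interpolation in Proposition \ref{p:mainpro}), so reaching for parabolic $C^1$ estimates here is both circular in spirit and overkill. The paper closes the continuation purely in $|u|$, by choosing $\delta = \tfrac{1}{2}\e^{-2C_1}\delta_1$ and running a contradiction argument with the auxiliary function $v = \e^{-(C_1+\kappa)t}u$ to show the maximum never reaches $\delta_1$. Your Gronwall-plus-Dini-derivative formulation of the ODE step is a valid alternative to that auxiliary-function argument; what should be dropped is the appeal to gradient estimates and the $T^*$ defined in terms of $|u|+|\nabla u|$. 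Define $T^*$ in terms of $|u|$ alone and the proof closes cleanly.
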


\begin{proof}
We first   bound  the positive part of the maximum of $u$.
Given  $t$, choose $p\in \Sigma$ with 
\begin{equation}
	u(p,t) = \max_{x} u(x,t) \, .
\end{equation}
We may assume that $u(p,t) > 0$ since we are otherwise done.
By the first derivative test,  $\nabla u(p,t) = 0$.  The second derivative test gives that $u_{\alpha \beta}(p,t)$ is negative semi-definite.  

By Lemma \ref{p:quasi},  $\Omega(p,0,0)=0$, $\Phi_{\alpha \beta}(p,0,0) = \delta_{\alpha \beta}$, and both $\Omega$ and $\Phi_{\alpha \beta}$ are smooth as long as $|u|$ is sufficiently small.  In particular, there exist $\delta_1 > 0$ and $C_1 > 0$ so that
if $s \leq \delta_1$, then
\begin{itemize}
\item $ \Phi_{\alpha \beta}(p,s,0)$ is positive definite and 
  $\left| \Omega (p,s,0)\right| \leq C_1 \, s$.
\end{itemize}
In particular, as long as the maximum $u(p,t)$ is at most $ \delta_1$, then we have that
\begin{equation}	\label{e:ttrt}
	\partial_t \, u (p,t)= \Omega (p, u(p,t) , 0)  + \Phi_{\alpha \beta} (p, u(p,t), 0) \, u_{\alpha \beta}(p,t) 
	\leq C_1 \, u(p,t) \, .
\end{equation}
 From this, it will follow  for $x \in \Sigma$ and $t \in [0,\epsilon]$  that
 \begin{equation}	\label{e:wantit}
 	u(x,t) \leq \e^{C_1 \, t} \, \max_{x} |u|(x,0)
\end{equation}
 as long as
$\sup_{x} u(x,0) \leq \delta \equiv \frac{1}{2} \, \e^{-2\,C_1} \, \delta_1$.

We will prove \eqr{e:wantit} by contradiction, so suppose that there exists $\bar{t} \in (0,\epsilon]$ so that \eqr{e:wantit} fails 
 at time $\bar{t}$.  In particular, we can choose   $\kappa>0$ (but less than $\min \{ C_1 , 1\} $) so that
\begin{equation}
	\e^{- (C_1 + \kappa) \, \bar{t}}  \, \max_{x} u(x,\bar{t}) > \kappa \, \delta_1 + \max_{x} u(x,0) \, .
\end{equation}
We will get a contradiction from this.
Define an auxiliary function 
\begin{equation}
	v (x,t) = \e^{- ( C_1+ \kappa) \, t} u(x,t)  \, .
\end{equation}
It follows that $v(x,0) \leq  \max_{x} |u(x,0)|$ and $\max_x v(x,\bar{t}) > \kappa \, \delta +  \max_{x} |u(x,0)|$. 
Let $T < \bar{t}$ be the smallest time that the maximum of $v$ on $\Sigma \times [0,T]$ is at least 
$\kappa \, \delta + \max_{x} |u|(x,0)$.  It follows that the maximum of $v$ on $\Sigma \times [0,T]$ occurs at a point $(p,T)$.
 Since this is the first time, we have $v(p,T) = \kappa \, \delta + \max_{x}| u|(x,0) \leq 2 \, \delta $ and, thus, 
 \begin{equation}	\label{e:gotttr}
 	u(p,T) = \e^{ (C_1 + \kappa) \,T} \,  v(p,T) \leq  2 \, \delta \,  \e^{ (C_1 + \kappa) \,T} \leq \delta_1 \, .
 \end{equation}
  By the first derivative test in time, we have
\begin{equation}
	0 \leq \e^{ (C_1 + \kappa) \,T} \,   \partial_t v (p,T) 
	=   \partial_t u (p,T) - (C_1 + \kappa) \, u (p,T) \, .
\end{equation} 
 However, \eqr{e:gotttr} allows us to apply \eqr{e:ttrt} at $(p,T)$, giving the desired contradiction.  Thus, we get that at each point 
 $x\in \Sigma$ and time $t\in [0,\epsilon]$ we have
 \begin{equation}
 	\max \{ 0 , u(x,t) \}  \leq \e^{C_1 \, t} \,  \max_{x}| u|(x,0) \, .
 \end{equation}
 The bound for the negative part follows by the same argument, but with the inequality on the $\partial_t$ derivative and on the Hessian of $u$ reversed.
 \end{proof}

 We will need the following standard maximum principle argument:
 
\begin{Lem}	\label{l:maxp}
If $f :  M_t \times [0 , T] \to [0 , \infty)$ satisfies $\max_{M_0} f   \leq C$ and
\begin{align}
	\left( \partial_t - \Delta_{M_t} \right) \, f \leq 2\,  f^2 \, ,
\end{align}
  then 
 $\max_{M_t}  f   \leq 2\, C$ for $t \leq \frac{1}{4C} $.
\end{Lem}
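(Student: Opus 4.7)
The plan is an ODE comparison argument via the parabolic maximum principle. The natural barrier is the solution $\phi(t) = C/(1-2Ct)$ of the ODE $\phi' = 2\phi^2$ with $\phi(0) = C$, which is finite on $[0, 1/(2C))$ and satisfies $\phi(t) \le 2C$ throughout $[0, 1/(4C)]$. Because the hypothesis gives only an inequality and $\phi$ gives an equality, I will first perturb to obtain a strict supersolution so that the standard first-touching argument closes.

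For small $\epsilon > 0$, let $\psi_\epsilon$ denote the solution of the ODE $\psi' = 2\psi^2 + \epsilon$ with $\psi(0) = C + \epsilon$. By continuous dependence on initial data and parameters, $\psi_\epsilon$ exists on all of $[0, 1/(4C)]$ for every sufficiently small $\epsilon$, and $\psi_\epsilon \to \phi$ uniformly on that interval as $\epsilon \to 0$. I then claim $f(x,t) < \psi_\epsilon(t)$ on $M_t \times [0, 1/(4C)]$. At $t = 0$ this is immediate from $\max_{M_0} f \le C < C + \epsilon = \psi_\epsilon(0)$. If the claim fails, continuity of $f$ together with compactness of each $M_t$ (implicit in the setting of closed hypersurfaces under rescaled MCF) produces a first time $t_0 \in (0, 1/(4C)]$ and a point $x_0 \in M_{t_0}$ at which $f(x_0, t_0) = \psi_\epsilon(t_0)$. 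At such a first-touching point the usual derivative tests give $\partial_t f(x_0, t_0) \ge \psi_\epsilon'(t_0) = 2\psi_\epsilon(t_0)^2 + \epsilon$ and $\Delta_{M_{t_0}} f(x_0, t_0) \le 0$. Substituting into the evolution inequality yields
\[
    2\psi_\epsilon(t_0)^2 + \epsilon \;\le\; \bigl(\partial_t - \Delta_{M_{t_0}}\bigr) f(x_0, t_0) \;\le\; 2 f(x_0,t_0)^2 \;=\; 2\psi_\epsilon(t_0)^2,
\]
contradicting $\epsilon > 0$.

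Sending $\epsilon \to 0$ gives $f(x,t) \le \phi(t) = C/(1 - 2Ct)$ on $M_t \times [0, 1/(4C)]$, and since $\phi(t) \le 2C$ for $t \le 1/(4C)$, the lemma follows. The only mildly delicate point is the implicit use of compactness of $M_{t_0}$ to guarantee that the spatial maximum is attained at an interior point of $M_{t_0}$; beyond that, the argument is a routine first-touching maximum principle comparison and I do not anticipate any real obstacle.
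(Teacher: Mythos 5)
Your proof is correct and follows essentially the same route as the paper: an ODE comparison against $\phi(t)=C/(1-2Ct)$, made rigorous by the parabolic maximum principle. The paper phrases the comparison by working with the running maximum $m(t)=\max_{s\le t}\max_{M_s} f$, asserting $m'(t)\le 2m^2(t)$ ``by a standard argument'' and integrating $(1/m)'\ge -2$, whereas you make that standard argument explicit via a strictly supersolution barrier $\psi_\epsilon$ and a first-touching point; the content is the same.
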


\begin{proof}
Define $m(t) = \max \{ w (x,s) \, | \, s \leq t \}$.  A standard argument shows that 
  $m(t)$ satisfies the differential equality $m'(t) \leq 2\, m^2 (t)$.  In particular, 
\begin{align}
	\left( \frac{1}{m(t)} \right)' = \frac{ - m'(t)}{m^2(t)} \geq - 2 \, .
\end{align}
Since $m(0) \leq C$, integrating this gives that $\frac{1}{m(t)} - \frac{1}{m(0)} \geq -2\,t$ and, thus, 
$
	m(t) \leq \frac{1}{\frac{1}{C} - 2 t} $.   It follows that $m(t) \leq 2 \, C$ as long as $t \leq\frac{1}{4C}$.
\end{proof}

We next apply this to get a short-time uniform curvature estimate for MCF.

\begin{Cor}	\label{e:cbounda}
If $M_t$ is a MCF with $\sup_{M_0} |A|^2 \leq C$, 
 then $\sup_{M_t} |A|^2 \leq 2 \, C$ for $t \leq\frac{1}{4C}$.
\end{Cor}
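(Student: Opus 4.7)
The plan is to apply the maximum principle lemma (Lemma \ref{l:maxp}) directly to the function $f = |A|^2$ on the evolving hypersurfaces $M_t$. The key input is the standard evolution equation for the squared norm of the second fundamental form under mean curvature flow, namely
\begin{equation}
  \left( \partial_t - \Delta_{M_t} \right) |A|^2 = -2\, |\nabla A|^2 + 2\, |A|^4 \, . \notag
\end{equation}
Since $-2\,|\nabla A|^2 \leq 0$, this immediately gives the differential inequality
\begin{equation}
  \left( \partial_t - \Delta_{M_t} \right) |A|^2 \leq 2\, \left( |A|^2 \right)^2 \, , \notag
\end{equation}
which is exactly the hypothesis required by Lemma \ref{l:maxp} with $f = |A|^2$.

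Given the bound $\sup_{M_0} |A|^2 \leq C$, Lemma \ref{l:maxp} then yields $\sup_{M_t} |A|^2 \leq 2\,C$ for all $t \leq \frac{1}{4C}$, which is the claim. The only thing to verify is that the hypothesis $f \geq 0$ is satisfied, which is automatic since $f = |A|^2$.

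The main (and essentially only) step is to recall and apply the Simons-type evolution identity for $|A|^2$ along MCF; this is classical and can be cited from Huisken's original work. There is no serious obstacle here — the work has all been done in Lemma \ref{l:maxp}, which was set up precisely so that the curvature estimate follows as a two-line corollary. One minor point to be careful about is that Lemma \ref{l:maxp} is stated on a time-dependent domain $M_t \times [0,T]$, so implicitly one uses that $|A|^2$ attains its maximum on each $M_t$ (which holds since $M_t$ remains smooth and closed on the short time interval under consideration), and that the standard reduction to an ODE for the running maximum $m(t) = \max_{s \leq t} \max_{M_s} |A|^2$ goes through in the parabolic setting on a moving hypersurface.
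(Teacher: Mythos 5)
Your proof is correct and matches the paper's argument exactly: cite Simons' evolution equation $\left(\partial_t - \Delta_{M_t}\right)|A|^2 = -2|\nabla A|^2 + 2|A|^4$, drop the good term to get $\left(\partial_t - \Delta_{M_t}\right)|A|^2 \leq 2\left(|A|^2\right)^2$, and apply Lemma \ref{l:maxp} with $f = |A|^2$. The extra remarks about the running maximum and closedness of $M_t$ are sensible but implicit in the paper's treatment.
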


\begin{proof}
Simons' equation, theorem $3.2$ in \cite{HP},  for $|A|^2$ gives that
\begin{align}
	\left( \partial_t - \Delta_{M_t} \right) \, |A|^2 &= -2\,  |\nabla A|^2   + 2 |A|^4 \, . 
\end{align}
The Corollary now follows by applying Lemma \ref{l:maxp} with $f= |A|^2$.
\end{proof}

\begin{Pro}	\label{p:mainpro}
Given $\Sigma$, 
 there exists $\delta_0$,  $\alpha'$, $\epsilon_0 > 0$ and $C$
so that if $w: \Sigma \to \RR$ satisfies 
\begin{equation}
 |w| + |\nabla w| \leq \delta \leq \delta_0 {\text{ and }} |\Hess_w| \leq 1 \, , 
\end{equation}
 then there is a  solution of rescaled MCF 
$u: \Sigma \times [0,\epsilon_0] \to \RR$ with $u(x,0) = w(x)$ and
\begin{enumerate}
\item[(A)] $|u|  \leq  C \, \delta$,  $|\Hess_u| \leq C$, and  $|\nabla u|^2  \leq  C \, \delta$ on $ \Sigma \times [0,\epsilon_0] $.
\item[(B)] $\| u (\cdot , \epsilon_0 )\|_{C^{2,\alpha'}} \leq C$.
\item[(C)] Given $\alpha \in [0,\alpha')$, we have $\| u (\cdot , \epsilon_0 ) \|_{C^{2,\alpha}} \leq C \, \delta^{ \frac{\alpha' - \alpha}{2+ \alpha'} }$.
\end{enumerate}
\end{Pro}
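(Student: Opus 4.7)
The plan is to follow the three-step outline stated right before the proposition: establish $C^0$ and $C^1$ bounds that render the equation uniformly parabolic, invoke the quasilinear H\"older estimate for $\nabla u$, and then close the argument with linear Schauder estimates. Short-time existence of the rescaled MCF starting from $\Sigma_w$ is not the issue; it follows from short-time existence of MCF with initial data $\Sigma_w$ (whose second fundamental form is bounded once $|w|,|\nabla w|,|\Hess_w|$ are, since $\Sigma$ is smooth and fixed) together with the rescaling that converts MCF into rescaled MCF. The core task is therefore to upgrade a smooth short-time solution on $\Sigma \times [0,t_0]$ to the quantitative estimates (A)--(C) on $[0,\epsilon_0]$ for a definite $\epsilon_0$ depending only on $\Sigma$.

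For (A), the height bound $|u|\leq C\delta$ is exactly Lemma \ref{l:heightest}. The Hessian bound comes from Corollary \ref{e:cbounda} applied to the associated MCF: the initial graph $\Sigma_w$ has $|A|^2 \leq C_0$ uniformly (since $|w|+|\nabla w|$ is small and $|\Hess_w|\le 1$), so MCF preserves $|A|^2\le 2C_0$ for a short time, and the bounded rescaling factor linking MCF to rescaled MCF over $[0,\epsilon_0]$ transfers this to a bound $|A|^2\le C$ on $\Sigma_{u(\cdot,t)}$. In the regime $|u|+|\nabla u|\ll 1$, the standard normal-graph formula for $A$ gives $|\Hess_u| \leq C\, |A| + C\, (|u|+|\nabla u|) \leq C'$. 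The remaining gradient bound is then Landau--Kolmogorov interpolation on the compact $\Sigma$:
\begin{equation}
\|\nabla u\|_\infty^2 \leq C\, \|u\|_\infty\, \|\Hess_u\|_\infty \leq C\, \delta \, . \notag
\end{equation}

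Once (A) is secured, Lemma \ref{p:quasi} shows that the graphical rescaled MCF is a quasilinear uniformly parabolic equation
\begin{equation}
\partial_t u = \Phi_{\alpha\beta}(x,u,\nabla u)\, u_{\alpha\beta} + \Omega(x,u,\nabla u) \, , \notag
\end{equation}
with smooth coefficients and an ellipticity constant bounded away from $0$ and $\infty$ on the region where (A) holds. The interior H\"older gradient estimate for such equations (the quasilinear parabolic analogue of Krylov--Safonov, e.g.\ Chapter 12 of Lieberman's parabolic book) yields some $\alpha'\in (0,1)$ and a bound $\|\nabla u\|_{C^{\alpha'}(\Sigma \times [\epsilon_0/2,\epsilon_0])} \leq C$ depending only on $\Sigma$ and (A). At this point the coefficients $\Phi_{\alpha\beta}(x,u,\nabla u)$ and $\Omega(x,u,\nabla u)$ are themselves $C^{\alpha'}$ in $(x,t)$, so the equation becomes a linear parabolic equation with $C^{\alpha'}$ coefficients, and the interior Schauder estimate (theorem 4.9 of \cite{L2}) delivers
\begin{equation}
\|u(\cdot,\epsilon_0)\|_{C^{2,\alpha'}} \leq C \, , \notag
\end{equation}
which is (B). This quasilinear-to-linear bootstrap is the only nontrivial step; I expect it to be the main obstacle, both because it has to be set up on the manifold $\Sigma$ and because the constants must depend only on $\Sigma$ and the quantities in (A).

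Statement (C) is a soft consequence by convex interpolation between $\|u(\cdot,\epsilon_0)\|_{C^0}\leq C\delta$ from (A) and $\|u(\cdot,\epsilon_0)\|_{C^{2,\alpha'}}\leq C$ from (B). Setting $\theta = (2+\alpha)/(2+\alpha')$ so that $\theta(2+\alpha') = 2+\alpha$, the Gagliardo--Nirenberg interpolation on $\Sigma$ gives
\begin{equation}
\|u(\cdot,\epsilon_0)\|_{C^{2,\alpha}} \leq C\, \|u(\cdot,\epsilon_0)\|_{C^0}^{1-\theta}\, \|u(\cdot,\epsilon_0)\|_{C^{2,\alpha'}}^{\theta} \leq C\, \delta^{\frac{\alpha'-\alpha}{2+\alpha'}} \, , \notag
\end{equation}
as claimed. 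Lemma \ref{t:localF} then follows from Proposition \ref{p:mainpro} by iterating the short time step a bounded number of times to reach time $1$, using that at each restart the bounds in (B)--(C) feed into the hypotheses with a new (still small) $\delta$.
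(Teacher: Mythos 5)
Your proposal is correct and follows essentially the same three-step outline as the paper's own proof: Lemma \ref{l:heightest} for the height bound, Corollary \ref{e:cbounda} transferred to rescaled MCF for the Hessian bound, Landau--Kolmogorov interpolation for the gradient, the quasilinear H\"older gradient estimate followed by linear Schauder for (B), and the same H\"older interpolation inequality for (C). The only addition you make beyond the paper's text is spelling out how the $|A|^2$ bound on the evolving hypersurface converts to a $|\Hess_u|$ bound via the normal graph formula, which the paper leaves implicit.
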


\begin{proof}
The first bound in (A) follows from Lemma \ref{l:heightest}.  The second bound in (A) follows from Corollary \ref{e:cbounda}
and the relationship between MCF and the rescaled MCF; this is where $\epsilon_0 > 0$ is chosen.
The third bound in (A) follows from the first two bounds and the interpolation inequality 
\begin{equation}
	\sup_{\Sigma} |\nabla f|^2 \leq C \, \left( \sup_{\Sigma} |  f| \right) \, \left( \sup_{\Sigma} |\Hess_f| \right) \, .
\end{equation}
 
Using the uniform $|u|$ and $|\nabla u|$ bounds from (A), Lemma \ref{p:quasi} implies that the graphical mean curvature flow equation is uniformly parabolic and we get  an interior H\"older gradient estimate (see $12.10$ in \cite{L2}; cf. \cite{LSU}, $4.6$ in \cite{L1}):
 \begin{equation}
 	\| \nabla u \|_{C^{\alpha'}(\{ t \geq \frac{\epsilon_0}{4} \})} \leq C' \, , 
 \end{equation}
 for   constants $C'$ and $\alpha' > 0$ depending on the other bounds thus far.  Since the space-time gradient of $u$ is bounded, $u$ is also H\"older continuous.  Thus,    the chain rule gives
 \begin{align}
 	\| \Phi_{\alpha \beta} (x, u, \nabla u) \|_{C^{\alpha'}(\{ t \geq \frac{\epsilon_0}{4} \})} &\leq C'   \, , \\
	\| \Omega (x, u, \nabla u) \|_{C^{\alpha'}(\{ t \geq \frac{\epsilon_0}{4} \})} &\leq C'   \, .
 \end{align}
 We can now appeal to the interior Schauder estimates (theorem $4.9$ in \cite{L2})
 for the linear equation to get  an interior $C^{2,\alpha'}$ bound on $u$ 
 \begin{align}	\label{e:almostwww}
 	\| u \|_{C^{2,\alpha'}(\{ t \geq \frac{\epsilon_0}{3} \})} \leq C \, \left( \sup |u(\cdot , 0)| +   
	\| \Omega (x, u , \nabla u) \|_{C^{\alpha'}(\{ t \geq \frac{\epsilon_0}{4} \})} \right) 
	\leq C' \, .
 \end{align}
This gives (B).  
To get (C), we use the   interpolation inequality  (see page $141$ of \cite{GT})
\begin{equation}
	\| u \|_{C^{k,\alpha}(\Sigma)} \leq C \, \left(  \| u \|_{C^{k_1,\alpha_1}(\Sigma)} \right)^{\mu} \,
	\left( \| u \|_{C^{k_2,\alpha_2}(\Sigma)}  \right)^{1-\mu} \, , 
\end{equation}
where $0< \mu< 1$ and
\begin{equation}
	k+ \alpha = \mu (k_1 + \alpha_1) + (1-\mu ) \, (k_2 + \alpha_2 ) \, .
\end{equation}
If we set $k=k_1 = 2$, $k_2 = 0$, $\alpha_1 = \alpha'$  and $\alpha_2  = 0$, then 
$
	\mu = \frac{2+ \alpha}{2+ \alpha'} 
$
and we get
(C).
\end{proof}

Iterating this gives essentially the same corollary, but on the unit time interval $[0,1]$.

\begin{Cor}	\label{c:mainpro}
Choosing $\delta_0 > 0$   smaller and $C$ larger, Proposition \ref{p:mainpro} holds with $\epsilon_0 = 1$.
\end{Cor}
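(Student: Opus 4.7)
The plan is to iterate Proposition \ref{p:mainpro} on $N = \lceil 1/\epsilon_0 \rceil$ consecutive time intervals of length $\epsilon_0$ so as to reach time $1$. Set $\beta = \alpha'/(2+\alpha') \in (0,1)$ and let $\delta_*$ denote the constant called $\delta_0$ in Proposition \ref{p:mainpro}. The crucial fact is that conclusion (C) with $\alpha = 0$ gives $\|u(\cdot,\epsilon_0)\|_{C^2} \leq C\delta^\beta$, and this single $C^2$ bound simultaneously controls $|u| + |\nabla u|$ and $|\Hess_u|$, so when it is small enough it supplies admissible initial data for the next iteration.

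Define a sequence by $\eta_0 = \delta$ and $\eta_{k+1} = C \eta_k^\beta$; by induction $\eta_k \leq C^{1/(1-\beta)}\,\delta^{\beta^k}$. Provided $\eta_k \leq \min(1, \delta_*)$ for every $k \leq N$, which amounts to the single smallness requirement $C^{1/(1-\beta)}\, \delta^{\beta^N} \leq \min(1, \delta_*)$, the iteration yields a rescaled MCF solution on $[0, N\epsilon_0] \supseteq [0, 1]$; this requirement becomes the new, smaller $\delta_0$ of the Corollary.

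With existence on $[0,1]$ established, the uniform bounds in (A) follow as follows. First, $|u| \leq C\delta$ on $[0,1]$ is obtained by applying Lemma \ref{l:heightest} directly with $\epsilon = 1$; the lemma's smallness hypothesis holds because $\delta$ is small. Next, $|\Hess_u| \leq C$ holds on each subinterval by (A) of Proposition \ref{p:mainpro} with a $\Sigma$-dependent constant, which gives the global bound on $[0,1]$. Finally, $|\nabla u|^2 \leq C\delta$ follows from the interpolation inequality $\sup|\nabla f|^2 \leq C\sup|f|\sup|\Hess_f|$. The assertions (B) and (C) at $t = 1$ come from applying the corresponding conclusions of Proposition \ref{p:mainpro} on the final subinterval: (B) is immediate, and (C) gives $\|u(\cdot,1)\|_{C^{2,\alpha}} \leq C\,\eta_{N-1}^{(\alpha'-\alpha)/(2+\alpha')} \leq C'\delta^{\beta^{N-1}(\alpha'-\alpha)/(2+\alpha')}$, which has the required form after relabeling $\alpha'$.

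The main obstacle is the geometric decay of the smallness exponent through the iteration: each step trades $\delta^\gamma$ for $\delta^{\beta\gamma}$, so after $N$ steps one only has $\delta^{\beta^N}$-control. Since $\beta < 1$ and $N$ scales inversely with $\epsilon_0$, this forces the Corollary's threshold on $\delta$ to be substantially smaller than the Proposition's. This is harmless because $\epsilon_0$ and hence $N$ are fixed constants depending only on $\Sigma$; the real content is just verifying that the intermediate $C^2$ norms actually remain below the Proposition's thresholds at every step.
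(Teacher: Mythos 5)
Your proof is correct and follows essentially the same route as the paper: iterate Proposition \ref{p:mainpro} roughly $1/\epsilon_0$ times, each time using conclusion (C) with $\alpha = 0$ to check that the $C^2$ norm of $u(\cdot,\epsilon_0)$ is small enough to serve as admissible initial data for the next step. Your account is more careful than the paper's (which does not track the geometric decay of exponents, the need to re-derive the global height bound from Lemma \ref{l:heightest} rather than from per-step applications of (A), or the precise smallness threshold); the only loose end is the phrase ``has the required form after relabeling $\alpha'$,'' since $\beta^{N-1}(\alpha'-\alpha)/(2+\alpha')$ cannot be written as $(\alpha''-\alpha)/(2+\alpha'')$ uniformly in $\alpha$ for any fixed $\alpha''$ --- but this is harmless because Lemma \ref{t:localF}, the only place the corollary is used, asks merely for some positive power of $\delta$.
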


\begin{proof}
  Proposition \ref{p:mainpro} gives a solution $u$ for $t \leq \epsilon_0$.  However,  property (5) implies that the $C^2$ norm of $u(\cdot , \epsilon_0)$ is   small (i.e., bounded by a positive power of $\delta$).  Therefore, we can apply 
   Proposition \ref{p:mainpro}  again but this time with $w (x) = u(x , \epsilon_0)$.
  After iterating Proposition \ref{p:mainpro} approximately $1/\epsilon_0$ times, we get a solution up to $t=1$. 
  \end{proof}

 \vskip2mm
 
\begin{proof}[Proof of Lemma \ref{t:localF}]  
This follows immediately from Corollary \ref{c:mainpro}.
\end{proof}

\section{The $Q$-Lipschitz approximation property}

We will prove that the mapping $\Psi$ given by time one rescaled MCF is continuous:

 \begin{Cor}	\label{l:ctypsi}
There exist $\alpha > 0$ and a neighborhood of $0$ in $C^{2,\alpha}$ so that $\Psi$ is continuous.
 \end{Cor}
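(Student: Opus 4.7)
The plan is to realize $v = u_1 - u_2$ as the solution of a linear parabolic equation with H\"older coefficients, control its $C^0$-norm by the maximum principle, and then apply interior Schauder estimates at $t=1$ to upgrade to $C^{2,\alpha}$. Fix $\alpha < \alpha'$ as provided by Corollary \ref{c:mainpro}, and let $w_1,w_2$ be in a sufficiently small $C^{2,\alpha}$ neighborhood of $0$, so the corollary produces solutions $u_1,u_2:\Sigma \times [0,1] \to \RR$ of the graphical rescaled MCF with initial data $w_1,w_2$, satisfying the uniform bounds (A)--(C) of Proposition \ref{p:mainpro} and Corollary \ref{c:mainpro}.

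Using the quasilinear form $\partial_t u_i = \Omega(x,u_i,\nabla u_i) + \Phi_{\alpha\beta}(x,u_i,\nabla u_i)(u_i)_{\alpha\beta}$ from Lemma \ref{p:quasi} and the fundamental theorem of calculus along segments between $(u_2,\nabla u_2)$ and $(u_1,\nabla u_1)$, the difference $v = u_1 - u_2$ solves a linear homogeneous parabolic equation
\begin{equation}
\partial_t v = a^{\alpha\beta}(x,t)\, v_{\alpha\beta} + b^\gamma(x,t)\, v_\gamma + c(x,t)\, v, \notag
\end{equation}
whose coefficients are smooth functions of $(u_1,u_2,\nabla u_1,\nabla u_2)$. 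Because the interior parabolic H\"older gradient estimate invoked in the proof of Proposition \ref{p:mainpro} gives uniform $C^{\alpha'}$ bounds on $u_i$ and $\nabla u_i$ on $\Sigma \times [\epsilon_0/4,1]$, the coefficients $a^{\alpha\beta}, b^\gamma, c$ inherit uniform $C^{\alpha'}$ bounds there; moreover, since $\Phi_{\alpha\beta}(x,0,0) = \delta_{\alpha\beta}$ and $|u_i|,|\nabla u_i|$ are small, $a^{\alpha\beta}$ is close to the identity and the equation is uniformly parabolic.

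Next I run the maximum-principle argument of Lemma \ref{l:heightest} on $v$: since $|a^{\alpha\beta} - \delta^{\alpha\beta}|$, $|b^\gamma|$, and $|c|$ are uniformly bounded, the Gronwall-type comparison used there yields
\begin{equation}
\sup_{\Sigma \times [0,1]} |v| \leq C\, \sup_{\Sigma} |v(\cdot,0)| = C\, \|w_1 - w_2\|_{C^0}. \notag
\end{equation}
Applying interior parabolic Schauder estimates (theorem $4.9$ in \cite{L2}) to the homogeneous linear equation for $v$ on $\Sigma \times [\epsilon_0/4,1]$ then gives
\begin{equation}
\|v(\cdot,1)\|_{C^{2,\alpha'}} \leq C\, \sup_{\Sigma \times [\epsilon_0/4,1]} |v| \leq C\, \|w_1-w_2\|_{C^{2,\alpha}}. \notag
\end{equation}
Since $\Sigma$ is compact and $\alpha \leq \alpha'$, the continuous embedding $C^{2,\alpha'}(\Sigma) \hookrightarrow C^{2,\alpha}(\Sigma)$ finally gives $\|\Psi(w_1) - \Psi(w_2)\|_{C^{2,\alpha}} \leq C\|w_1-w_2\|_{C^{2,\alpha}}$, which is in fact stronger than continuity.

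The main technical hurdle is verifying the uniform H\"older regularity of the coefficients $a^{\alpha\beta}, b^\gamma, c$: this requires tracking how the $C^{\alpha'}$ bound on $(u_i,\nabla u_i)$ from the quasilinear H\"older gradient estimate of Proposition \ref{p:mainpro} propagates through composition with the smooth nonlinearities $\Phi,\Omega$ and through the fundamental-theorem-of-calculus expansion. Once this is in place the maximum-principle and Schauder steps are routine applications of the standard linear theory.
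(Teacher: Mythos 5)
Your proof is correct, but takes a genuinely different route from the paper. The paper's own argument combines the $L^2$ stability estimate of Lemma \ref{l:L2bQ} (which exploits the quadratic structure of the nonlinearity $\cQ$ via Lemma \ref{l:cQLip}) with the uniform $C^{2,\alpha'}$ bound from Corollary \ref{c:mainpro}, and then interpolates, giving a H\"older-type modulus of continuity
$\|\Psi(w_1) - \Psi(w_2)\|_{C^{2,\alpha}} \leq C \, \|w_1 - w_2\|_{L^2}^{\beta}$.
You instead linearize the difference $v = u_1 - u_2$ into a genuinely linear uniformly parabolic equation, run the maximum principle for a $C^0$ bound, and invoke Schauder estimates directly on $v$, which yields the stronger Lipschitz bound $\|\Psi(w_1) - \Psi(w_2)\|_{C^{2,\alpha'}} \leq C \, \|w_1 - w_2\|_{C^0}$. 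The paper's route is shorter in context because Lemma \ref{l:L2bQ} is needed anyway for the $Q$-Lipschitz approximation property (Proposition \ref{p:lips}); yours is more classical and more self-contained, and buys a quantitatively stronger conclusion. Two small points to tighten: first, the zeroth- and first-order coefficients $c$ and $b^{\gamma}$ of your linear equation contain $\Hess u_2$, arising from the term $\bigl[\Phi_{\alpha\beta}(x,u_1,\nabla u_1) - \Phi_{\alpha\beta}(x,u_2,\nabla u_2)\bigr]\,(u_2)_{\alpha\beta}$, so their parabolic H\"older regularity really rests on the full interior $C^{2,\alpha'}$ Schauder bound \eqr{e:almostwww} from the proof of Proposition \ref{p:mainpro}, not merely on the H\"older gradient estimate you cite; second, the maximum-principle step for $v$ is actually simpler than the argument of Lemma \ref{l:heightest}, since $v$ solves a linear equation with uniformly elliptic leading coefficient and bounded lower-order coefficients, so at an interior maximum one has $\nabla v = 0$ and $v_{\alpha\beta}$ negative semi-definite, hence $\partial_t v \leq c\, v$, and Gronwall closes the argument directly.
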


 Furthermore, we will  prove that $\Psi$ 
 has the $Q$-Lipschitz approximation property (3),   i.e., the $Q$-Lipschitz norm of $\Psi - T$ is small. 
 Thus, let $u$ be a  solution of the nonlinear  equation 
\begin{align}	\label{e:uQ}
	\left( \partial_t - L \right) \, u = \cQ (u) \, ,
\end{align}
where the nonlinear $\cQ$ satisfies property (Q) from Lemma \ref{t:nonlinQ} 
on a closed  shrinker $\Sigma$.

\begin{Pro}	\label{p:lips}
Given $C_2$, 
there exist
  $\delta_1 > 0$, $\epsilon > 0$ and $C_1$ so that if $u_1$, $u_2$ solve 
 \eqr{e:uQ}  for $t \in [0,1]$ with  
 \begin{align*}
   \| u_i \|_{C^1} \leq \delta \leq \delta_1, \, |\Hess_{u_i}| \leq C_2  {\text{ and }} 
  u_i (x,0)  = w_i (x) {\text{ for }}
 i=1,2 \, , 
 \end{align*}
   and $T$ is the linear map from \eqr{e:defT}, then 
 \begin{align}	 
	 \left\|   (u_1(x,1) - T(w_1) (x)) - (u_2(x,1) - T(w_2) (x)) 
	\right\|^2_{W^{1,2}}
	  \leq C_1 \, 
	   \delta^{\epsilon}  \,   \left\| w_1 - w_2 \right\|^2_{L^2}  \, . \notag
\end{align}
\end{Pro}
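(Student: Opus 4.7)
The plan is to reduce to an inhomogeneous linear problem with vanishing initial data and then perform weighted energy estimates exploiting the $\delta$-smallness built into $\cQ$ via Lemma \ref{t:nonlinQ}. Let $\bar u_i(\cdot,t)$ denote the solution of the linear equation $(\partial_t-L)\bar u_i=0$ with $\bar u_i(\cdot,0)=w_i$, so that $\bar u_i(\cdot,1)=T(w_i)$ by definition of $T$. Setting $\phi:=(u_1-\bar u_1)-(u_2-\bar u_2)$, the quantity to bound is exactly $\|\phi(\cdot,1)\|_{W^{1,2}}^2$, and $\phi$ solves $(\partial_t-L)\phi=\cR$ with $\cR:=\cQ(u_1)-\cQ(u_2)$ and $\phi(\cdot,0)=0$. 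The auxiliary $\psi:=u_1-u_2$ solves the same equation with initial datum $w_1-w_2$.

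\textbf{Weighted $L^2$ energy estimates.} I would first test $(\partial_t-L)\psi=\cR$ against $\psi\,e^{-|x|^2/4}$. Using the decomposition of $\cR$ from Lemma \ref{t:nonlinQ} and integrating by parts on the divergence piece $\dv_\Sigma W$ and on the gradient pieces $\langle\nabla\bar h_{u_1},V\rangle$ and $\langle\nabla(h+H\psi),\bar V_{u_2}\rangle$, the derivatives shift off $\psi$ onto the factors $W$, $\bar h_{u_1}$, $h$, $\bar V_{u_2}$, all of which are pointwise of order $\delta$. This gives $|\int\psi\,\cR\,e^{-|x|^2/4}|\leq C\delta\,\|\psi\|_{W^{1,2}}^2$, and Gronwall then yields
\[
\sup_{t\in[0,1]}\|\psi(\cdot,t)\|_{L^2}^2 + \int_0^1\|\nabla\psi\|_{L^2}^2\,dt \leq C\|w_1-w_2\|_{L^2}^2
\]
(weighted norms throughout). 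The same IBP applied to $\int\phi\,\cR\,e^{-|x|^2/4}$ gives $|\int\phi\,\cR\,e^{-|x|^2/4}|\leq C\delta\,\|\phi\|_{W^{1,2}}\,\|\psi\|_{W^{1,2}}$, and since $\phi(\cdot,0)=0$, Gronwall combined with the previous bound produces the key quadratic smallness
\[
\sup_{t\in[0,1]}\|\phi(\cdot,t)\|_{L^2}^2 + \int_0^1\|\nabla\phi\|_{L^2}^2\,dt \leq C\delta^2\,\|w_1-w_2\|_{L^2}^2.
\]

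\textbf{Pointwise $W^{1,2}$ bound at $t=1$, and main obstacle.} To convert the time-integrated gradient bound into a pointwise estimate at $t=1$, I would test the $\phi$-equation against $tL\phi$ in the Gaussian inner product, using self-adjointness of $L$. After Cauchy-Schwarz absorbs the $\|L\phi\|^2$ term, one obtains
\[
\|\nabla\phi(\cdot,1)\|_{L^2}^2 \leq C\|\phi(\cdot,1)\|_{L^2}^2 + C\int_0^1\|\nabla\phi\|_{L^2}^2\,dt + C\int_0^1\|\cR\|_{L^2}^2\,dt.
\]
The first two terms are already $O(\delta^2\|w_1-w_2\|_{L^2}^2)$ by the previous paragraph. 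The main obstacle is the last term: the IBP trick that produced the $\delta$ factor at the $L^2$ level is unavailable, and the pointwise bound $\|\cR\|_{L^2}^2\leq C\|\psi\|_{W^{1,2}}^2+C\delta^2\|\Hess_\psi\|_{L^2}^2$ coming from Lemma \ref{t:nonlinQ} has no $\delta$ prefactor on $\|\psi\|_{W^{1,2}}^2$, the $C|\Hess_u|(|\psi|+|\nabla\psi|)$ contribution to $|\dv_\Sigma W|$ being the culprit. To close the estimate I would control $\int_0^1\|\Hess_\psi\|_{L^2}^2\,dt$ by an auxiliary $H^1$-energy estimate for $\psi$ (testing against $L\psi$ and absorbing an $O(\delta^2)$ factor) and then interpolate with the Schauder bound $\|u_i(\cdot,1)\|_{C^{2,\alpha}}\leq C\delta^\tau$ from Corollary \ref{c:mainpro} to trade the non-$\delta$-scaled contributions against the $L^2$-smallness of $w_1-w_2$, producing the claimed $\delta^\epsilon$ factor.
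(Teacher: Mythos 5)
Your setup and the $L^2$-level energy estimates match the paper's; you also correctly identify the central difficulty, namely that the pointwise bound on $\cQ(u_1)-\cQ(u_2)$ from Lemma \ref{t:nonlinQ} carries a factor $|\Hess_{u_i}|$ without any $\delta$-smallness, so that $\int_0^1 \|\cR\|_{L^2}^2\,dt$ sees the raw constant $C_2$. The gap is in your proposed resolution. Writing ``interpolate with the Schauder bound $\|u_i(\cdot,1)\|_{C^{2,\alpha}}\leq C\delta^\tau$ ... to trade the non-$\delta$-scaled contributions against the $L^2$-smallness of $w_1-w_2$'' does not close the argument: any interpolation of $\phi$ between a small $L^2$ bound and the $\delta^\tau$ Schauder bound necessarily produces $\|w_1-w_2\|_{L^2}^{2\theta}$ for some $\theta<1$, which degrades the required Lipschitz estimate into a H\"older one. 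The interpolation that actually works is applied to the $u_i$ themselves, not to $\phi$: interior parabolic Schauder estimates together with the $C^{2,\alpha}$-$/C^0$ interpolation inequality upgrade the hypothesis $|\Hess_{u_i}|\leq C_2$ to $\sup_{t\in[1/2,1]}|\Hess_{u_i}(\cdot,t)|\leq C\,\delta^{\epsilon}$, so that on the interval $[1/2,1]$ the constant in the pointwise bound on $\cR$ becomes $C(\delta+\delta^{\epsilon})^2$ rather than $C(\delta+C_2)^2$.

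Even granting this, your test function $t\,L\phi$ does not localize the estimate away from the initial layer $[0,1/2]$ where $\Hess_{u_i}$ is only bounded, not small; the weight $t\leq 1$ provides no extra gain there, since the offending term $(\delta+C_2)^2\|\psi\|_{W^{1,2}}^2$ is merely $O(1)$, not singular as $t\to 0$. What is needed is a genuine restriction of the $W^{1,2}$ energy argument to a subinterval of $[1/2,1]$: pick, by the mean-value theorem applied to the spacetime bound $\int_0^1\|\nabla v\|_{L^2}^2\,dt\leq C\delta\|w_1-w_2\|_{L^2}^2$, a time $t_0\in[1/2,1]$ at which $\|\nabla v(\cdot,t_0)\|_{L^2}^2$ is already of the right size, and then run the derivative-of-$\|\nabla v\|^2$ estimate only from $t_0$ to $1$. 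On $[t_0,1]\subset[1/2,1]$ the upgraded Hessian bound is in force, the $\int\|\cR\|_{L^2}^2$ term picks up the $\delta^{\epsilon}$, and the remaining $\delta^2\|\Hess_u\|_{L^2}^2$ term is controlled by the auxiliary $H^1$-energy bound you mention. Without both the interior-time Hessian upgrade on $u_i$ and this time-localization, the $\delta^{\epsilon}$ prefactor cannot be extracted.
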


 \subsection{Lipschitz continuity of  $\cQ$}
  
  The next lemma shows that the nonlinearity $\cQ$  is Lipschitz with an arbitrarily small Lipschitz bound near $0$.  This is expected as the nonlinearity is higher order and thus, formally, its derivative at $0$ is zero.  We will give two formulations of this.  The first is an integral bound with a slightly better dependence, while the second is a pointwise bound that depends also on the second derivatives of the difference.
  
 \begin{Lem}	\label{l:cQLip}
There  exist $C$ and $\delta_0 > 0$ so that
if $u_1$ and $u_2$ have   $|u_i| + |\nabla u_i|   \leq \delta \leq  \delta_0$ 
and $\left| \Hess_{u_i} \right| \leq C_2$ for $i=1,2$ and $v$ is a function, then 
\begin{align}
	 \int_{\Sigma}
	   v    \, \left( \cQ(u_1) - \cQ(u_2) \right) \, \e^{ - \frac{|x|^2}{4} }  \leq  
	  C \, \delta \,  \int_{\Sigma}  
	 \left((1 + C_2) \,  |v  | + |\nabla v | \right) \,  \left( |u| + |\nabla u| \right)  
	    \, \e^{ - \frac{|x|^2}{4} }  
	      \, ,
\end{align}
where  $u(p) = u_1 (p) - u_2 (p)$ is the difference of the $u_i$'s. 
 Moreover, we have
\begin{align}	\label{e:comeback}
	\left| \cQ(u_1) - \cQ(u_2) \right| &\leq C(\delta + C_2) \,   \left( |u| + |\nabla u | \right)  + C \, \delta \,  |\Hess_u | \, .
\end{align}
\end{Lem}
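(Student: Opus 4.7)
The plan is to invoke the structural decomposition provided by Lemma~\ref{t:nonlinQ} and then bound each piece. Writing $u = u_1 - u_2$ in accord with the lemma's notation, we have
\begin{equation*}
\cQ(u_1) - \cQ(u_2) \;=\; f + \dv_{\Sigma}(W) + \langle \nabla \bar{h}_{u_1}, V \rangle + \langle \nabla ( h + H\,u ), \bar{V}_{u_2} \rangle .
\end{equation*}
The hypothesis gives $\|u_i\|_{C^1} \leq \delta$ and $|\Hess_{u_i}| \leq C_2$, hence $|\Hess_{u_1}| \leq C_2$ and $|\Hess_u| \leq 2 C_2$; it will be important below to keep these two Hessian quantities distinct.

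For the pointwise estimate \eqref{e:comeback}, I apply the bounds in (Q) to each summand directly. The $|\Hess_u|$ in the target bound comes exclusively from $\dv_\Sigma W$, via $|\dv_\Sigma W| \leq C\delta(|u|+|\nabla u|+|\Hess_u|) + CC_2(|u|+|\nabla u|)$, while every other summand is controlled using only $|\Hess_{u_1}|\leq C_2$ and produces a contribution of the form $C(\delta + C_2)(|u|+|\nabla u|)$. Summing gives \eqref{e:comeback}.

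For the weighted integral estimate, I multiply by $v e^{-|x|^2/4}$ and integrate. The summands $f$, $\langle \nabla \bar{h}_{u_1}, V\rangle$, and $\langle \nabla(Hu), \bar V_{u_2}\rangle$ admit direct pointwise bounds $\leq C\delta(1+C_2)(|u|+|\nabla u|)$, where the $(1+C_2)$ comes from $|\nabla \bar h_{u_1}| \leq C\|u_1\|_{C^1}(1+|\Hess_{u_1}|)$ and the smoothness of $H$ on $\Sigma$; these contribute cleanly to the $(1+C_2)|v|$ part of the target bound. The remaining two summands, $\dv_\Sigma(W)$ and $\langle \nabla h, \bar V_{u_2}\rangle$, would generate a term with $|\Hess_u|$ if estimated pointwise, which the integral bound cannot accommodate; I remove these via integration by parts against the Gaussian weight, using $\nabla e^{-|x|^2/4} = -\tfrac{x^T}{2} e^{-|x|^2/4}$ and the fact that $\Sigma$ is closed so no boundary terms appear. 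Specifically,
\begin{equation*}
\int_\Sigma v\,\dv_\Sigma(W)\, e^{-|x|^2/4} \;=\; -\int_\Sigma \langle \nabla v, W\rangle\, e^{-|x|^2/4} + \int_\Sigma v \langle W, x^T/2\rangle\, e^{-|x|^2/4},
\end{equation*}
and $|W| \leq C\delta(|u|+|\nabla u|)$ carries no Hessian; since $|x|$ is bounded on the closed shrinker $\Sigma$, both terms are controlled by $C\delta \int (|v|+|\nabla v|)(|u|+|\nabla u|) e^{-|x|^2/4}$. Analogously, $\int v \langle \nabla h, \bar V_{u_2}\rangle e^{-|x|^2/4}$ is rewritten by moving the derivative off $h$ (via $\langle \nabla h, \bar V_{u_2}\rangle = \dv(h \bar V_{u_2}) - h \dv \bar V_{u_2}$), producing three terms each controlled by combining $|h| \leq C\delta(|u|+|\nabla u|)$ with $|\bar V_{u_2}| \leq C\delta$ and $|\dv_\Sigma \bar V_{u_2}| \leq C(\delta + C_2)$. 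Summing all contributions yields the claimed integral bound.

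The main obstacle is the bookkeeping around the two Hessian quantities: $|\Hess_{u_1}|$, which is bounded by $C_2$ and is absorbed harmlessly through the $(1+C_2)$ factor in the target bound, and $|\Hess_u|$, which cannot appear in the integral estimate because it would generate a term not scaling with $(|u|+|\nabla u|)$. The integration-by-parts trick against the Gaussian weight is precisely what eliminates $|\Hess_u|$: derivatives of $u$ are transferred onto $v$, where $|\nabla v|$ is permitted by the target bound.
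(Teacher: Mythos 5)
Your proof is correct and follows the paper's strategy exactly: decompose $\cQ(u_1)-\cQ(u_2)$ via Lemma~\ref{t:nonlinQ}, bound the $f$, $\langle\nabla\bar{h}_{u_1},V\rangle$, and $\langle\nabla(Hu),\bar{V}_{u_2}\rangle$ terms pointwise, and integrate by parts against the Gaussian weight on the $\dv_{\Sigma}W$ and $\langle\nabla h,\bar{V}_{u_2}\rangle$ terms so that no $|\Hess_u|$ enters the integral estimate. The one slip is the claim that $|\Hess_u|$ in \eqref{e:comeback} comes exclusively from $\dv_{\Sigma}W$: by \eqref{e:lastone35b} the function $\nabla h$ also carries a $C\,\delta\,|\Hess_u|$ term, so $\langle\nabla h,\bar{V}_{u_2}\rangle$ contributes $C\,\delta^2\,|\Hess_u|$ as well; this is absorbed into $C\,\delta\,|\Hess_u|$, so the conclusion is unaffected.
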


\begin{proof}
Property (Q) from Lemma \ref{t:nonlinQ} gives that
\begin{align}	 
	v \, (\cQ(u_1) - \cQ(u_2))  
	 &=   v  \, 
	 	 \left( f +  \dv_{\Sigma} (W)  + \langle \nabla \bar{h}_{u_1} , V \rangle + \langle \nabla h , \bar{V}_{u_2} \rangle
		 + \langle \nabla (H\, u)  , \bar{V}_{u_2} \rangle
	 \right) \, , 
\end{align}
where $f$, $h$, $\bar{h}_{u_1}$, $V$, $\bar{V}_{u_2}$ and  $W$   are given by property (Q)
and $H$ is the mean curvature of $\Sigma$.
We will bound the integrals of each of the five terms on the right individually.

{\bf{The first term}}.  Using the bound 
$	|f| \leq   
	  C  \, \delta  \,   \left( |u| + |\nabla u | \right)$
 from property (Q) gives
\begin{align}
	\left| \int_{\Sigma}  v \, 
	 	 f   \, \e^{ - \frac{|x|^2}{4} }  \right| \leq  C  \, \delta  \,   \int_{\Sigma}  |v| \, 
	 	  \left( |u| + |\nabla u | \right)  \, \e^{ - \frac{|x|^2}{4} }  \, .
\end{align}

{\bf{The second term}}.  
We use 
 Stokes' theorem to take the derivatives  off of $W$  to get  
\begin{align}
	  \int_{\Sigma}    v  \,   \dv_{\Sigma} (W)  
	 \, \e^{ - \frac{|x|^2}{4} } &= 
	   \int_{\Sigma}  \left\{ - \langle \nabla   v \, , W \rangle + \frac{1}{2}
	  v \, \langle W , x \rangle \right\}
	 \, \e^{ - \frac{|x|^2}{4} }  	  \, .
\end{align}
Since $|x|$ is bounded on   $\Sigma$ and (Q) gives
$
	  |W|  \leq  	  C  \, \delta  \,   \left( |u| + |\nabla u | \right)$,
	  we get that
\begin{align}
	\left|   \int_{\Sigma}    v  \,   \dv_{\Sigma} (W)  
	 \, \e^{ - \frac{|x|^2}{4} }  \right| &\leq C \, \delta \, 
	   \int_{\Sigma} \left( |v| + | \nabla   v| \right) \, \left( |u| + |\nabla u| \right)
	 \, \e^{ - \frac{|x|^2}{4} }  	  \, .
\end{align}

{\bf{The third term}}.  Property (Q) gives  
$
	|V|  \leq C \, \left( |u| + |\nabla u| \right) $ and
	$\left| \nabla \bar{h}_{u_1} \right|  
	\leq C \, \delta \, (1 + C_2)$.
	This allows us to bound the third term by
\begin{align}
	\left|  \int_{\Sigma}  v\,   \langle \nabla \bar{h}_{u_1} ,  {V}  \rangle
	 \, \e^{ - \frac{|x|^2}{4} } \right| &\leq C \, \delta \, (1 + C_2)
	   \int_{\Sigma}   |v|   \, \left( |u| + |\nabla u| \right)
	 \, \e^{ - \frac{|x|^2}{4} }  	  \, .
\end{align}

{\bf{The fourth term}}. We use Stokes' theorem to take the derivative off of $h$ to get
\begin{align}
	  \int_{\Sigma}   v \,  \langle \nabla h , \bar{V}_{u_2} \rangle 
	 \, \e^{ - \frac{|x|^2}{4} } &= 
	   \int_{\Sigma}  \left\{ - \langle \nabla  v \, , h  \bar{V}_{u_2}  \rangle 
	   - v \, h \, \dv_{\Sigma} 
	   \left(
	    \bar{V}_{u_2} \right) 
	   + \frac{1}{2}
	  v \, \langle h \,\bar{V}_{u_2}   , x \rangle \right\}
	 \, \e^{ - \frac{|x|^2}{4} }  	  \, .
\end{align}
Since $|x|$ is bounded on   $\Sigma$ and    (Q) gives that
$	\left|  h    \right| \leq C \, \delta \, \left( |u| + |\nabla u| \right)$, 
	$\left|   \bar{V}_{u_2}  \right| \leq C \, \delta$,  and 
	 $|  \dv_{\Sigma} (\bar{V}_{u_2})| \leq C\,   (1 + C_2 )$, 
we bound the fourth term by
\begin{align}
	 \left|  \int_{\Sigma}   v \,  \langle \nabla h , \bar{V}_{u_2} \rangle 
	 \, \e^{ - \frac{|x|^2}{4} } \right|   &\leq
	  C \, \delta \,  \int_{\Sigma}   \left( |v| + | \nabla   v| \right) \, \left( |u| + |\nabla u| \right)
	 \, \e^{ - \frac{|x|^2}{4} }  \notag \\
	 &\qquad + C \, \delta \, C_2 
	  \int_{\Sigma}    |v|   \left( |u| + |\nabla u| \right)
	 \, \e^{ - \frac{|x|^2}{4} }	  \, .
\end{align}

{\bf{The fifth term}}. Since $|H| + |\nabla H|$ is bounded on the closed surface, we have
\begin{align}
	\left|   \int_{\Sigma}  v\,  \langle \nabla (H\, u)  , \bar{V}_{u_2} \rangle
	 \, \e^{ - \frac{|x|^2}{4} } \right| &\leq C  
	   \int_{\Sigma}   |v|   \, \left( |u| + |\nabla u| \right) \, \left| \bar{V}_{u_2}  \right|
	 \, \e^{ - \frac{|x|^2}{4} }  	  \, .
\end{align}
Using the bound   for $\left| \bar{V}_{u_2}  \right|$  bounds   the fifth term by
\begin{align}
	\left|   \int_{\Sigma}  v\,  \langle \nabla (H\, u)  , \bar{V}_{u_2} \rangle
	 \, \e^{ - \frac{|x|^2}{4} }  \right| &\leq C  \, \delta \, 
	   \int_{\Sigma}   |v|   \, \left( |u| + |\nabla u| \right)  
	 \, \e^{ - \frac{|x|^2}{4} }  	  \, .
\end{align}
This completes the proof of the integral bound.  

{\bf{The pointwise bound}}.  We argue as above for terms one, three and five, but we do not integrate by parts on terms two and four.  Instead, we use the last two conclusions from Lemma \ref{t:nonlinQ} on these terms.  Namely, we have
\begin{align}	 
	 |\dv_{\Sigma} (W) | &\leq C(\delta + C_2) \,    \left( |u| + |\nabla u | \right)  + C \, \delta \,  |\Hess_u |\,  , \\
	|\nabla h|   &\leq C(1 + C_2) \,  \left( |u| + |\nabla u | \right)  + C \, \delta \,  |\Hess_u | \, .  
\end{align}
Since $|\bar{V}_{u_2}| \leq C \, \delta$, we get
\begin{align}	 
	| \langle \nabla h , \bar{V}_{u_2} \rangle |  &\leq C(1 + C_2) \, \delta  \left( |u| + |\nabla u | \right)  + C \, \delta^2 \,  |\Hess_u | \, .  
\end{align}
The pointwise bound  now follows.
\end{proof}

\subsection{The map $\Psi$ is  continuous}

The next lemma shows that if two solutions of the nonlinear equation have initial values that are close in $L^2$, then they remain close in $W^{1,2}$.

\begin{Lem}	\label{l:L2bQ}
There  exist $C$ and $\delta_0 > 0$ so that
if $u_1$ and $u_2$ satisfy \eqr{e:uQ}  for $t \in [0,1]$ with   $|u_i| + |\nabla u_i|   \leq  \delta_0$ 
and $\left| \Hess_{u_i} \right| \leq C_2$ for $i=1,2$, then $u(x,t) = u_1 (x,t) - u_2 (x,t)$ satisfies
\begin{align}
	 \int_{\Sigma}	\label{e:19L2}
	 \left| u(x,t) \right|^2 \,\, \e^{ - \frac{|x|^2}{4} } &\leq \e^{C \, (1 + \delta_0 \, C_2^2)  t} \, \int_{\Sigma} \left| u (x,0)  \right|^2
	\, \e^{ - \frac{|x|^2}{4} }\, , \\
	  \int_0^1  \, \int_{\Sigma} \left| \nabla u (x,t)  \right|^2  \, \e^{ - \frac{|x|^2}{4} }
	&\leq  C \, (1 + \delta_0 \, C_2^2) \, \e^{C \, (1 + \delta_0 \, C_2^2) }  \,  \int_{\Sigma}  \left| u(x,0) \right|^2\, \e^{ - \frac{|x|^2}{4} }   \, .
\end{align}
Moreover, we also get 
 \begin{align}
   \| \nabla u (\cdot , 1) \|_{L^2}^2  +  \int_0^1 \int_{\Sigma}    |\Hess_u|^2   \, \e^{ - \frac{|x|^2}{4} }     &
	\leq       C \, (1 +   C_2^4) \, \e^{C \, (1 + \delta_0 \, C_2^2) }  \,  \int_{\Sigma}  \left| u(x,0) \right|^2\, \e^{ - \frac{|x|^2}{4} } 	  \, .
  \end{align}
\end{Lem}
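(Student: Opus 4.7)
\medskip

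\noindent\textbf{Proof plan.} The difference $u = u_1 - u_2$ satisfies the linear parabolic equation with a small nonlinear source,
\begin{equation}
(\partial_t - L)\,u = \cQ(u_1) - \cQ(u_2)\,,
\end{equation}
and all three estimates come from weighted energy arguments in the Gaussian $L^2(\e^{-|x|^2/4})$ inner product. The essential input is the integral Lipschitz control of Lemma \ref{l:cQLip} (for the first two bounds) and the pointwise bound \eqref{e:comeback} (for the Hessian bound). Throughout, the identity $\int u\,\cL v \, \e^{-|x|^2/4} = -\int \langle \nabla u, \nabla v\rangle \,\e^{-|x|^2/4}$ (valid on closed $\Sigma$) will be used repeatedly to transfer drift-Laplacians onto test functions.

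\medskip

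For the first two bounds, multiply the equation by $u \, \e^{-|x|^2/4}$ and integrate.  Using self-adjointness of $\cL$ and the formula $L = \cL + |A|^2 + \frac{1}{2}$, one gets
\begin{equation}
\frac{1}{2}\frac{d}{dt}\int_{\Sigma} u^2\,\e^{-\frac{|x|^2}{4}} + \int_{\Sigma} |\nabla u|^2 \,\e^{-\frac{|x|^2}{4}} = \int_{\Sigma}\bigl(|A|^2+\tfrac12\bigr) u^2 \,\e^{-\frac{|x|^2}{4}} + \int_{\Sigma} u\,(\cQ(u_1)-\cQ(u_2))\, \e^{-\frac{|x|^2}{4}}.
\end{equation}
The nonlinear term is estimated by Lemma \ref{l:cQLip} with $v=u$, and Cauchy--Schwarz/Young absorbs all $|\nabla u|^2$ contributions into the good second term on the left, at the cost of a factor $C(1+\delta_0 C_2^2)$ on the $u^2$ term (the $C_2^2$ coming from the $(1+C_2)|u|\cdot|\nabla u|$ piece after Young, since $\delta\le\delta_0$ is small). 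A Gr\"onwall step yields \eqref{e:19L2}; then integrating in $t\in[0,1]$ and substituting \eqref{e:19L2} into the RHS yields the space-time bound on $\int |\nabla u|^2\, \e^{-|x|^2/4}$.

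\medskip

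For the Hessian/gradient-at-time-one estimate, I would multiply the equation by $-\cL u \, \e^{-|x|^2/4}$ and integrate. Self-adjointness of $\cL$ gives
$\int u_t\,(-\cL u)\,\e^{-|x|^2/4} = \frac{1}{2}\frac{d}{dt}\int |\nabla u|^2 \,\e^{-|x|^2/4}$, while the integrated Bochner identity for the drift Laplacian on a closed manifold,
\begin{equation}
\int_{\Sigma} (\cL u)^2 \,\e^{-\frac{|x|^2}{4}} = \int_{\Sigma} |\Hess_u|^2 \,\e^{-\frac{|x|^2}{4}} + \int_{\Sigma} \bigl(\Ric + \tfrac12 g\bigr)(\nabla u,\nabla u)\,\e^{-\frac{|x|^2}{4}},
\end{equation}
converts $\|\cL u\|_{L^2}^2$ into pointwise Hessian control modulo lower-order terms.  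The nonlinear contribution is bounded using the pointwise estimate \eqref{e:comeback}: each term of the form $|\cL u|\cdot \delta|\Hess_u|$ is handled by Young with $\delta$ small enough to reabsorb a small multiple of $\int |\Hess_u|^2\,\e^{-|x|^2/4}$ into the left, while terms of the form $|\cL u|\cdot(\delta+C_2)(|u|+|\nabla u|)$ become $\epsilon(\cL u)^2$ plus $\frac{(\delta+C_2)^2}{\epsilon}(u^2+|\nabla u|^2)$. Integrating in $t\in[0,1]$ and feeding in the estimates from the previous paragraph converts the $(\delta+C_2)^2\cdot C(1+\delta_0 C_2^2)$ prefactor into the claimed $C(1+C_2^4)\,\e^{C(1+\delta_0 C_2^2)}$.

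\medskip

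\noindent\textbf{Main obstacle.} The delicate point is Step 3: the pointwise nonlinear bound \eqref{e:comeback} contains a $|\Hess_u|$ term multiplied only by $\delta$, so the reabsorption into $\int (\cL u)^2\,\e^{-|x|^2/4}$ requires choosing $\delta_0$ sufficiently small \emph{independently of} $C_2$, and one must be careful to keep $C_2$ only inside the $(u^2+|\nabla u|^2)$ terms controlled by Step 1, never multiplied against $|\Hess_u|$. This is what keeps the final $C_2$-dependence to a polynomial factor $1+C_2^4$ outside an exponential governed solely by $1+\delta_0 C_2^2$, matching the statement.
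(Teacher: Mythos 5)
Your proposal reproduces the paper's argument step for step: the first two estimates via the $u^2$-energy identity, Lemma \ref{l:cQLip} with $v=u$, Young's inequality to absorb $|\nabla u|^2$, and Gr\"onwall, then time-integration for the space-time gradient bound; and the Hessian bound via pairing the equation with $-\cL u\,\e^{-|x|^2/4}$, the drift Bochner inequality $\int|\Hess_u|^2 \le \int\{(\cL u)^2 + C|\nabla u|^2\}\,\e^{-|x|^2/4}$, the pointwise estimate \eqref{e:comeback}, and reabsorption of the $\delta^2|\Hess_u|^2$ piece by choosing $\delta_0$ small independently of $C_2$. Your identification of the main obstacle — keeping $C_2$ away from the Hessian-absorbing coefficient so that it lands only on the $(u^2+|\nabla u|^2)$ terms already controlled by the first two claims — is exactly the point the paper exploits.
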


 As an immediate consequence, we also get that $\Psi$ is continuous:

 \begin{proof}[Proof of Corollary \ref{l:ctypsi}]
 Let $u_1$ and $u_2$ be solutions as in Lemma \ref{l:L2bQ} and set $u = u_1 - u_2$.  
 Lemma \ref{l:L2bQ} gives  
 \begin{equation}	\label{e:kmj}
 	\| u (\cdot , 1 ) \|_{L^2} \leq C \, \| u (\cdot , 0 ) \|_{L^2} \, ,
 \end{equation}
 where $C$ is uniform as long as $u_1$ and $u_2$ are small in $C^1$ and bounded in $C^2$.  On the other hand, 
 Corollary \ref{c:mainpro} gives a uniform $C^{2, \alpha'}$ bound for $u_1$ and $u_2$, and hence also for $u$, for some $\alpha' > \alpha$.  
It follows from  interpolation inequalities that there is a $\beta$ in $(0,1)$ so that
\begin{equation}
	\| u (\cdot , 1 ) \|_{C^{2, \alpha}} \leq C \, \| u (\cdot , 1 ) \|^{\beta}_{L^2} \, \| u (\cdot , 1 ) \|^{1-\beta}_{C^{2, \alpha'}} \, .
\end{equation}
Finally, the corollary follows by combining this with \eqr{e:kmj}.
 \end{proof}

\begin{proof}[Proof of Lemma \ref{l:L2bQ}]
Within this proof, $C$ will be a constant that is allowed to change from line to line and depends only on  $\Sigma$ and an upper bound for  
$\| u_1 \|_{C^1} + \| u_2 \|_{C^1}$.

Recall that if $\phi$ is any function, then Stokes' theorem gives
$
	\int_{\Sigma} ( \cL \, \phi ) \, \e^{ - \frac{|x|^2}{4} } = 0$.
	Applying this with $\phi = u^2$ and using  that $L = \cL + |A|^2 + \frac{1}{2}$ gives
\begin{align}	 	\label{e:putbackin}
	\partial_t   \int_{\Sigma} u^2   \e^{ - \frac{|x|^2}{4} }  &=
	  \int_{\Sigma} \left( \partial_t - \cL  \right) \, u^2  \e^{ - \frac{|x|^2}{4} }
	 \leq \int_{\Sigma}  \left\{ C  u^2 - \left| \nabla u \right|^2 + u \left( \cQ(u_1) - \cQ(u_2) \right) \right\}  \e^{ - \frac{|x|^2}{4} }   \, . 
\end{align}   
Applying Lemma \ref{l:cQLip}  with $v=u$  bounds the $ u\, \left( \cQ(u_1) - \cQ(u_2) \right) $ term by
\begin{align}
	 \int_{\Sigma}
	   u    \, \left( \cQ(u_1) - \cQ(u_2) \right) \, \e^{ - \frac{|x|^2}{4} } & \leq  
	  C \, \delta_0 \,  \int_{\Sigma}  
	 \left((1 + C_2) \,  |u  | + |\nabla u | \right) \,  \left( |u| + |\nabla u| \right)  
	    \, \e^{ - \frac{|x|^2}{4} }  
	\notag \\
	& \leq  
	  C \, \delta_0 \,  \int_{\Sigma}  
	 \left(   \left( 1 + C_2^2 \right) \, u^2  
	+  |\nabla u|^2
	\right)
	    \, \e^{ - \frac{|x|^2}{4} }   \, ,
\end{align}
where the last inequality used   $(a+b) (c+d)\leq a^2 + b^2 + c^2 + d^2$.  
Putting this    into
\eqr{e:putbackin}
 gives
\begin{align}	\label{e:rbwwb2a}
	\partial_t    \int_{\Sigma} u^2 \, \e^{ - \frac{|x|^2}{4} } 
	  &\leq 
	    \int_{\Sigma}  \left\{    \left( C +C \, \delta_0 (1+ C_2^2) \right) \,
	    u^2 +  \left[  C \, \delta_0   - 2 \right]  \, |\nabla u|^2 
	       \right\}  \e^{ - \frac{|x|^2}{4} } \, .
	 \end{align}
  To control the energy term,  take $\delta_0 > 0$  small so that  
$ 
	   C \, \delta_0     \leq 1 $.   Using this in \eqr{e:rbwwb2a}  
gives
\begin{align}	\label{e:rbwwb2}
	\partial_t    \int_{\Sigma} u^2 \, \e^{ - \frac{|x|^2}{4} }   \leq 
	  C \, \left( 1 +    \delta_0 \, C_2^2 \right) \, \int_{\Sigma}  u^2 \, \e^{ - \frac{|x|^2}{4} } 
  -   \int_{\Sigma} \left| \nabla u \right|^2   \, \e^{ - \frac{|x|^2}{4} } 
	 \, .   
\end{align}
  The first claim follows by throwing away the last term   and integrating this differential inequality from $0$ to $t\leq1$.
To get the second claim, we integrate \eqr{e:rbwwb2} in  time  to get
\begin{align}
	\int_{t=1}  u^2 \, \e^{ - \frac{|x|^2}{4} } 
 -  \int_{t=0} u^2\,   \e^{ - \frac{|x|^2}{4} } 
 &\leq  
	C  \left( 1 + \delta_0 \, C_2^2 \right) \, \int_0^1 \, \int_{\Sigma}  u^2  \e^{ - \frac{|x|^2}{4} }   -   \int_0^1  \, \int_{\Sigma} \left| \nabla u \right|^2  \e^{ - \frac{|x|^2}{4} } 
 \, .
\end{align}
Combining this with
  the first claim gives the second claim.

  We turn next to   the higher derivative bounds.   We have
  \begin{align}
  	\frac{1}{2} \, \partial_t \, \int_{\Sigma} |\nabla u|^2 \, \e^{ - \frac{|x|^2}{4} }  &= \int_{\Sigma} \langle \nabla u_t , \nabla u \rangle \, \e^{ - \frac{|x|^2}{4} } 
	= -  \int_{\Sigma}    u_t \cL  u  \, \e^{ - \frac{|x|^2}{4} }  \notag \\
	&= -  \int_{\Sigma}    \left( \cL u  + (|A|^2 + 1/2) u +  \left( \cQ(u_1) - \cQ(u_2) \right) \right) \, \cL  u  \, \e^{ - \frac{|x|^2}{4} } \\
	&\leq    \int_{\Sigma}    \left\{ - \frac{1}{2} (\cL u)^2  + C \, |\nabla u|^2 + u^2 +  \frac{1}{2}\, \left( \cQ(u_1) - \cQ(u_2) \right)^2 \right\}   \, \e^{ - \frac{|x|^2}{4} } 
	\notag \, .
  \end{align}
  The drift Bochner formula and the divergence theorem give that
  \begin{align}
  	\int_{\Sigma}  |\Hess_u|^2 \, \e^{ - \frac{|x|^2}{4} } 
 	\leq \int_{\Sigma}  \left\{ (\cL u)^2 + C \, |\nabla u|^2 \right\} \, \e^{ - \frac{|x|^2}{4} } \, , 
  \end{align}
  so we get that 
   \begin{align}
    \partial_t \, \int_{\Sigma} |\nabla u|^2 \, \e^{ - \frac{|x|^2}{4} }  &
	\leq    \int_{\Sigma}    \left\{ -   |\Hess_u|^2  + C \, |\nabla u|^2 + 2\, u^2 +    \left( \cQ(u_1) - \cQ(u_2) \right)^2 \right\}   \, \e^{ - \frac{|x|^2}{4} } 
	  \, .
  \end{align}
  The last part of Lemma \ref{l:cQLip}
gives  
\begin{align}	 
	\left| \cQ(u_1) - \cQ(u_2) \right|^2 &\leq C( 1 + C_2^2) \,   \left( |u|^2 + |\nabla u |^2 \right)  + C \, \delta^2 \,  |\Hess_u |^2 \, .
\end{align}
Taking $\delta_0 > 0$ small enough that $C \, \delta^2 < \frac{1}{2}$, it follows that
   \begin{align}
   \int_{\Sigma}  \frac{ |\Hess_u|^2 }{2}  \, \e^{ - \frac{|x|^2}{4} }  +  \partial_t \, \int_{\Sigma} |\nabla u|^2 \, \e^{ - \frac{|x|^2}{4} }  &
	\leq     C( 1 + C_2^2) \,   \int_{\Sigma}    \left( |u|^2 + |\nabla u |^2 \right)     \, \e^{ - \frac{|x|^2}{4} } 
	  \, .
  \end{align}
  Integrating this in $t$ and using the first two claims to bound the right-hand side completes the proof.
\end{proof}

 \subsection{The $Q$-Lipschitz approximation property}

We will next prove the $Q$-Lipschitz approximation property for the time-one rescaled MCF in a neighborhood of a  shrinker $\Sigma$.
Namely,   the time one flow is $Q$-Lipschitz close to the linear  mapping $T$ defined in \eqr{e:defT}.

\begin{proof}[Proof of Proposition \ref{p:lips}]
Let $\tilde{w}_i$ solve  $
	\left( \partial_t - L \right) \, \tilde{w}_i = 0$  with 
$
 \tilde{w}_i (x,0) = w_i (x) $, so that  $\tilde{w}_i (x,1) = T(w_i)(x)$.  Set
 $v_i = u_i - \tilde{w}_i$.  It follows that $v_i (x,0)  = 0 $
 \begin{align}
 	v_i (x,1) &= u_i (x,1) - T(w_i)(x) \, , \\
	\left( \partial_t - L \right) \, v_i &= \cQ(u_i)  \, .
\end{align}
Finally, let $v$ and $u$ be the differences of the $v_i$'s and $u_i$'s, i.e., 
\begin{align}
	v(x,t) &= v_1 (x,t) - v_2 (x,t) \, , \\
	u(x,t) &= u_1 (x,t) - u_2 (x,t) \, , 
\end{align}
and define $\psi(t)$ to be the $L^2$ norm (squared) of $v$ at time $t$
\begin{equation}
	\psi(t) =   \int_{\Sigma}  \left|   v (x,t) \right|^2 \, \e^{ - \frac{|x|^2}{4} } \, .
\end{equation}
To prove \eqr{e:19L2}, we will get a uniform bound for  $\psi (t)$ for all $t\leq 1$.

 We will derive a differential inequality for $\psi (t)$.  Applying $\partial_t - \cL$ to $v^2$ as in \eqr{e:putbackin} gives
\begin{align}	\label{e:pski}
	\frac{1}{2} \, \psi ' (t) &
	 \leq \int_{\Sigma}  \left\{ C \, v^2  - \left| \nabla v  \right|^2  + v \,  (\cQ(u_1) - \cQ(u_2))     \right\}
	 \, \e^{ - \frac{|x|^2}{4} }   
	\, .
\end{align}
Applying Lemma \ref{l:cQLip}, we bound the $v \,  (\cQ(u_1) - \cQ(u_2)) $ term by
\begin{align}	
	& \int_{\Sigma}  v \, 
	 	 (\cQ(u_1) - \cQ(u_2))   \, \e^{ - \frac{|x|^2}{4} }    \leq C \, \delta \,  \int_{\Sigma}  
	 \left((1 + C_2) \,  |v  | + |\nabla v | \right) \,  \left( |u| + |\nabla u| \right)  
	    \, \e^{ - \frac{|x|^2}{4} }     \notag \\
	   &\qquad \qquad  \leq C \, \delta   \,  \int_{\Sigma}  
	\left( u^2 + |\nabla u|^2 + (1 + C_2)^2 \, v^2 + |\nabla v|^2 \right)  
	    \, \e^{ - \frac{|x|^2}{4} } \, , 
\end{align}
where the last inequality used the   inequality $(a+b) (c+d)\leq a^2 + b^2 + c^2 + d^2$.  
  Substituting this  bound back into \eqr{e:pski}, we get that
 \begin{align}
	  \psi ' (t) &\leq 
 	 C \, \left( 1 +   \delta   \, C_2^2
			 \right) \int_{\Sigma}   v^2    \, \e^{ - \frac{|x|^2}{4} } + \left( C \, \delta   - 2 \right) \, \int_{\Sigma} 
			 |\nabla v|^2 \, \e^{ - \frac{|x|^2}{4} } \notag \\
			 &\qquad + C \, \delta \,  \int_{\Sigma}  \left( |u|^2 + |\nabla u |^2 \right) 
			 \, \e^{ - \frac{|x|^2}{4} } \, .
\end{align}
We now choose $\delta_1 > 0$ so that $C \delta_1  \leq 1$ and $|\nabla v|^2$ term is negative.  We get that
 \begin{align}
	  \psi ' (t) + \int_{\Sigma}  |\nabla v|^2 \, \e^{ - \frac{|x|^2}{4} }  &\leq 
 	 C \, \left( 1 +  \delta   \, C_2^2 
			 \right)  \psi (t)  + C \,  \delta   \, \int_{\Sigma}  \left( |u|^2 + |\nabla u |^2 \right) 
			 \, \e^{ - \frac{|x|^2}{4} } \, .
\end{align}
To simplify notation, set $\kappa =  C \, \left( 1 +  \delta   \, C_2^2 
			 \right)$, so we get the differential inequality
 \begin{align}
 	\left( \e^{-\kappa\, t} \, \psi  \right)' + \int_{\Sigma}  |\nabla v|^2 \, \e^{ - \frac{|x|^2}{4} }   \leq 
	  C \, \delta  \, 
	   \int \left(    \left|u  \right|^2 + \left| \nabla u  \right|^2 \right) \, \e^{ - \frac{|x|^2}{4} }    \, ,
\end{align}
Integrating this up in time and using that $\psi (0) = 0$ gives
 \begin{align}
 	   \e^{-\kappa} \, \sup_{s\in [0,1]}   \psi   (s) + \int_0^1 \int_{\Sigma}  |\nabla v|^2 \, \e^{ - \frac{|x|^2}{4} } 
	  \leq 
	  C \, \delta \, 
	  \int_0^1  \int_{\Sigma}  \left(    \left|u  \right|^2 + \left| \nabla u  \right|^2 \right) \, \e^{ - \frac{|x|^2}{4} }    \, .
\end{align}
Using Lemma \ref{l:L2bQ} to bound the right-hand side gives 
 \begin{align}	\label{e:L2vbound}
 	   \e^{-\kappa} \, \sup_{s\in [0,1]}   \psi   (s) + \int_0^1 \int_{\Sigma} \left\{ v^2 + |\nabla v|^2\right\}  \, \e^{ - \frac{|x|^2}{4} } 
	  \leq 
	  C \, \delta \, 
	  \int_{\Sigma} |u (\cdot , 0)|^2 \, \e^{ - \frac{|x|^2}{4} }   \, .
\end{align}

We turn next to proving a $W^{1,2}$ on $v$ at time $1$.  Define $\chi (t)$ to be the $L^2$ norm squared of $\nabla v$ at time $t$
\begin{align}
	\chi (t) =\int_{\Sigma}  \left|  \nabla  v (x,t) \right|^2 \, \e^{ - \frac{|x|^2}{4} } \, .
\end{align}
Using the space-time $L^2$ bound on $|\nabla v|$ that we have already, there exists $t_0 \in [1/2 , 1]$ with
\begin{align}	\label{e:chit0}
	\chi (t_0) \leq C \, \delta \, \int_{\Sigma} |u (\cdot , 0)|^2 \, \e^{ - \frac{|x|^2}{4} }  \, .
\end{align}
To get the bound on $\chi (1)$, we will bound the integral of $\chi'$ from $t_0$ to $1$.

The divergence theorem and the equation $v_t = \cL v + (|A|^2 + 1/2) v + \cQ(u_1) - \cQ(u_2)$ give
\begin{align}
	\frac{1}{2} \, \chi' (t) &=\int_{\Sigma} \langle \nabla v_t   ,   \nabla  v \rangle \, \e^{ - \frac{|x|^2}{4} } = 
	- \int_{\Sigma}  v_t   \,  \cL  v   \, \e^{ - \frac{|x|^2}{4} } \notag \\
	&=  - \int_{\Sigma}   \cL  v \left\{   \cL v + (|A|^2 + 1/2) v + \cQ(u_1) - \cQ(u_2) \right\}   \, \e^{ - \frac{|x|^2}{4} }  \\
	&=   \int_{\Sigma}     \left\{   (|A|^2 + 1/2 ) |\nabla v|^2 + v \langle \nabla |A|^2 , \nabla v \rangle
	 - \cL v \left[ \cQ(u_1) - \cQ(u_2) \right]   - (\cL v)^2  \right\}   \, \e^{ - \frac{|x|^2}{4} }
	\, . \notag
\end{align}
Bounding the first two terms on the right in terms of $v^2$ and $|\nabla v|^2$ and using an absorbing inequality gives
\begin{align}
	 \chi' (t) &\leq    \int_{\Sigma}     \left\{   c \,  |\nabla v|^2 + v^2
	 + \left[ \cQ(u_1) - \cQ(u_2) \right]^2  \right\}   \, \e^{ - \frac{|x|^2}{4} }
	\, . \notag
\end{align}
where the constant $c$ depends only $\| |A|^2 \|_{C^1}$.     The last part of Lemma \ref{l:cQLip}
gives    
\begin{align}	 
	\left| \cQ(u_1) - \cQ(u_2) \right|^2 &\leq C( \delta + \sup_i | \Hess_{u_i} (\cdot , t)|^2) \,   \left( |u|^2 + |\nabla u |^2 \right)  + C \, \delta^2 \,  |\Hess_u |^2 \, .
\end{align}
  Interior (in time) parabolic Schauder estimates and interpolation give $\epsilon > 0$ so that 
  \begin{align}
  	 \sup_{t \in [1/2,1]} | \Hess_{u_i} (\cdot , t)|^2 \leq C \, \delta^{\epsilon} \, .
\end{align}
Thus, we get that
\begin{align}
	\int_{t_0}^1 \chi'(t) \, dt  &\leq  C \, 
		\int_{t_0}^1 \int_{\Sigma} \left\{ \delta^{\epsilon} \,  \left( u^2 + |\nabla u|^2 \right)  + \delta^2 \, |\Hess_u |^2  + \left( v^2 + |\nabla v|^2 \right) \right\} \, \e^{ - \frac{|x|^2}{4} }  \notag \\
		&\leq C \, \delta^{\epsilon} \,  \int_{\Sigma} |u (\cdot , 0)|^2 \, \e^{ - \frac{|x|^2}{4} }   \, ,  
\end{align}
where the last inequality used Lemma \ref{l:L2bQ} and   \eqr{e:L2vbound}.   Combining this with \eqr{e:chit0} completes the proof.
\end{proof}

\section{The action of the rotation group}

The rotation group $\cR$ acts on $u \in E$ by applying a Euclidean rotation to the graph of $u$ over $\Sigma$.   If the rotation is far from the identity (and $\Sigma$ is not a sphere), then the new hypersurface may no longer be written as a graph over $\Sigma$ and, in particular, the action does not necessarily preserve $E$.

In this section, we will show that the action of the rotation group $\cR$ satisfies the properties ($\cR 0$)--($\cR 2$).  The first property ($\cR 0$) is automatic since rotations preserve both the geometry and the Gaussian weight.  The other two properties require some work.  Properties ($\cR 1$) and ($\cR 2$) are given by the next proposition:

\begin{Pro}	\label{p:cRact}
Given $\epsilon > 0$, there exists $\delta > 0$ so that if $u,v \in E$ and $g \in \cR$ satisfy
\begin{align}
	\|u\|_E , \|v\|_E , \|g(u)\|_E   < \frac{\delta}{3} \, , 
\end{align}
then $\|g(v)\|_E < \delta$ and
\begin{align}
	\|g(v) - g(u)\|_Q &\leq (1+\epsilon) \, \|v-u\|_Q \, , \label{e:gyx1} \\
	\|v_1 - u_1\|_Q &\leq \|(g(v)-g(u))_1 \|_Q + \epsilon \, \|v-u\|_Q \, .  \label{e:gyx2} 
\end{align}
\end{Pro}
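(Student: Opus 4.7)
The plan is to reduce to the case where $g$ is near the identity by factoring out the stabilizer $\cR_\Sigma := \{g \in \cR : g(\Sigma) = \Sigma\}$, and then to Taylor-expand the action of a near-identity rotation in its infinitesimal generator. The hypothesis $\|u\|_E, \|g(u)\|_E < \delta/3$ forces both $\Sigma_u$ and $g(\Sigma_u) = \Sigma_{g(u)}$ to be $C^{2,\alpha}$-close to $\Sigma$, hence so is $g(\Sigma)$. Since $\cR$ is a compact Lie group and $\cR_\Sigma$ a closed subgroup, there exists $g_0 \in \cR_\Sigma$ with $\dist_\cR(g, g_0) \le \omega(\delta)$, where $\omega(\delta) \to 0$ as $\delta \to 0$. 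Because $g_0$ is a Euclidean isometry preserving $\Sigma$ and the Gaussian weight $\e^{-|x|^2/4}$, its induced action on $E$ is pullback by $g_0^{-1}|_\Sigma$: an exact $Q$-isometry that commutes with $L$ and therefore preserves both the splitting $E = E_1 \oplus E_2$ and the projection $P_1$. Replacing $g$ by $h := g_0^{-1} g$ leaves both sides of \eqref{e:gyx1} and \eqref{e:gyx2} invariant, so it suffices to treat the case $g = h$ with $h$ within distance $\omega(\delta)$ of the identity in $\cR$.

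Writing $h = \exp(X)$ for a small skew-symmetric $X$, a quantitative implicit function theorem in a fixed tubular neighborhood of $\Sigma$ yields, for each $w \in E$ of small $C^{2,\alpha}$-norm, a diffeomorphism $\phi^w_h : \Sigma \to \Sigma$ with $\|\phi^w_h - \mathrm{id}\|_{C^{2,\alpha}} = O(\|X\|)$ and
\begin{equation*}
h(w)(p) = w(\phi^w_h(p)) + \langle X p, \nn(p)\rangle + R_h(w)(p), \qquad \|R_h(w)\|_{C^{2,\alpha}} = O(\|X\|^2 + \|X\|\, \|w\|_{C^{2,\alpha}}).
\end{equation*}
This gives $\|h(v)\|_E < \delta$ at once, as soon as $\omega(\delta) \ll \delta$. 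Subtracting, the inhomogeneous term $\langle Xp, \nn\rangle$ cancels and the mean value theorem converts $v \circ \phi^v_h - v \circ \phi^u_h$ into an error of size $O(\|X\| \, \|v\|_{C^2})\, \|v - u\|_{C^0}$, yielding
\begin{equation*}
\|h(v) - h(u) - (v - u) \circ \phi^u_h\|_Q \;\le\; O(\|X\|) \, \|v - u\|_Q .
\end{equation*}
Pullback by the near-identity diffeomorphism $\phi^u_h$ distorts the Gaussian $W^{1,2}$-norm (equivalent to $Q$) by at most a factor $1 + O(\|X\|)$; choosing $\delta$ small enough that the combined distortion is at most $\epsilon$ yields \eqref{e:gyx1}. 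Applying the continuous projection $P_1 : E \to E_1$ (Lemma \ref{e:fd}) to the same expansion gives $\|(h(v) - h(u))_1 - (v - u)_1\|_Q \le \epsilon \, \|v - u\|_Q$, and then the triangle inequality produces \eqref{e:gyx2}.

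The main technical obstacle is passing from the $C^{2,\alpha}$ regularity in which $E$ and the hypotheses are formulated to the one-derivative $Q$-norm in the conclusion. The expansion of $h(w)$ and its derivatives in terms of $X$ and $w$ is naturally $C^{2,\alpha}$-valued, but the remainder $h(v) - h(u) - (v-u) \circ \phi^u_h$ must be controlled in $W^{1,2}$ in terms of $\|v-u\|_Q$ alone (not $\|v-u\|_{C^1}$ or higher). The way out is that $\phi^w_h$ is uniformly $C^{2,\alpha}$-small on the $C^{2,\alpha}$-ball of radius $\delta$, so the change of variables with Gaussian weight transfers the distortion to the $Q$-norm with a multiplicative constant $1 + O(\|X\|)$ that tends to one as $\delta \to 0$, which is exactly what is needed to beat any preassigned $\epsilon$.
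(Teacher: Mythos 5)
Your route is genuinely different from the paper's, which never reduces to a near-identity rotation. The paper introduces the intrinsic objects $L_u$, $Q_u$, $\Pi^u_1$ living on the graph $\Sigma_u$ --- these are invariant under every rotation about the origin, since a rotation preserves both the induced geometry and the Gaussian weight --- and then argues in three steps: write $\Sigma_v$ as a normal graph of a function $w$ over $\Sigma_u$ via Lemma~\ref{l:graph}; compare $\|w\|_{Q_u}$ and $\|\Pi^u_1 w\|_{Q_u}$ to $\|v-u\|_Q$ and $\|v_1-u_1\|_Q$ using the pointwise estimate $|w-(v-u)| \le C\delta^2 |v-u|$; observe that $w$ and the $Q_u$-data are literally unchanged by applying $g$; and then run Lemma~\ref{l:graph} in reverse to return to $g(v)-g(u)$. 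No Lie-theoretic decomposition or Taylor expansion of $\exp(X)$ appears, and no smallness of $g$ is ever invoked. Your reduction through $\cR_\Sigma$ and the observation that $\cR_\Sigma$ acts by pullback, hence $Q$-isometrically and commuting with $L$, $E_1$, $E_2$, $P_1$, is correct, and the invariance of \eqref{e:gyx1}, \eqref{e:gyx2} under replacing $g$ by $g_0^{-1}g$ holds for exactly the reasons you give; so the reduction to $h$ near the identity is sound. It trades the paper's invariance argument for a very concrete computation.

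The gap is in the remainder estimate. You assert a \emph{size} bound $\|R_h(w)\|_{C^{2,\alpha}} = O(\|X\|^2 + \|X\|\,\|w\|_{C^{2,\alpha}})$, but the conclusion you need, $\|h(v)-h(u)-(v-u)\circ\phi^u_h\|_Q \le O(\|X\|)\,\|v-u\|_Q$, requires in particular a \emph{Lipschitz-in-$w$} estimate $\|R_h(v)-R_h(u)\|_Q \lesssim \|X\|\,\|v-u\|_Q$. The size bound only yields the additive error $O(\|X\|\delta)$, which does not vanish as $\|v-u\|_Q \to 0$ and therefore cannot produce the $(1+\epsilon)$ multiplicative factor in \eqref{e:gyx1} or the $\epsilon\,\|v-u\|_Q$ term in \eqref{e:gyx2}. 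This is precisely the role that the pointwise Lipschitz estimate of Lemma~\ref{l:graph} plays in the paper's argument; your implicit-function-theorem step must be strengthened to deliver the analogous Lipschitz dependence of $R_h$ on $w$. A second, smaller omission: passing from the pointwise mean-value bound on $v\circ\phi^v_h - v\circ\phi^u_h$ to the $Q$-norm (which is a Gaussian $W^{1,2}$ norm) also requires controlling the gradient of this difference, whose bound involves $\nabla^2 v$ and $\nabla(v-u)$; this works out because $\|v\|_{C^2} \lesssim \delta$ so the $\nabla^2 v$ coefficient is small, but it needs to be said.
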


The proposition will follow easily from the next lemma that writes the graph of $v$ as a normal graph over the graph of $u$.

\begin{Lem}	\label{l:graph}
There exist $C$ and $\delta_0 > 0$ so that if $u , v \in E$ satisfy $\|u\|_E , \|v\|_E < \delta_0$, then the graph of $v$ can be written as a normal graph over the graph of $u$ of a function $w$ satisfying the pointwise estimate for $p \in \Sigma$
\begin{align}	\label{e:graph}
	|w(p) - (v-u)(p)| \leq C \, \delta_0^2 \, |(v-u)(p)| \, .
\end{align}
\end{Lem}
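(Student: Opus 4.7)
The plan is to construct $w$ via the implicit function theorem and then extract the pointwise estimate from a first-order Taylor expansion of the defining equation.

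First I would parametrize $\Sigma_u$ by $\Phi_u(p) = p + u(p)\nn(p)$ and for $(p,t,\xi) \in \Sigma \times \RR \times T_p\Sigma$ consider
\begin{equation}
F(p, t, \xi) = \Phi_u(p) + t\,\nn_u(\Phi_u(p)) - \exp_p(\xi) - v(\exp_p(\xi))\,\nn(\exp_p(\xi)) \, .
\end{equation}
At $u = v = 0$ one has $F(p, 0, 0) = 0$, and the differential of $F$ in $(t, \xi)$ is the map $(t, \xi) \mapsto t\,\nn(p) - \xi$, which is an isomorphism $\RR \oplus T_p\Sigma \to \RR^{n+1}$. Since the coefficients of $F$ depend smoothly on $u,v$ in the $C^1$ topology and $\Sigma$ is compact, applying the implicit function theorem uniformly in $p$ produces $\delta_0 > 0$ and unique small $(w(p),\xi(p))$ solving $F = 0$ whenever $\|u\|_E, \|v\|_E < \delta_0$, with $|w(p)| + |\xi(p)| \leq C\delta_0$. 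Setting $q(p) = \exp_p(\xi(p))$ then realizes $\Sigma_v$ as a normal graph of $w$ over $\Sigma_u$.

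For the pointwise estimate I would Taylor-expand $F=0$ using the standard identities $\nn_u\circ\Phi_u = \nn - \nabla^\Sigma u + R$ with $|R| \leq C(|u|^2 + |\nabla u|^2) \leq C\delta_0^2$, together with the expansions $\exp_p(\xi) = p + \xi + O(|\xi|^2)$, $\nn(\exp_p(\xi)) = \nn(p) + O(|\xi|)$, and $v(\exp_p(\xi)) = v(p) + \langle \nabla v, \xi\rangle + O(|\xi|^2)$. Projecting $F = 0$ onto $T_p\Sigma$ and $\nn(p)$ and using $\nabla^\Sigma u \in T_p\Sigma$ gives
\begin{align}
\xi &= -w\,\nabla^\Sigma u + O\bigl(\delta_0^2 |w| + \delta_0 |\xi| + |\xi|^2\bigr) \, , \\
w + u(p) &= v(p) + \langle \nabla v, \xi\rangle + O\bigl(\delta_0^2 |w| + |\xi|^2\bigr) \, .
\end{align}
The first equation yields $|\xi| \leq C\delta_0 |w|$ after absorbing; substituting into the second gives
\begin{equation}
w\bigl(1 + \langle \nabla v, \nabla^\Sigma u\rangle\bigr) = (v-u)(p) + O(\delta_0^2 |w|) \, .
\end{equation}
Since $|\langle \nabla v, \nabla^\Sigma u\rangle| \leq C\delta_0^2$, this first implies $|w(p)| \leq 2|(v-u)(p)|$ for $\delta_0$ small, and feeding that back gives the claim $|w(p) - (v-u)(p)| \leq C\delta_0^2 |(v-u)(p)|$.

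The main technical point will be arranging the bookkeeping of the remainder terms so that the final error is homogeneous in $(v-u)(p)$ rather than merely in $\delta_0$. Conceptually this is forced by the observation that at any $p$ with $u(p) = v(p)$ the point $\Phi_u(p) = \Phi_v(p)$ lies on both graphs, so that $w(p) = 0$ by uniqueness in the implicit function theorem; the task is simply to verify that this cancellation is captured after all cross-terms between $\xi$ and $\nabla v, \nabla^\Sigma u$ are tracked.
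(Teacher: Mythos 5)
Your proof is correct and follows essentially the same strategy as the paper: write the normal-graph-over-graph equation, project onto tangent and normal directions at $p$, deduce a bound $|\xi|\lesssim \delta_0|w|$ from the tangent part, and substitute into the normal part to bootstrap the estimate. The only superficial differences are that you parametrize the foot point on $\Sigma$ via $\exp_p(\xi)$ rather than a point $q\in\Sigma$ directly, and you are slightly more explicit about existence via the implicit function theorem, which the paper asserts without comment.
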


\begin{proof}
Since $u,v$ and their gradients are small and $A$ is bounded on $\Sigma$ and both graphs, we  
get the existence of a normal graph function $w$.  The point is to establish the estimate \eqr{e:graph}.  

Following the appendix in \cite{CM3}, define the mapping $B(p,s)= I - s \, A_p$ on the tangent space to $\Sigma$ at $p$ so that the vector field 
\begin{align}
	V(p) \equiv \nn (p) - B^{-1}(p,u(p)) (\nabla u(p))
\end{align}
 is normal{\footnote{Note that $V(p)$ is not the unit normal, but its norm is one up to higher order corrections.}}
  to the graph of $u$ at the point $p + u(p) \, \nn(p)$.  The function $w(p)$ is the length of the segment that leaves $p + u(p) \, \nn(p)$ in the direction $V(p)$ and intersects the graph of $v$.  Define $q=q(p) \in \Sigma$ to be the point so that the segment ends at $q + v(q) \, \nn(q)$.  We are looking to solve for $q,s$ satisfying
 \begin{align}	\label{e:spq}
 	p+u(p) \, \nn(p) + s\, V(p) = q+ v(q) \, \nn (q) \, .
 \end{align}
 We can rewrite this as
  \begin{align}	\label{e:spq2}
 	p-q &+ s \, B^{-1}(p,u(p)) (\nabla u(p))  = v(q) \, \nn (q) - u(p) \, \nn(p) - s \, \nn (p) \\
	&= (v(q)-v(p)) \, \nn (q) +v(p) \,(\nn(q)- \nn(p)) + (v(p)  - u(p) -s)\, \nn(p)   \, .  \notag
 \end{align}
 Taking the tangent (at $p$) part of these vectors, we see that
 \begin{align}
 	|(p-q)^T| \leq C \, \delta_0^2 \, s+ C \, \delta_0 \, |p-q| \, .
 \end{align}
 Since $p$ and $q$ are close in $\Sigma$, the difference is almost tangent and we conclude that
  \begin{align}	\label{e:pqclose}
 	|p-q| \leq C \, \delta_0^2 \, s \, .
 \end{align}
  This time we take the normal part in \eqr{e:spq2} to get
    \begin{align} 
    	\left|  (v(p)  - u(p) -s) \right| &\leq 
 	|\langle (p-q), \nn (p) \rangle| +  |v(q)-v(p)| + |v(p)| \,|\nn(q)- \nn(p)|  
	\, .   
 \end{align}
 Using \eqr{e:pqclose} on the right hand side gives $\left|  (v(p)  - u(p) -s) \right| \leq C \, \delta_0^2 \, s$.  The claim follows easily from this.
\end{proof}

\begin{proof}[Proof of Proposition \ref{p:cRact}]
Let $U$ be a neighborhood  of $\Sigma$ (to be chosen).   Given $u\in U$, let $L_u$ be the second variation operator on $\Sigma_u$, $Q_u$ the induced Gaussian 
$W^{1,2}$ inner product (that makes $L_u$ self-adjoint),  $ \mu_u$ be largest eigenvalue of $L_u$ that is less than $-1$, and $E_u^1$ the span of the eigenfunctions of $L_u$ with eigenvalues less than $-1$.
 Let   $\Pi^u_1$ be $Q_u$-orthogonal projection to $E^1_u$.  Moreover,  $\mu_u$,  $E^1_u$, and $\Pi^u_1$   are continuous{\footnote{The continuity of the union of eigenspaces relies on the gap to $-1$.}}  as long as $U$ is small enough.
 
 The proposition now follows in three steps.  First, Lemma \ref{l:graph} gives a function $w$ so that the graph of $v$ is a normal graph of $w$ over the graph of $u$ and $|w-(v-u)| \leq C \, \delta^2 \, |v-u|$ pointwise.  It follows that
 \begin{align}	\label{e:qz1}
	\left|	\|w\|_{Q_u}-  \|v-u \|_Q \right| &\leq (1+\epsilon_1) \, \|v-u\|_Q \, , \\
	\|v_1 - u_1\|_Q &\leq \|\Pi^u_1 (w) \|_{Q_u} + \epsilon_1 \, \|v-u \|_Q \, .   \label{e:qz2}
\end{align}
If we now apply $g$ to $u$ and $v$, then the graph of $g(v)$ is a normal graph over the graph of $g(u)$ of the same function $w$.  To be precise, the function $w$ is unchanged on the underlying manifold (the graph of $u$, which is isometric to the graph of $g(u)$), but there is a new identification between points in the graph and points in $\Sigma$. The operator $L_u$ is also preserved by the action of $g$ (we use here that $g$ is a rotation about the origin, so it also preserves the Gaussian weight).  It follows that \eqr{e:qz1} and \eqr{e:qz2} hold with $Q_u$ replaced by $Q_{g(u)}$ and $\Pi^u_1$ replaced by
$\Pi^{g(u)}_1$.
   We can now apply Lemma \ref{l:graph} in the reverse direction 
to relate  $w$ and $g(v) - g(u)$, completing the proof.
\end{proof}

\section{Hypersurfaces modulo translations, dilations and rotations}	\label{s:groupo}

In this section, we will complete the proof of the main theorem by analyzing an equivalent dynamical system that mods out the action by dilations and translations.  The rescaled MCF is the gradient flow for the $F$ functional and, thus, builds in a choice of a center and scale.  Following \cite{CM1},  
 the entropy $\lambda$ of a hypersurface $M \subset \RR^{n+1}$ mods out for this choice    by taking the supremum of   Gaussian areas over all possible centers and scales
\begin{align}	\label{e:entrop}
	\lambda (M) = \sup_{(x_0 , t_0) \in \RR^{n+1} \times \RR^+ } \, \, F_{x_0 , t_0} (M) \, , 
\end{align}
where $F_{x_0 , t_0} (M) $ is the Gaussian area with center $x_0$ and scale $t_0$ given by
\begin{align}
	F_{x_0 , t_0} (M) = \left( 4 \, \pi \, t_0 \right)^{ - \frac{n}{2} } \, \int_M \e^{ - \frac{ |x-x_0|^2}{4t_0} } \, .
\end{align}

We will say that $M$ is {\emph{balanced}} if its entropy is equal to its $F=F_{0,1}$ functional. 
 By lemma $7.10$ in \cite{CM1}, any shrinker is automatically balanced.
 Let $\Gamma \subset U_0$ be the set of balanced graphs, i.e., $u \in U_0$ is in $\Gamma$ if the graph $\Sigma_u$ satisfies the balancing condition
\begin{align}	\label{e:balance}
	F(\Sigma_u) = \lambda (\Sigma_u) \, .
\end{align}

Heuristically, the way to mod out for translations and dilations would be to look at the gradient flow for $\lambda$.  However, $\lambda$ is not in general differentiable
since it is given as a supremum.  To get around this, 
we will   analyze the dynamics on $\Gamma$.  The key idea is that each graph $\Sigma_u$ nearby $\Sigma$ has a unique center of mass and scale that achieve its entropy.  Thus, there is a canonical translation and dilation that ÒbalancesÓ it to have center $0$ and scale $1$ and this ``balancing mapÕÕ $\gamma$ depends smoothly on $u$. 
When we relate this to the original dynamics, it will be crucial that the balancing map commutes with rescaled MCF.

\subsection{The balancing map}
 
Let $\cG$ be the group generated by translations and dilations of $\RR^{n+1}$.  The group $\cG$ can be parameterized by $(y,h) \in \RR^{n+1} \times \RR^+$, where we associate $(y,h)$ to the map $g_{y,h}$ given by
\begin{align}
	g_{y,h} (p) = h(p) + y \, .
\end{align}
Let $\cg \subset E_2$ be the linear space of translation and dilation vector fields
\begin{align}
	\cg = \{ y^{\perp} + b \, x^{\perp} \, | \, y \in \RR^{n+1} , b \in \RR \} \, .
\end{align}
 The translations   lie in the $-\frac{1}{2}$ eigenspace of $L$ while the dilations are in the $-1$ eigenspace.  Let $\cg^{\perp}$ denote the orthogonal complement of $\cg$ with respect to the inner product $Q$.

The next proposition gives a ``balancing map'' $\gamma$ that maps each graph near $\Sigma$ to a nearby balanced graph,  
does so in a $Q$-Lipschitz way, and is the identity on $\Gamma$.   
Let $\cT$ be the $Q$-orthogonal projection from $E$ to $\cg^{\perp}$.  Obviously, $\cT$ is linear.  

\begin{Pro}	\label{t:balancing}
There exists $\delta_c > 0$, $C$, and a map $\gamma: B_{\delta_c} \subset E \to B_{2\delta_c} \cap \Gamma$ so that $\gamma$ is the identity on $\Gamma$ and if $\delta \leq \delta_c$ and
$u,v \in B_{\delta} \subset E$, then 
\begin{align}	\label{e:QLipgamma}
	\| (\gamma (u) - \cT (u)) - (\gamma (v) - \cT (v)) \|_Q \leq C\, \delta \, \| u - v \|_Q \, .
\end{align}
\end{Pro}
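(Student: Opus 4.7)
The plan is to construct $\gamma$ via an implicit function theorem argument that locates the unique entropy-attaining scale and center for each graph near $\Sigma$, and then to derive the $Q$-Lipschitz bound \eqr{e:QLipgamma} from the identity $d\gamma|_0 = \cT$ combined with smoothness of $\gamma$.

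First, for each $u \in E$ near $0$ I locate the $(x_0, t_0)$ that attains the entropy of $\Sigma_u$. Define
\[
	G(u, x_0, t_0) = \bigl( \nabla_{x_0} F_{x_0, t_0}(\Sigma_u),\; \partial_{t_0} F_{x_0, t_0}(\Sigma_u) \bigr) \in \RR^{n+1} \times \RR.
\]
Because $\Sigma$ is a shrinker balanced at $(0,1)$, $G(0,0,1) = 0$. The $(x_0,t_0)$-derivative of $G$ at $(0,0,1)$ is the Hessian at $(0,1)$ of the smooth function $(x_0, t_0) \mapsto F_{x_0, t_0}(\Sigma)$, and a direct computation on the shrinker shows this Hessian is strictly negative definite. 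The implicit function theorem then produces smooth maps $y : E \to \RR^{n+1}$ and $h : E \to \RR^+$ with $y(0)=0$, $h(0)=1$, and $G(u, y(u), h(u)) = 0$ for $\|u\|_E$ small. Define $\gamma(u) \in E$ to be the function whose graph over $\Sigma$ is $g_{y(u), h(u)}^{-1}(\Sigma_u)$; for $\|u\|_E$ small enough this is indeed a graph, and by continuity $\|\gamma(u)\|_E \leq 2\delta_c$. By construction $\Sigma_{\gamma(u)}$ has entropy attained at $(0,1)$, so $\gamma(u) \in \Gamma$. If $u \in \Gamma$, the pair $(0,1)$ already solves the IFT equation, so uniqueness gives $\gamma(u) = u$.

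Second, I claim $d\gamma|_0 = \cT$. On the one hand, $\gamma$ is constant along the $\cG$-orbit through $\Sigma$, whose tangent space at $0$ is $\cg$, so $d\gamma|_0$ annihilates $\cg$. On the other hand, $\gamma$ is the identity on $\Gamma$, whose tangent space at $0$ equals the kernel of $d_u G(0,0,1)$. Linearizing $G$ identifies this kernel as the subspace of $E$ orthogonal (in Gaussian $L^2$, equivalently in $Q$ since $\cg$ is spanned by $L$-eigenfunctions) to the translation eigenfunctions $\langle y, \nn \rangle$ and the dilation eigenfunction $\langle x, \nn \rangle = 2H$; that is, exactly $\cg^\perp$. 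Hence $d\gamma|_0$ is zero on $\cg$ and the identity on $\cg^\perp$, so it is the $Q$-orthogonal projection $\cT$.

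Third, set $R(u) = \gamma(u) - \cT u$, so that $R(0) = 0$ and $dR|_0 = 0$. The fundamental theorem of calculus gives
\[
	R(u) - R(v) = \int_0^1 dR\big|_{v + t(u-v)}(u-v)\, dt,
\]
so \eqr{e:QLipgamma} follows once we show that $\|dR|_w\|_{Q \to Q} \leq C \, \|w\|_E$ for $\|w\|_E \leq \delta$. Since $\gamma$ is the composition of the smooth IFT map $u \mapsto (y(u), h(u))$, the $\cG$-action on hypersurfaces, and the re-identification of the resulting surface as a graph over $\Sigma$, and since $dR|_0 = 0$, the $Q$-operator norm of $dR|_w$ is controlled by $\|w\|_E$ with a uniform constant.

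The main obstacle is making the last step precise: transferring the smoothness of $\gamma$ as a map $E \to E$ into a $Q$-operator norm bound on $dR|_w$. The concrete estimate comes from the analogues for translations and dilations of the pointwise reparametrization bound in Lemma \ref{l:graph} and the $Q$-comparison in Proposition \ref{p:cRact}; together these provide $Q$-Lipschitz control of the group action on $E$-balls with exactly the factor $\delta$ needed in \eqr{e:QLipgamma}.
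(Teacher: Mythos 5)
Your construction of $\gamma$ mirrors the paper's: apply the implicit function theorem to the $(x_0,t_0)$-gradient of $F_{x_0,t_0}(\Sigma_u)$ to locate the entropy-attaining center and scale, then undo the corresponding translation/dilation. Your argument that $d\gamma|_0 = \cT$ — via the geometric observations that $\gamma$ kills the $\cG$-orbit direction $\cg$ and restricts to the identity on $T_0\Gamma = \cg^\perp$ — is a slightly different but valid route to the same identification. So the architecture is essentially right.

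The genuine gap is in the last step, and it is precisely the point you flag as ``the main obstacle'' without resolving. You want $\|dR|_w\|_{Q\to Q} \leq C\|w\|_E$, and you argue this should follow because $\gamma$ is a smooth composition and $dR|_0 = 0$. But smoothness of $\gamma : E \to E$ controls the derivative in the $C^{2,\alpha}$ operator norm, which is strictly stronger than the $Q$ ($\approx$ Gaussian $W^{1,2}$) operator norm and does not imply it. The dangerous piece is not the group action or the graph reparametrization (Lemma \ref{l:graph} and Proposition \ref{p:cRact} do control those in $Q$), but the IFT map $u \mapsto (y(u),h(u))$ itself: a priori its linearization $v \mapsto d(y,h)|_w(v)$ could involve $\Hess_v$, which is \emph{not} bounded by $\|v\|_Q$. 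The paper closes this by observing (equations \eqr{e:bF}, \eqr{e:bFu}) that $\bF = \nabla_{(x_0,t_0)}F_{x_0,t_0}(\Sigma_u)$ depends only on $u$ and $\nabla u$, so $d_u\bF(v)$ is an integral pairing against $v$ and $\nabla v$ only; this is what gives property ($\rho 2$), $|T_\rho(v)| \leq C\|v\|_{W^{1,2}}$, and then ($\rho 3$), the quantitative $W^{1,2}$-Lipschitz approximation of $\rho$ by $T_\rho$. Without this structural observation about $\bF$, the passage from $E$-smoothness to the $Q$-Lipschitz bound \eqr{e:QLipgamma} is not justified. You also implicitly need the same observation to get $T_\rho(v)=0$ for $v \in \cg^\perp$, which underlies your claim that $\ker d_u G(0,0,1) = \cg^\perp$; as stated you reference the Gaussian $L^2$ pairing but do not verify that the linearization of $G$ is represented by such a pairing.
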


 \subsection{Center of mass and the proof of Proposition \ref{t:balancing}}
 
 The next lemma shows that the optimal center and scale in \eqr{e:entrop} is Lipschitz continuous with respect to the Gaussian $W^{1,2}$ norm.  
 The distance on 
 $\RR^{n+1} \times \RR^+$ is defined to be
 \begin{align}
 	\| (x_0 , t_0) - (y_0 , s_0) \|_{\RR^{n+1} \times \RR^+} \equiv |x_0 - y_0| + |\log t_0 - \log s_0 | \, .
 \end{align}
 The linear map $T_{\rho}$ in the lemma is the linearization (or derivative) of $\rho$.

 \begin{Lem}	\label{p:COM}
 There exists $\delta_b > 0$, $C$,  a map $\rho: B_{\delta_b} \subset  E \to \RR^{n+1} \times \RR^+$ and a linear map $T_{\rho}: E \to \RR^{n+1} \times \RR$  so that:
 \begin{enumerate}
 \item[($\rho 1$)]  $\lambda (\Sigma_u) = F_{\rho (u)} (\Sigma_u)$ and, thus, $\rho = (0,1)$ on $\Gamma$.
 \item[($\rho 2$)]  If $v \in W^{1,2}$, then $| T_{\rho} (v)| \leq C \, \| v \|_{W^{1,2}}$.  If $v \in \cg^{\perp}$, then $T_{\rho} (v) = 0$.
 \item[($\rho 3$)]   If $\delta < \delta_b$ and $u,v \in B_{\delta} \subset  E$, then
 \begin{align}
 	\| \rho (u) - \rho (v) - T_{\rho}(u-v) \|_{\RR^{n+1} \times \RR^+} \leq C \, \delta \, \| u -v \|_{W^{1,2}} \, .
 \end{align}
 \end{enumerate}
 \end{Lem}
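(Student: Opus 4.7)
The plan is to construct $\rho$ via the implicit function theorem applied to the critical point equation for $(x_0, t_0) \mapsto F_{x_0, t_0}(\Sigma_u)$, and then extract ($\rho 2$) and ($\rho 3$) from implicit differentiation. Set
\begin{equation*}
	G(u, x_0, t_0) = \nabla_{(x_0, t_0)} F_{x_0, t_0}(\Sigma_u) \in \RR^{n+1} \times \RR,
\end{equation*}
so that solving $G(u, \rho(u)) = 0$ produces $\rho$.  Because $\Sigma$ is a shrinker it is balanced (lemma $7.10$ in \cite{CM1}), and hence $G(0,0,1) = 0$.

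The main analytic input is non-degeneracy of $\partial_{(x_0, t_0)} G(0,0,1)$, i.e., the Hessian of $F_{x_0, t_0}(\Sigma)$ at $(0,1)$.  The change of variables $y = (x-x_0)/\sqrt{t_0}$ gives $F_{x_0, t_0}(\Sigma) = F\bigl((\Sigma - x_0)/\sqrt{t_0}\bigr)$, so this Hessian equals $\Hess_F$ evaluated on the translation and dilation normal fields $\langle v, \nn\rangle$ and $H$.  These are $L$-eigenfunctions with eigenvalues $\tfrac{1}{2}$ and $1$, so $\Hess_F(w,w) = -\int w L w \,\e^{-|x|^2/4}$ gives strictly negative diagonal entries (nonzero because a closed embedded shrinker admits no translation symmetry and has $H \not\equiv 0$), while $L^2(\e^{-|x|^2/4})$-orthogonality of distinct $L$-eigenspaces kills the cross terms.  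The implicit function theorem then produces a smooth $\rho$ solving $G(u, \rho(u)) = 0$ on a small $C^{2,\alpha}$-ball.

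To obtain ($\rho 1$) one must verify that $\rho(u)$ attains $\lambda(\Sigma_u)$, not merely some other critical point.  I would argue by compactness:  $F_{x_0, t_0}(\Sigma)$ decays to zero near the boundary of $\RR^{n+1} \times \RR^+$ (as $|x_0| \to \infty$, $t_0 \to \infty$, or $t_0 \to 0$ with $x_0 \notin \Sigma$), is dominated by $1 < F(\Sigma)$ near the remaining boundary points with $x_0 \in \Sigma$, and is strictly maximized at $(0,1)$ by the previous step; small $C^{2,\alpha}$ perturbations of $u$ preserve these features uniformly, so the argmax of $F_{x_0, t_0}(\Sigma_u)$ stays in a fixed compact neighborhood of $(0,1)$ and must coincide with $\rho(u)$ by the uniqueness clause of IFT.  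For ($\rho 2$), implicit differentiation yields
\begin{equation*}
	T_\rho = -\bigl[\partial_{(x_0, t_0)} G(0,0,1)\bigr]^{-1} \circ \partial_u G(0,0,1),
\end{equation*}
whose components are, up to nonzero constants, the Gaussian $L^2$-pairings $u \mapsto \int_\Sigma u \langle e_i, \nn\rangle \,\e^{-|x|^2/4}$ and $u \mapsto \int_\Sigma u H \, \e^{-|x|^2/4}$.  These are bounded on the Gaussian $L^2$ norm, hence on $W^{1,2}$, and they vanish on $\cg^\perp$ because $\cg$ is a finite union of $L$-eigenspaces, making $Q$-orthogonality to $\cg$ equivalent to $L^2$-orthogonality to $\cg$.

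Finally ($\rho 3$) reduces to $C^2$ regularity of $\rho$ with second-derivative bounds controlled by $W^{1,2}$, via
\begin{equation*}
	\rho(u) - \rho(v) - T_\rho(u-v) = \bigl(d\rho(v) - d\rho(0)\bigr)(u-v) + O\bigl(\|u-v\|_{W^{1,2}}^2\bigr),
\end{equation*}
combined with $\|d\rho(v) - d\rho(0)\| = O(\|v\|_{W^{1,2}})$.  The subtle point, and the likely main obstacle, is this passage from $C^{2,\alpha}$ to $W^{1,2}$:  although the IFT is naturally carried out in $C^{2,\alpha}$, the derivative bounds required here must hold in the weaker Gaussian $W^{1,2}$ norm.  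Justifying this uses that $u$ enters $F_{x_0, t_0}(\Sigma_u)$ only through the graph position and area element, so derivatives of $F_{x_0, t_0}$ in $u$ up to order two are integrals of polynomials in $u$ and $\nabla u$ against bounded smooth weights, which extend boundedly to $W^{1,2}$ and are inherited by $\rho$ through the IFT formula.
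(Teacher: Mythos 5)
Your proposal follows essentially the same route as the paper: apply the implicit function theorem to $\bar{F}(x_0,t_0,u) = \nabla_{(x_0,t_0)} F_{x_0,t_0}(\Sigma_u) = 0$, use the negative-definiteness of the Hessian of $(x_0,t_0)\mapsto F_{x_0,t_0}(\Sigma)$ at $(0,1)$ (which the paper cites from section 7 of \cite{CM1} and you rederive from the $L$-eigenvalue decomposition of translation and dilation normal fields), compute $T_\rho = -d^{-1}_{(x_0,t_0)}\bar{F}\circ d_u\bar{F}$ by implicit differentiation, and obtain the $W^{1,2}$ Lipschitz bound in $(\rho 3)$ from the fact that $d_u\bar{F}$ acts by integrating $v$ and $\nabla v$ against smooth bounded coefficients. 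The paper compresses the intermediate steps by referencing sections 4 and 7 of \cite{CM1}, but the underlying argument and the flagged $C^{2,\alpha}$-versus-$W^{1,2}$ point are the same.
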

 
 \begin{proof}
The key is to examine the map 
 \begin{align}
 	G(y_0 , s_0 , u) = F_{y_0 , s_0} (\Sigma_u) \, .
\end{align}
In section $7$ of \cite{CM1}, it is proven that $(y_0 , s_0) \to G(y_0 , s_0 , 0)$ has a strict maximum at $(0,1)$ and its Hessian there is negative definite (this uses that $\Sigma$ cannot split off a line since it is compact).
 Moreover, it follows from the proof of theorem $0.15$ in \cite{CM1} (in section $7$ there) that, as long as $\delta_b > 0$ is small, the optimal center and scale in \eqr{e:entrop} can be achieved only in a small ball around $(0,1)$.    
 
  If we fix $u$, then the derivative of the map $(y_0 , s_0) \to G(y_0,s_0, u)$  is given by the vector-valued function (see lemma $3.1$ in \cite{CM1})
\begin{align}	\label{e:bF}
	\bF (x_0 , t_0 , u ) = \left( 4 \, \pi \, t_0 \right)^{ - \frac{n}{2} } \, \int_{\Sigma_u} \left\{    \left( \frac{|x-x_0|^2 - 2nt_0}{4t_0^2} \right) , \frac{x-x_0}{2t_0}
	\right\} \e^{ - \frac{ |x-x_0|^2}{4t_0} } \, .
\end{align}
Observe that we can write  $\bF$ as
\begin{align}	\label{e:bFu}
	\bF (x_0 , t_0 , u ) =   \left( 4 \, \pi \, t_0 \right)^{ - \frac{n}{2} } \int_{\Sigma}  \nu (p, u(p) , \nabla u (p)) \, \Xi (p, u(p)) \e^{ - \frac{ |x-x_0|^2}{4t_0} } \, ,
\end{align}
where $\nu$ is the relative area function from the appendix and $\Xi$ is a vector-valued function of $p$ and $u(p)$. In particular, $\bF$ depends only on $x_0 , t_0$, the value of $u$, and   $\nabla u$.  

We will use the implicit function theorem to get the map $\rho$.  To do this, we need to understand the derivative of $\bF$ both with respect to $(x_0,t_0)$ and with respect to $u$.   Since $\bF$ is itself the $(x_0, t_0)$ derivative of $G$,   it follows that the $(x_0, t_0)$ derivative $d_{(x_0,t_0)} \bF$ of $\bF$ is the  second derivative of $G$ in the $(x_0, t_0)$ direction.  Thus, since $\Sigma$ does not split off a line,  
 \cite{CM1} implies that $d_{(x_0,t_0)} \bF$ is invertible at $0$.  By continuity in $u$, $d_{(x_0,t_0)} \bF$ is invertible in a ball about $0$.  Next, if we differentiate
 \eqr{e:bFu} along a path $u+tv$, then the chain rule gives
 \begin{align}
 	\frac{d}{dt}\big|_{t=0} \, \bF (x_0 , t_0 , u+tv) &=  \left( 4 \, \pi \, t_0 \right)^{ - \frac{n}{2} } \int_{\Sigma}  \nu (p, u , \nabla u ) \, \Xi_{s} (p, u) \, v \e^{ - \frac{ |x-x_0|^2}{4t_0} } \notag \\
	&+  \left( 4 \, \pi \, t_0 \right)^{ - \frac{n}{2} } \int_{\Sigma}  \left[ \nu_s (p, u , \nabla u )\, v  +  \nu_{y_{\alpha}} (p, u , \nabla u )  \, v_{\alpha}  \right] \, \Xi (p, u) \e^{ - \frac{ |x-x_0|^2}{4t_0} }
\, .
 \end{align}
 It follows that the linear map $d_u \bF$ can be written as
 \begin{align}
 	d_u \bF(v) =  \int_{\Sigma} \zeta_1 (x_0 , t_0 , p , u (p) , \nabla u (p)) \, v +  \int_{\Sigma} \zeta_{2,\alpha} (x_0 , t_0 , p , u (p) , \nabla u (p)) \, v_{\alpha} \, , 
 \end{align}
 where $\zeta_1 , \zeta_{2,\alpha}$ are smooth vector-valued functions.
Thus, we can apply the implicit function theorem, $1.5$ in \cite{HiP}, to get $\delta_b > 0$ and a map $\rho: B_{\delta_b} \subset  E  \to \RR^{n+1} \times \RR^+$ so that
 \begin{align}
 	\bF (\rho (u) , u ) &= 0 \, , \\
	d_u \rho &= - d^{-1}_{(x_0 , t_0)} \circ d_u \bF \, .
\end{align}
Define the linear map $T_{\rho}$ to be $d_u \rho$ at $x_0 = 0$, $t_0 =1$ and $u=0$.
  If $v \in \cg^{\perp}$, then section $4$ in \cite{CM1} gives that $T_{\rho} (v)=0$, giving ($\rho 2$).  Finally,  ($\rho 3$) follows easily from the form of $d \bF$.
     \end{proof}

 \begin{proof}[Proof of Proposition \ref{t:balancing}]
 Let the map $\rho$ be given by
 Lemma \ref{p:COM}.  This induces a map 
 \begin{align}
 	\bar{\rho} :   B_{\delta_b} \subset E \to \cT \, , 
\end{align}
 where $\bar{\rho} (u)$ translates and dilates to take $\rho (u)$ to $(0,1)$.  
The balancing map $\gamma$ is then defined by letting $\bar{\rho} (u)$ act on $u$.  
Since $\rho (u) = (0,1)$ if $u \in \Gamma$,  $\gamma$ is the identity on $\Gamma$.

We will show next that $\cT$ is the linearization (or derivative) of $\gamma$ at $0$, using different arguments to compute the linearization first in the direction of $\cg^{\perp}$ and then in the direction of $\cg$.
 Property ($\rho 2$) in Lemma \ref{p:COM} implies that the linearization of $\gamma$ at $0$ is the identity on $\cg^{\perp}$.   Since the action of the group is undone by $\gamma$, it follows that $\cT$  is the linearization of $\gamma$ at $0$.   
 
 Finally, property ($\rho 3$) in Lemma \ref{p:COM} gives the 
  the $Q$-Lipschitz property \eqr{e:QLipgamma}.
 \end{proof}

 \subsection{The proof of Theorem \ref{t:one}}
 
 We will use Proposition \ref{t:balancing} to complete the proof of the main theorem of the paper. 
  Let $\Psi$ be the time one map for rescaled MCF, restricted to a small neighborhood $U_0 \subset E$ of $0$, and $T$ its linearization.  As before, we have a $T$-invariant
splitting $E=E_1 \oplus E_2$ where $T$ is strictly expanding on $E_1$ and less expanding on $E_2$.

    The key will be to mod out the group action by considering an equivalent dynamical system on the set of balanced hypersurfaces $\Gamma$.   To do this, define $\PG: U_0   \to \Gamma$ by 
 \begin{align}
 	\PG (u) = \gamma (\Psi (u)) \, .
 \end{align}
 To make this work, it will be crucial that:
 \begin{itemize}
 \item $\gamma$ commutes with $\PG$.
 \item The rotation group $\cR$ commutes with $\PG$.
 \end{itemize}

\begin{proof}[Proof of Theorem \ref{t:one}]
 Let $s>0$ be a small constant to be chosen and let $W_0$ be the set of points whose trajectories never leave the (closed)  $s$-tubular neighborhood of the orbit $\cR_0$ under the action of $\PG$
\begin{equation}
	W_0 = \{ x \in E \, | \, {\text{ for all $n\geq 0$ there exists  $g_n \in \cR$ so that }} g_n(\PG^n (x)) \in \overline{B_s}  \} \, .
\end{equation}
Since $\PG$ and the action are continuous, $W_0$ is closed.

We will apply the general results from Section \ref{s:one} to the map $\PG$.  
It follows from the chain rule   that the linearization $T_{\Gamma}$ of $\PG$ at $0$ is the composition of $\cT$ and $T$.  This preserves the splitting of $E_1$ and $E_2$ and satisfies property (2) from Section \ref{s:one}.  Property (3) in Section \ref{s:one} follows from  property (3) for $\Psi$ and $T$ together with Proposition \ref{t:balancing} and  the triangle inequality.  
Since $\cR$ commutes with $\PG$, properties ($\cR 0$)--($\cR 2$) for $\Psi$ extend to $\PG$.  Consequently, Proposition \ref{l:w2cR} applies and, thus, 
if $s> 0$ is sufficiently small, then $B_s \cap W_0$ is  the graph of a $Q$-Lipschitz mapping $u: P_2(W_0) \subset E_2 \to E_1$.  

Finally, we will show that the complement of $W_0$ has the desired properties.  Suppose therefore that $v \in B_s \setminus W_0$.  By the definition of $W_0$, there is some first positive integer $n$ so that if $g$ is any rotation, then $g (\PG^n (v)) \notin \overline{B_s}$.   Note that $g_{n-1} (\PG^{n-1} (v))$ is in $B_s$, so 
$g_{n-1} (\PG^n (v))$ is in a small ball $B_{s'}$ (by continuity) and, by construction, is also in $\Gamma$.  
The hypersurface $\PG^n (v)$ differs from $\Psi^n (v)$ by a translation and dilation. 

Define the set $\Omega =  \overline{B_{s'}} \cap \Gamma \setminus \cR({B_{s}})$ and then let $\Omega^B \subset \Omega$ be the subset where the hypersurface satisfies the uniform bound $|\nabla A| \leq C_B$.  Using interior estimates for mean curvature flow, we can choose $C_B$ large enough that any time one flow starting in $B_s$ satisfies this bound and continues to do so even after applying the balancing map $\gamma$.

Note that $g_{n-1} (\PG^n (v))$ is in $\Omega^B$.  To complete the proof, we will show that there exists $\delta > 0$ so that the action of the conformal linear group on $B_{\delta}$ does not intersect $\Omega^B$.  We will argue by contradiction.   Suppose, thus,  that there exist $v_i \in B_{2^{-i}}$, $g_i \in \cR$ and $h_i \in \cG$ with
\begin{align}
 	g_i (h_i (v_i))) \in \Omega^B \, .
\end{align}
Since $v_i \to 0$, we have that $\rho(v_i) \to (0,1)$.  It follows that $h_i \to 0$.  Since $\cR$ is compact, we can pass to a subsequence so that the $g_i$'s converge to some $\bar{g}$.  It follows that  $g_i (h_i (v_i))) \to \bar{g} (0)$, i.e., they converge to something that is not in $\Omega$.  However, $\Omega^B$ is compactly contained in $\Omega$ and, thus, the limit must be in $\Omega$.  This contradiction completes the proof.
\end{proof}

 \appendix
 
\section{The rescaled MCF equation}	\label{a:lemmas}
 
In this appendix, we will prove    Lemma \ref{t:nonlinQ}.
We will need expressions for geometric quantities for a graph $\Sigma_u$ of a function $u$ over a   hypersurface $\Sigma$,
where $\Sigma_u$ is given by
\begin{equation}
	\Sigma_u = \{ x + u(x) \, \nn (x) \, | \, x \in \Sigma \} \, .
\end{equation}
We will assume that $|u|$ is small so  $\Sigma_u$ is contained in a tubular neighborhood of $\Sigma$ where the normal exponential map is invertible.  Let $e_{n+1}$ be the gradient of the (signed) distance function to $\Sigma$, normalized so that $e_{n+1}$ equals $\nn$ on $\Sigma$.
The geometric quantities    are:
\begin{itemize}
\item The relative area element $\nu_u (p) = \sqrt{\det g^u_{ij}(p)}/ \sqrt{\det g_{ij}(p)}$, where $g_{ij}(p)$ is the metric for $\Sigma$ at $p$ and
 $g^u_{ij}(p)$ is the pull-back metric from $\Sigma_u$.
 \item The mean curvature $H_u(p)$ of $\Sigma_u$ at $(p+ u(p) \, \nn(p))$.
 \item The support function $\eta_u (p) = \langle p + u (p) \, \nn (p) , \nn_u \rangle$, where $\nn_u$ is the normal to $\Sigma_u$.
 \item The speed function $w_u (p) = \langle e_{n+1} , \nn_u \rangle^{-1}$ evaluated at the point $p + u (p) \, \nn(p)$.
\end{itemize}

\vskip1mm
The mean curvature and the support function   appear in the rescaled MCF equation.  The speed function enters indirectly when we rewrite the equation in graphical form;   the speed function adjusts for  that the normal direction and vertical directions may not be the same.  The relative area element is used to compute the mean curvature.  See \cite{EH1}, \cite{EH2} for similar quantities for graphs over a plane.

\vskip1mm
The next lemma from \cite{CM3} (lemma $A.3$ there) computes $\nu_u$, $\eta_u$ and $w_u$:

\begin{Lem} \cite{CM3}	\label{l:areau}
There are   functions $w, \nu , \eta$ depending  on $(p , s , y)  \in \Sigma \times \RR \times T_p\Sigma$  that are smooth for $|s|$ sufficiently small and depend smoothly on $\Sigma$ so that:
\begin{itemize}
\item   $w_u (p) = w(p, u (p) , \nabla u (p))$, $\nu_u (p) = \nu (p ,u (p) , \nabla u(p))$ and $\eta_u (p) = \eta (p, u(p) , \nabla u (p))$.
\end{itemize}
The ratio $\frac{w}{\nu}$ depends only on $p$ and $s$.
Finally, the functions $w$, $\nu$, and $\eta$ satisfy:
\begin{itemize}
\item $w(p,s,0) \equiv 1$,  $\partial_s w(p,s,0) = 0$, $\partial_{y_{\alpha}}  w  (p,s,0) = 0$, and $ 
\partial_{y_{\alpha}} \partial_{  y_{\beta} }w (p,0,0) = \delta_{\alpha \beta}$.
\item  $\nu(p,0,0) =1$; the only non-zero first and second order terms are $ \partial_s \nu  (p,0,0) = H(p)$,
$ \partial_{p_j} \partial_s \nu  (p,0,0) = H_j(p)$,
$\partial_s^2 \nu(p,0,0) = H^2 (p) - |A|^2 (p)$, and 
$\partial_{y_{\alpha}} \partial_{  y_{\beta} }  \nu  (p,0,0) = \delta_{\alpha \beta}$.
\item $\eta (p,0, 0) = \langle p , \nn \rangle$, $ \partial_s  \eta  (p,0,0) = 1$, and $ \partial_{  y_{\alpha}} \eta (p,0,0) = - p_{\alpha}$.
\end{itemize}
\end{Lem}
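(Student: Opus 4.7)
The plan is to parametrize $\Sigma_u$ over $\Sigma$ by the normal graph map
\begin{equation*}
  F_u : \Sigma \to \RR^{n+1}, \qquad F_u(p) = p + u(p)\,\nn(p),
\end{equation*}
compute the pull-back metric and the unit normal $\nn_u$ in closed form, and then read off $\nu_u$, $w_u$, $\eta_u$ as explicit smooth functions of $(p,u(p),\nabla u(p))$. The verification of the values and low-order derivatives at $(s,y)=(0,0)$ is then a routine Taylor expansion.

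First, fix $p$ and an orthonormal frame $\{e_\alpha\}$ for $T_p\Sigma$ that diagonalizes $A$ at $p$. Using $\nabla_{e_\alpha} \nn = \sum_\beta A_{\alpha\beta}\, e_\beta$, we get
\begin{equation*}
  dF_u(e_\alpha) = (I+u\,A)(e_\alpha) + u_\alpha \, \nn,
\end{equation*}
so the pull-back metric at $p$ is
\begin{equation*}
  g^u_{\alpha\beta}(p) = \bigl[(I+uA)^2\bigr]_{\alpha\beta} + u_\alpha u_\beta.
\end{equation*}
The matrix determinant lemma gives
\begin{equation*}
  \det g^u = \det(I+uA)^2 \,\bigl(1 + \bigl|(I+uA)^{-1}\nabla u\bigr|^2\bigr),
\end{equation*}
so $\nu_u(p) = \nu(p,u(p),\nabla u(p))$ with the explicit smooth function $\nu(p,s,y) = \det(I+sA_p)\,\sqrt{1+|(I+sA_p)^{-1}y|^2}$, well defined for $|s|$ small.

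Next, $\nn_u$ is obtained by requiring $\langle \nn_u, dF_u(e_\alpha)\rangle=0$ and $|\nn_u|=1$. Writing $\nn_u = \alpha\bigl(\nn - (I+uA)^{-1}\nabla u\bigr)$ and solving the normalization gives
\begin{equation*}
  \alpha = \bigl(1 + |(I+uA)^{-1}\nabla u|^2\bigr)^{-1/2}.
\end{equation*}
Since the ambient space is flat, normal geodesics from $\Sigma$ are straight lines, so the gradient $e_{n+1}$ of the signed distance function, at the point $p+u(p)\nn(p)$, equals $\nn(p)$. Hence
\begin{equation*}
  w_u(p) = \langle e_{n+1},\nn_u\rangle^{-1} = 1/\alpha = \sqrt{1+\bigl|(I+uA)^{-1}\nabla u\bigr|^2},
\end{equation*}
and this defines the function $w(p,s,y)$. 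Similarly, decomposing $p$ into its tangent and normal parts and substituting into $\eta_u = \langle p+u\nn,\nn_u\rangle$ yields
\begin{equation*}
  \eta(p,s,y) = \alpha \bigl(\langle p,\nn\rangle + s - \langle p^T,(I+sA)^{-1}y\rangle\bigr).
\end{equation*}
Comparing the formulas for $\nu$ and $w$ immediately shows $\nu/w = \det(I+sA_p)$, which depends only on $p$ and $s$.

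All the specific values at $(s,y)=(0,0)$ (and at $y=0$ with $s$ free for $w$) follow by direct Taylor expansion of the three closed-form expressions. For $w$: since $w$ equals $1$ whenever $y=0$, the claims $w(p,s,0)\equiv 1$, $\partial_s w(p,s,0)=0$, $\partial_{y_\alpha} w(p,s,0)=0$ are immediate, and $\partial_{y_\alpha}\partial_{y_\beta}\sqrt{1+|y|^2}\big|_{y=0}=\delta_{\alpha\beta}$. For $\eta$: at $y=0$ one has $\eta=\langle p,\nn\rangle+s$, giving $\partial_s\eta=1$, and $\partial_{y_\alpha}\alpha|_{y=0}=0$ then yields $\partial_{y_\alpha}\eta(p,0,0)=-p_\alpha$. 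For $\nu$, one Taylor-expands $\log\det(I+2sA+s^2A^2+y\otimes y)$; using $\tr\bigl[\,\cdot\,\bigr] - \tfrac12\tr\bigl[\,\cdot\,\bigr]^2 + \dots$ gives $\nu=1+sH+\tfrac{s^2}{2}(H^2-|A|^2)+\tfrac12|y|^2+O(3)$, which supplies the values $\partial_s\nu=H$, $\partial_s^2\nu=H^2-|A|^2$, $\partial_{y_\alpha}\partial_{y_\beta}\nu=\delta_{\alpha\beta}$; differentiating in $p$ then gives $\partial_{p_j}\partial_s\nu(p,0,0)=H_j(p)$.

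The only step requiring care is the inversion of $(I+uA)$ on $T_p\Sigma$, which is why one must restrict to $|s|$ small (small relative to $\|A\|^{-1}$); once this is done everything is an elementary computation with smooth finite-dimensional matrix-valued functions, so smoothness of $w,\nu,\eta$ in $(p,s,y)$ and in the metric data of $\Sigma$ is automatic.
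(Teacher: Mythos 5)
Your computation is correct and follows the natural route, which is presumably the same as the cited proof in \cite{CM3}: parametrize $\Sigma_u$ by $F_u(p)=p+u(p)\nn(p)$, compute the induced metric $(I+uA)^2 + \nabla u\otimes\nabla u$, apply the matrix-determinant lemma to get the closed form for $\nu$, solve for $\nn_u$ to get $w$ and $\eta$, and Taylor-expand. The key structural observations are all present: each quantity depends only on $(p,u(p),\nabla u(p))$ because $dF_u$ does, the identity $\nu/w=\det(I+sA_p)$ is immediate from the explicit formulas, and the restriction $|s|\ll\|A\|^{-1}$ is exactly what is needed for $(I+sA_p)$ to be invertible. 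The Taylor coefficients you list match the statement; the only thing I would make slightly more explicit is that the "only non-zero first and second order terms" claim for $\nu$ also requires noting $\partial_{y_\alpha}\nu(p,0,0)=0$ and $\partial_s\partial_{y_\alpha}\nu(p,0,0)=0$, both of which follow instantly from your formula since $\partial_{y_\alpha}\nu$ vanishes identically on $\{y=0\}$.
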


 Using this, corollary $A.30$ in \cite{CM3} computed
  the mean curvature $H_u$:
  
\begin{Cor} \cite{CM3}	\label{c:graphue}
The mean curvature $H_u$ of $\Sigma_u$ is given by
\begin{align}	\label{e:ymc}
	H_u (p) &= \frac{w}{ \nu } \left[ \partial_s \nu       - \dv_{\Sigma} 
	\left(  \partial_{y_{\alpha}} \nu   \right) \right]   	\, , 
\end{align}
where $\nu$ and its derivatives  are all evaluated at $(p,u(p) , \nabla u(p))$.
\end{Cor}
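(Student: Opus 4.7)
The plan is to derive the formula by comparing two expressions for the first variation of area of $\Sigma_u$ under a vertical perturbation $u \mapsto u + \epsilon \phi$, with $\phi$ an arbitrary smooth function on $\Sigma$. On the one hand, since $\Sigma$ is closed and $\nu$ is the relative area element, Lemma~\ref{l:areau} gives $\Area(\Sigma_u) = \int_\Sigma \nu(p, u(p), \nabla u(p))\, dp$. Differentiating in $\epsilon$ via the chain rule and then integrating by parts on $\Sigma$ (no boundary term appears since $\Sigma$ is closed) would yield
\begin{equation}
\left.\frac{d}{d\epsilon}\right|_{\epsilon=0} \Area(\Sigma_{u+\epsilon\phi}) = \int_{\Sigma} \left[ \partial_s \nu - \dv_{\Sigma}(\partial_{y_\alpha} \nu) \right] \phi \, dp,
\end{equation}
where the derivatives of $\nu$ are evaluated at $(p, u(p), \nabla u(p))$.

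On the other hand, I would compute the same derivative using the classical first variation formula on $\Sigma_u$, namely $\delta\,\Area = \int_{\Sigma_u} H_u \, \langle \mathbf{V}, \nn_u \rangle$. The perturbation $u \mapsto u+\epsilon\phi$ moves the point $p + u(p)\nn(p)$ along the integral curve of $e_{n+1}$, so $\mathbf{V} = \phi \, e_{n+1}$ at that point, and the component along $\nn_u$ is $\phi \, \langle e_{n+1}, \nn_u \rangle = \phi/w_u$ by the definition of the speed function. Converting the integral over $\Sigma_u$ to one over $\Sigma$ via the Jacobian $\nu_u$ turns this into $\int_{\Sigma} H_u \, (\nu/w) \, \phi \, dp$, and equating with the first expression for all test $\phi$ produces the pointwise identity $H_u \cdot (\nu/w) = \partial_s \nu - \dv_\Sigma(\partial_{y_\alpha}\nu)$. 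Multiplying through by $w/\nu$ gives the claim.

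The one subtle point is the prefactor $w/\nu$: it appears because vertical variations (along $\nn(p)$ at the base point $p$) and normal variations of $\Sigma_u$ (along $\nn_u$) are not the same, with $w_u^{-1} = \langle e_{n+1}, \nn_u \rangle$ being exactly the conversion factor between them. It is encouraging that Lemma~\ref{l:areau} already records that $w/\nu$ depends only on $(p,s)$, which suggests that this prefactor is a natural geometric object independent of $\nabla u$. Aside from keeping track of this factor, the remainder of the argument is direct differentiation of the explicit expression for $\nu$ together with one integration by parts on the closed hypersurface $\Sigma$; I expect no additional obstacles.
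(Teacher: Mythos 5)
Your argument is correct. The paper does not prove this corollary internally---it cites Corollary A.30 of \cite{CM3} directly---so I can only assess your derivation on its own terms, and it holds up. You compute the first variation of area two ways: once as the Euler--Lagrange expression obtained by differentiating $\int_\Sigma \nu(p,u(p),\nabla u(p))\,d\Area_\Sigma$ under $u\mapsto u+\epsilon\phi$ and integrating by parts on the closed $\Sigma$ (giving $\int_\Sigma[\partial_s\nu - \dv_\Sigma(\partial_{y_\alpha}\nu)]\phi$), and once from the classical formula $\delta\Area = \int_{\Sigma_u} H_u\langle\mathbf{V},\nn_u\rangle\,d\Area_{\Sigma_u}$. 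You correctly identify both conversion factors: the velocity of the graph point under the vertical perturbation is $\phi\,e_{n+1}$, whose normal component is $\phi/w_u$ by definition of the speed function, and pulling back the integral from $\Sigma_u$ to $\Sigma$ contributes the Jacobian $\nu_u$, yielding $\int_\Sigma H_u\,(\nu/w)\,\phi\,d\Area_\Sigma$. Equating and invoking arbitrariness of $\phi$ gives the stated identity. (The tangential part of $\mathbf{V}$ contributes a pure divergence on the closed $\Sigma_u$ and thus drops out, which justifies using only the normal component.) As a sanity check, the formula reduces to $H_0 = H(p)$ at $u=0$ using $\nu(p,0,0)=w(p,0,0)=1$, $\partial_s\nu(p,0,0)=H(p)$, $\partial_{y_\alpha}\nu(p,0,0)=0$, and its linearization at $u=0$ reproduces $L^H u = -\Delta u - |A|^2 u$ from \eqr{l:secvar}, so the sign and prefactor are both right. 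The alternative route would be to compute the second fundamental form of $\Sigma_u$ directly and trace it; your variational approach is cleaner given that Lemma \ref{l:areau} already hands you $\nu$ and $w$.
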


Using this and Lemma \ref{l:areau} gives the well-known (see, e.g., \cite{HP})  formula for the linearization $L^H$ of $H_u$:
\begin{equation}	\label{l:secvar}
	L^H \, u \equiv \frac{d}{dt} \big|_{t=0} \, H_{tu}  = -\Delta \, u - |A|^2 \, u \, .
\end{equation}

\subsection{The rescaled mean curvature flow over a  shrinker}

   Lemma $A.44$ in \cite{CM3} computes the graphical rescaled MCF equation:

\begin{Lem}	\label{l:normpart1}
\cite{CM3} The graphs $\Sigma_{u}$ flow by rescaled MCF   if and only if $u$ satisfies
\begin{align}
	 \partial_t u(p,t)  =  w (p, u(p,t) , \nabla u (p,t)) \,  \left( \frac{1}{2} \, \eta (p, u(p,t) , \nabla u(p,t)) - H_u \right)   \equiv \cM \, u \, .
\end{align}
\end{Lem}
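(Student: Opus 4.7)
The plan is to derive the scalar PDE for $u$ by combining two ingredients: a characterization of rescaled MCF purely in terms of the normal speed of the evolving hypersurface, and a projection computation that relates the vertical parametric speed $\partial_t u$ to that geometric normal speed. Recall that a one-parameter family of hypersurfaces with unit normal $\nn_u$ flows by rescaled MCF if and only if, at each point, the component of the velocity along $\nn_u$ equals $\tfrac{1}{2}\langle x, \nn_u\rangle - H_u$; tangential components correspond to reparametrizations and do not alter the image hypersurface, so the flow is equivalent to prescribing just this scalar normal speed.

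Parameterize $\Sigma_{u(\cdot,t)}$ by $X(p,t) = p + u(p,t)\, \nn(p)$ with $\nn$ the fixed unit normal to $\Sigma$. Differentiating in $t$ gives $X_t = u_t\, \nn(p)$, which lies along $\nn(p)$ rather than along the normal $\nn_u$ to $\Sigma_u$. Projecting onto $\nn_u$ yields the geometric normal speed $\langle X_t, \nn_u\rangle = u_t\, \langle \nn(p), \nn_u\rangle$. Equating this to the prescribed rescaled-MCF speed gives
\begin{align*}
u_t \, \langle \nn(p), \nn_u \rangle \;=\; \tfrac{1}{2}\,\langle X(p,t), \nn_u \rangle - H_u(p) \, .
\end{align*}

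By definition $\langle X(p,t), \nn_u \rangle = \eta_u(p) = \eta(p,u(p,t),\nabla u(p,t))$. For the angle factor, observe that in $\RR^{n+1}$ the integral curves of $e_{n+1}$ (the gradient of the signed distance to $\Sigma$) are the ambient straight lines $s \mapsto p + s\, \nn(p)$, so $e_{n+1}$ at $X(p,t)$ coincides with $\nn(p)$. Hence $\langle \nn(p), \nn_u\rangle = \langle e_{n+1}, \nn_u\rangle = 1/w_u(p) = 1/w(p,u,\nabla u)$. Dividing through by this factor gives the claimed identity $\partial_t u = w\bigl(\tfrac{1}{2}\eta - H_u\bigr) \equiv \cM\, u$.

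The only step requiring any care is the identification $e_{n+1}|_{X(p,t)} = \nn(p)$, which is a standard consequence of the flatness of Euclidean space (normals to $\Sigma$ generate ambient geodesics) but is what converts the angular bookkeeping into precisely the speed function $w$ of Lemma \ref{l:areau}. Once this is in place, the remainder is algebra and no analytic estimates are needed. I do not foresee a substantive obstacle: the lemma is essentially a direct calculation that parallels well-known derivations for graphical MCF over a plane (as in \cite{EH1,EH2}), with the additional drift term $\tfrac{1}{2}\langle x, \nn_u\rangle$ accounting for the rescaling and producing the $\eta$ factor.
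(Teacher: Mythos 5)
Your derivation is correct. Note first that the paper does not actually prove this lemma but simply cites it from \cite{CM3} (it appears as Lemma A.44 there), so there is no in-paper argument to compare against. Your proof is the standard and essentially only natural one: you reduce rescaled MCF to a prescription of the normal speed, compute $\langle X_t,\nn_u\rangle = u_t\,\langle \nn(p),\nn_u\rangle$ for the graph parametrization $X(p,t)=p+u(p,t)\,\nn(p)$, identify $\langle X,\nn_u\rangle$ with the support function $\eta_u$, and use that in flat space the gradient $e_{n+1}$ of the signed distance is parallel along the normal line through $p$, so $\langle \nn(p),\nn_u\rangle=\langle e_{n+1},\nn_u\rangle=1/w_u$. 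All three identifications match the definitions preceding Lemma \ref{l:areau} exactly, and the smallness of $|u|$ (so that the normal exponential map is a diffeomorphism onto a tubular neighborhood) is already assumed there, which is the only hypothesis your $e_{n+1}=\nn(p)$ step needs. The only cosmetic point worth being explicit about is the ``if and only if'': for the forward direction you use that two parametrized flows trace out the same one-parameter family of hypersurfaces iff their velocities have the same normal component, and that the graph parametrization has purely normal-to-$\Sigma$ (not normal-to-$\Sigma_u$) velocity $u_t\,\nn(p)$, which is what makes the $w$ correction appear. You have this, so the argument is complete.
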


Using this, we can compute the linearization of $\cM$:
 
\begin{Cor}	\label{c:line}
The linearization of $\cM u$ at $u=0$ is given by
\begin{equation}
	\frac{d}{dr} \, \big|_{r=0} \, \cM (r\, u) = \Delta \, u + |A|^2 \, u - \frac{1}{2} \, \langle p , \nabla u \rangle + \frac{1}{2} \, u 
	= L \, u \, , 
\end{equation}
where $L$ is the second variation operator for the $F$ functional from section $4$ of 
\cite{CM1}.
\end{Cor}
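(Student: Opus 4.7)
The plan is to directly differentiate the expression
\begin{equation*}
\cM u = w(p, u, \nabla u)\,\left( \tfrac{1}{2}\, \eta(p, u, \nabla u) - H_u \right)
\end{equation*}
at $u=0$ using the product rule, and then invoke the data supplied by Lemma \ref{l:areau} and \eqref{l:secvar}. The key observation that makes this clean is that $\Sigma$ is a shrinker, so $H = \langle p, \nn\rangle / 2 = \eta(p,0,0)/2$ at $u=0$; hence the factor $\left( \tfrac{1}{2} \eta - H_u \right)$ vanishes identically on $\Sigma$. Consequently, when we apply the product rule to $\frac{d}{dr}\big|_{r=0} \cM(ru)$, the term in which $w$ is differentiated drops out, and we only need to compute the linearization of the inner factor.

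First, I would compute $\frac{d}{dr}\big|_{r=0}\, \eta(p, ru, r\nabla u)$ via the chain rule. From Lemma \ref{l:areau} we have $\partial_s \eta(p,0,0) = 1$ and $\partial_{y_\alpha}\eta(p,0,0) = -p_\alpha$, so this derivative equals $u - \langle p, \nabla u\rangle$, contributing $\tfrac{1}{2} u - \tfrac{1}{2}\langle p, \nabla u\rangle$ to the final answer. Second, I would use \eqref{l:secvar}, which asserts $\frac{d}{dr}\big|_{r=0} H_{ru} = -\Delta u - |A|^2 u$, so subtracting this contributes $\Delta u + |A|^2 u$. Adding these pieces yields
\begin{equation*}
\frac{d}{dr}\Big|_{r=0} \cM(ru) = \Delta u + |A|^2 u - \tfrac{1}{2} \langle p, \nabla u\rangle + \tfrac{1}{2} u,
\end{equation*}
which matches the definition of $L$ stated in Section 2.

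There is essentially no obstacle here: once one notices that the outer factor vanishes on the shrinker, the calculation reduces to reading off $\partial_s \eta$, $\partial_{y_\alpha}\eta$ from Lemma \ref{l:areau} and quoting the standard mean curvature linearization \eqref{l:secvar}. The only thing to be slightly careful about is the tangential/normal decomposition of the position vector in the $\langle p, \nabla u\rangle$ term, i.e., confirming that $p_\alpha u_\alpha$ in local tangential coordinates is indeed $\langle p, \nabla u\rangle$ where $\nabla u$ is the tangential gradient on $\Sigma$; this is immediate from the setup since $\nabla u$ is tangent to $\Sigma$, so pairing with $p$ picks out only the tangential part of the position vector.
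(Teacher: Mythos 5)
Your proof is correct and follows essentially the same product-rule computation as the paper. The one small difference is how the cross term (where $w$ gets differentiated) is dismissed: the paper notes from Lemma \ref{l:areau} that $\partial_s w(p,0,0) = \partial_{y_\alpha} w(p,0,0) = 0$, so that factor vanishes, whereas you instead observe that the other factor $\tfrac{1}{2}\eta(p,0,0) - H_0(p)$ vanishes because $\Sigma$ is a shrinker. Both observations are valid and each alone suffices (the cross term is in fact $0\cdot 0$); the paper's route has the mild advantage of not invoking the shrinker equation for this step, while yours is arguably the more conceptual one to notice first. The rest of the computation — reading off $\partial_s\eta(p,0,0)=1$ and $\partial_{y_\alpha}\eta(p,0,0)=-p_\alpha$ from Lemma \ref{l:areau}, and quoting $\frac{d}{dr}\big|_{r=0} H_{ru}=-\Delta u - |A|^2 u$ from \eqref{l:secvar} — is identical.
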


\begin{proof}
Computing directly and using that $L^H$ is  the linearization of $H_u$  gives  
\begin{align}
	\frac{d}{dr} \,& \big|_{r=0} \, \cM (r\, u)   = w(p,0,0) \, \left( \frac{1}{2} \, u \, \partial_s \eta (p, 0,0  )+ \frac{1}{2} u_{\alpha} \, \partial_{y_{\alpha}}
	 \eta (p,0,0) - L^H \, u \right)   \\
	 &\qquad + \left( u \, \partial_s w (p, 0,0  )+   u_{\alpha} \, \partial_{y_{\alpha}}
	 w (p,0,0)\right)
	 \left( \frac{1}{2} \, \eta (p, 0,0) - H_{0} \right)  \notag \, .
\end{align}
Since Lemma \ref{l:areau} gives  
$\partial_s w (p, 0,0  ) = \partial_{y_{\alpha}}
	 w (p,0,0) = 0 $ and $w(p,0,0) = 1$, we get 
\begin{align}
	\frac{d}{dr} \, \big|_{r=0} \, \cM (r\, u)   
	=   \frac{1}{2} \, u \, \partial_s \eta (p, 0,0  )+ \frac{1}{2} u_{\alpha} \, \partial_{y_{\alpha}}
	 \eta (p,0,0) + \Delta \, u  + |A|^2 \, u 
	 \, ,
\end{align}
where the last equality used that $L^H \, u = - \Delta u - |A|^2 \, u$ by \eqr{l:secvar}.
Finally, note that Lemma \ref{l:areau} gives 
$\partial_s \eta (p, 0,0  ) = 1$ and $   \partial_{y_{\alpha}}
	 \eta (p,0,0) = - p_{\alpha}$.
\end{proof}

\subsection{Controlling the nonlinearity}
The nonlinearity $\cQ(u)$ is defined by
$
	\cQ (u) = \cM u - L \, u $, where $L$ is the linearization of $\cM$ at $0$.

\begin{Pro}	\label{p:formofQ}
The nonlinearity $\cQ$ can be written as 
\begin{align}
	\cQ(u) = \bar{f}(p,u,\nabla u)   
	+ \dv_{\Sigma} \left( \bar{W} (p,u,\nabla u)  \right) + \langle \nabla \bar{h} , \bar{V} \rangle  \, , 
\end{align}
where $\bar{f}$ and $\bar{h}$ are    smooth functions and  $\bar{W}$ and $\bar{V}$  are smooth vector fields
with:
\begin{enumerate}
\item[(P1)] $\bar{f}(p,0,0)= \partial_s \bar{f} (p,0,0) = \partial_{y_{\alpha}} \bar{f} (p,0,0) = 0$.
\item[(P2)] $\bar{W}(p,0,0) = \partial_s \bar{W} (p,0,0) = \partial_{y_{\alpha}} \bar{W} (p,0,0) = 0$.
\item[(P3)] $\bar{h} (p,0,0) =0$,   $\partial_s \bar{h} (p,0,0) =H(p)$ and $\partial_{y_{\alpha}} \bar{h} (p,0,0) = 0$.
\item[(P4)]  $\bar{V} (p,0,0) = 0$.
\end{enumerate}
\end{Pro}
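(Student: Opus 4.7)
The plan is to start from the explicit graphical form of $\cM u$ given by Lemma \ref{l:normpart1} and Corollary \ref{c:graphue}, namely
\begin{equation*}
\cM u = \frac{1}{2}\, w\,\eta \;-\; \frac{w^2}{\nu}\,\partial_s\nu \;+\; \frac{w^2}{\nu}\,\dv_\Sigma(\partial_{y_\alpha}\nu),
\end{equation*}
where all occurrences of $w, \nu, \eta$ and their partial derivatives are evaluated at $(p,u(p),\nabla u(p))$. The key reorganization is the Leibniz rule
\begin{equation*}
\frac{w^2}{\nu}\,\dv_\Sigma(\partial_{y_\alpha}\nu) = \dv_\Sigma\!\Big(\frac{w^2}{\nu}\,\partial_{y_\alpha}\nu\Big) - \Big\langle \nabla\!\Big(\frac{w^2}{\nu}\Big), \partial_{y_\alpha}\nu \Big\rangle,
\end{equation*}
which already exhibits $\cM u$ as a sum of three pieces having precisely the three structural shapes required for $\cQ$: a pointwise function of $(p,u,\nabla u)$, the divergence of a vector field, and an inner product of a gradient with a vector field.

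Next I would define the four quantities in the statement by subtracting off the appropriate linear parts from each of the three pieces. Concretely, set
\begin{align*}
\bar f(p,s,y) &= \frac{w(p,s,y)\,\eta(p,s,y)}{2} - \frac{w^2(p,s,y)}{\nu(p,s,y)}\,\partial_s\nu(p,s,y) - \Big[\big(|A|^2 + \tfrac{1}{2}\big)\, s - \tfrac{1}{2}\, p_\alpha y_\alpha\Big], \\
\bar W_\alpha(p,s,y) &= \frac{w^2(p,s,y)}{\nu(p,s,y)}\,\partial_{y_\alpha}\nu(p,s,y) - y_\alpha, \\
\bar h(p,s,y) &= 1 - \frac{w^2(p,s,y)}{\nu(p,s,y)}, \qquad \bar V_\alpha(p,s,y) = \partial_{y_\alpha}\nu(p,s,y).
\end{align*}
With these choices $\langle \nabla \bar h, \bar V\rangle$ equals exactly the Leibniz correction term, while the linear-in-$(s,y)$ pieces removed from $\bar f$ and $\bar W$ reassemble to $(|A|^2 + \tfrac{1}{2})u - \tfrac{1}{2}\langle p,\nabla u\rangle + \Delta u$, which is $Lu$ by Corollary \ref{c:line}. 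Hence $\bar f + \dv_\Sigma(\bar W) + \langle \nabla \bar h, \bar V\rangle = \cM u - Lu = \cQ(u)$, as required.

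Finally, properties (P1)--(P4) are verified by a direct chain-rule computation at $(s,y)=(0,0)$ using the data in Lemma \ref{l:areau}. For $\bar f$, one finds $\bar f(p,0,0) = \tfrac{1}{2}\langle p,\nn\rangle - H(p)$, which vanishes because $\Sigma$ is a shrinker; the vanishing of $\partial_s \bar f$ and $\partial_{y_\alpha}\bar f$ at the origin is exactly the linear-part computation already carried out in the proof of Corollary \ref{c:line}. For $\bar W$, the vanishing of value and first derivatives at the origin follows from $\partial_{y_\alpha}\nu(p,0,0)=0$, $\partial_s\partial_{y_\alpha}\nu(p,0,0)=0$, and $\partial_{y_\beta}\partial_{y_\alpha}\nu(p,0,0) = \delta_{\alpha\beta}$ together with $w(p,0,0)=\nu(p,0,0)=1$. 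Properties (P3) and (P4) for $\bar h, \bar V$ are immediate: $\bar h(p,0,0)=0$ and $\partial_{y_\alpha}\bar h(p,0,0)=0$ since $w^2/\nu = 1$ at the origin with vanishing $y$-derivative, while $\partial_s(w^2/\nu)(p,0,0) = -H(p)$ gives $\partial_s\bar h(p,0,0) = H(p)$, and $\bar V(p,0,0)=0$ is restated from the lemma. I do not foresee any serious obstacle: the argument is bookkeeping of first- and second-order Taylor data, and the only geometric input beyond the calculus of graphs is the shrinker identity used to kill $\bar f(p,0,0)$.
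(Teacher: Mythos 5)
Your proposal is correct and takes essentially the same route as the paper: you write out $\cM u$ via Corollary \ref{c:graphue} and Lemma \ref{l:normpart1}, apply the Leibniz rule to split off the divergence piece, and define $\bar f$, $\bar W = \frac{w^2}{\nu}\partial_{y_\alpha}\nu - y_\alpha$, $\bar h = 1 - \frac{w^2}{\nu}$, $\bar V = \partial_{y_\alpha}\nu$ exactly as the paper does. The only variation is cosmetic: you verify (P1) for the first-order terms by appealing to Corollary \ref{c:line} (since (P2)--(P4) force the linearizations of the other two pieces to vanish, and $L$ is by definition the linearization of $\cM$, the linear part of $\bar f$ must vanish), whereas the paper does the brute-force computation of $\partial_s \bar f(p,0,0)$ and $\partial_{y_\beta} \bar f(p,0,0)$ from the Taylor data in Lemma \ref{l:areau}; both are valid, and yours is arguably cleaner, though you should make explicit that the deduction uses (P2)--(P4) rather than saying the vanishing "is exactly" the Corollary \ref{c:line} computation.
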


\vskip2mm
The point of Proposition \ref{p:formofQ} is that $\cQ (u)$ is essentially quadratic in $u$.   Namely, if $r$ is a small parameter and $u$ is a fixed function, then Proposition \ref{p:formofQ} gives
\begin{equation}	\label{e:easyr}
	|\cQ ( r\, u) |	\leq C_u \, r^2 \, , 
\end{equation}
where $C_u$ is a constant depending on $u$ and bounds for the derivatives of $\bar{f}$ and $\bar{W} $.

\begin{proof}[Proof of Proposition \ref{p:formofQ}]
In  this proof,   $w(0)$  denotes $w(p,0,0)$ and $w$  denotes $w(p,u(p) , \nabla u(p))$; we use the same convention for $\eta (0)$, $\nu (0)$ and other functions of $(p,s,y)$.

Using Corollary \ref{c:graphue} and Lemma \ref{l:normpart1}, 
the operator $\cM$ is given by
\begin{align}	 
	\cM \, u  &= w  \, \left( \frac{1}{2} \, \eta   - H_u \right)    =  \frac{1}{2} \, w\,  \eta  - 
\frac{w^2  \partial_s \nu}{\nu} +
 \frac{w^2}{\nu} \, \dv_{\Sigma} \left( \partial_{y_{\alpha}} \nu \right)  \, .
\end{align}
Define 
$
	\bar{W} =   \frac{w^2}{\nu}    \, \partial_{y_{\alpha}} \, \nu - y_{\alpha} $, so that   
\begin{align}
	\cQ \, u   - \dv_{\Sigma} (\bar{W}) &= \frac{w\, \eta}{2} - \frac{w^2 \partial_s \nu}{\nu}
	+ \frac{w^2}{\nu} \, \dv_{\Sigma} (\partial_{y_{\alpha}} \, \nu) - \Delta u +   \langle \frac{p}{2} , \nabla u \rangle  - |A|^2 \, u - \frac{u}{2}   \notag \\
	&\qquad + \dv_{\Sigma} \left( u_{\alpha} - \frac{w^2 }{\nu}    \, \partial_{y_{\alpha}} \, \nu  \right) \\
	&= \frac{w\, \eta}{2} - \frac{w^2 \partial_s \nu}{\nu} - \langle \nabla \frac{w^2 }{\nu} ,  \partial_{y_{\alpha}} \, \nu \rangle + \frac{1}{2} \langle p , \nabla u \rangle  - |A|^2 \, u - \frac{1}{2} \, u 
	\notag \, .
\end{align}
Hence, we define the vector field $\bar{V}$ by
$\bar{V}  = \partial_{y_{\alpha}} \nu$ and 
functions $\bar{h}  = 1 - \frac{w^2}{\nu}$ and{\footnote{We added $1$ in the definition of $\bar{h}$ to make $\bar{h} (0) = 0$.}}
 \begin{align}
 	\bar{f} &= \frac{w\, \eta}{2} - \frac{w^2\, \partial_s \nu}{\nu}  + \frac{1}{2} \langle p , y \rangle  - |A|^2 \, s - \frac{1}{2} \, s \, 	  .
 \end{align}

It remains to check the (P1)--(P4) using the following results from Lemma \ref{l:areau}:
\vskip2mm
\begin{tabular}{c c c c}
 Function of $(p,s,y)$ & Value at $(p,0,0)$ & $\partial_s$ at $(p,0,0)$  & $\partial_{y_{\beta}}$ at $(p,0,0)$ \\
 $w$ & 1 & 0 & 0 \\
  $\nu$ & 1 & $H(p)$ & 0 \\
   $\eta$ & $\langle p , \nn \rangle$ & 1 & $-p_{\beta}$ \\
$ \partial_{y_{\alpha}} \nu$ &0 & 0 & $\delta_{\alpha \beta}$ \\
$  \partial_s \nu $ & $H(p)$ & $H^2(p) - |A|^2(p)$ & 0 \\
\end{tabular}
\vskip2mm

The first claim in (P1)  follows 
 \begin{align}
 	\bar{f} (0) &= \frac{w (0)\eta(0)}{2} -  \frac{w^2(0) \partial_s \nu (0) }{\nu(0)} = \frac{1}{2} \, \langle p , \nn \rangle - H(p) = 0 \, , 
\end{align}
where the last equality is the  shrinker equation.  For the second claim in (P1), we get
\begin{align}
	\partial_s \bar{f}(0) &= \frac{\eta(0) \partial_s w (0)}{2}  + \frac{w (0)\partial_s \eta(0)}{2} 
	- \frac{w^2(0) \partial^2_s \nu(0)}{\nu(0)} + \frac{w^2(0) (\partial_s \nu(0))^2}{\nu^2 (0)} \notag \\
	&\qquad
	- \frac{ 2 w (0) \, \partial_s w (0) \partial_s \nu (0)}{\nu (0)} - |A|^2 (p) - \frac{1}{2}  \\
	&= 0 + \frac{1}{2} - (H^2(p) - |A|^2(p)) + H^2 (p) - 0 - |A|^2 (p)   - \frac{1}{2}  = 0 \, . \notag
\end{align}
 The last claim in (P1) follows from
 \begin{align}
	\partial_{y_{\beta}} \bar{f}(0) &= \frac{\eta(0) \partial_{y_{\beta}} w (0)}{2}   + \frac{w (0)\partial_{y_{\beta}} \eta(0)}{2} 
	- \frac{w^2(0)\partial_{y_{\beta}} \partial_s \nu(0)}{\nu(0)} 
	+ \frac{w^2(0)\partial_s \nu(0) \, \partial_{y_{\beta}} \nu (0)}{\nu^2 (0)} \notag \\
	&\qquad -
	\frac{2\, w(0) \, \partial_{y_{\beta}} w (0) \partial_s \nu (0)}{\nu (0)}
	 + \frac{1}{2} \, p_{\beta}  = 0 - \frac{1}{2}\, p_{\beta} -0  + 0 - 0 + \frac{1}{2} \, p_{\beta}  = 0 \, .  
\end{align}

Next, we turn   to (P2) and  $\bar{W}$.  The first claim is immediate since
$
	\bar{W} (0) = \frac{w^2 (0)}{\nu(0)}    \, \partial_{y_{\alpha}} \, \nu (0) = 0 
$.
The second claim follows from
\begin{align}
	\partial_s \bar{W} (0) &= \frac{2w(0) \, \partial_s  w(0) }{\nu(0)}    \, \partial_{y_{\alpha}} \, \nu(0) - 
	\frac{w^2 (0) \partial_s \nu (0)}{\nu^2(0)}    \, \partial_{y_{\alpha}} \, \nu (0) + 
	\frac{w^2(0) }{\nu(0)}    \, \partial_s \, \partial_{y_{\alpha}} \, \nu (0) = 0  
\end{align}
since each term vanishes.  The last claim follows from 
\begin{align}
	\partial_{y_{\beta}} \bar{W} (0) &= \frac{2 w(0) \, \partial_{y_{\beta}}  w(0) }{\nu(0)}    \, \partial_{y_{\alpha}} \, \nu(0) - 
	\frac{w^2 (0) \partial_{y_{\beta}} \nu (0)}{\nu^2(0)}    \, \partial_{y_{\alpha}} \, \nu (0) + 
	\frac{w^2(0) }{\nu(0)}    \, \partial_{y_{\beta}} \, \partial_{y_{\alpha}} \, \nu (0)  - \delta_{\alpha \beta} \notag \\
	&= 0 - 0 +  \delta_{\alpha \beta} -  \delta_{\alpha \beta} = 0 \, .
\end{align}

The first part of   (P3) follows since $\nu(p,0,0) =w(p,0,0) = 1$.  The second part uses
\begin{align}
	\partial_s \bar{h} (0)  =  \frac{w^2(0) \, \partial_s \nu (0)}{\nu^2(0)} - \frac{2 \, w(0) \partial_s w (0)}{\nu (0)} = H(p) - 0 = H(p)  \, .
\end{align}
The last claim in (P3) follows  from $ \partial_{y_{\alpha}} \nu (0) = \partial_{y_{\alpha}} w (0) = 0$.

Finally, Property (P4) is immediate since  $\bar{V}= \partial_{y_{\alpha}} \nu$ vanishes at $(p,0,0)$.
\end{proof}

 We will also use   the following elementary
calculus lemma:

\begin{Lem}	\label{l:calculus}
Let $\bar{U}$ be a $C^1$ function of $(p,s,y)$.  If $u$ and $v$ are $C^1$ functions on $\Sigma$, then 
\begin{align}
	\left| \bar{U} (p,u(p), \nabla u(p)) - \bar{U} (p,v(p), \nabla v(p)) \right| &\leq
	C_{\bar{U}} \, \left( \left| u(p) - v(p) \right|  +   \left| \nabla u (p) - \nabla v (p) \right| \right) \, ,
\end{align}
where $C_{\bar{U}} = \sup \{ \left| \partial_s \bar{U} \right| + \left| \partial_{y_{\alpha}} \bar{U} \right|
\, \big|   \, |s| + |y| \leq \| u \|_{C^1} + \| v \|_{C^1} \}$.
\end{Lem}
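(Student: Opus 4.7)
The plan is to reduce the estimate to the one-dimensional fundamental theorem of calculus applied along a straight-line path in the $(s,y)$ variables, since $p$ is fixed throughout. Concretely, I would fix $p \in \Sigma$ and define the path
\begin{equation}
\gamma(t) = \bigl(p,\, v(p) + t\,(u(p) - v(p)),\, \nabla v(p) + t\,(\nabla u(p) - \nabla v(p))\bigr), \qquad t \in [0,1].
\end{equation}
The key observation is that the $s$-coordinate along $\gamma(t)$ is a convex combination of $u(p)$ and $v(p)$, so $|s| \leq \max\{|u(p)|, |v(p)|\} \leq \|u\|_{C^1} + \|v\|_{C^1}$, and similarly $|y| \leq \|u\|_{C^1} + \|v\|_{C^1}$. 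Hence $\gamma(t)$ remains in the region $\{|s|+|y| \leq \|u\|_{C^1} + \|v\|_{C^1}\}$ over which $C_{\bar U}$ is defined.

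Next, I would apply the fundamental theorem of calculus and the chain rule to $\phi(t) = \bar{U}(\gamma(t))$, giving
\begin{equation}
\bar{U}(p,u(p),\nabla u(p)) - \bar{U}(p,v(p),\nabla v(p)) = \int_0^1 \Bigl[ \partial_s \bar{U}(\gamma(t))\,(u(p)-v(p)) + \partial_{y_\alpha}\bar{U}(\gamma(t))\,(u_\alpha(p) - v_\alpha(p)) \Bigr] \, dt.
\end{equation}
Taking absolute values, pulling the supremum out of the integral, and bounding $|\partial_s \bar U|$ and $|\partial_{y_\alpha} \bar U|$ along $\gamma$ by $C_{\bar U}$ yields the claimed estimate.

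There is no real obstacle here: the statement is exactly the standard $C^1$ Lipschitz bound for a function in its non-frozen arguments, and the only mild care needed is to confirm that the straight segment stays inside the domain on which the supremum $C_{\bar U}$ is taken, which follows immediately from the convex-combination observation above. The lemma does not require any structural properties of $\bar U$ beyond $C^1$-regularity, so the proof is just one line of fundamental theorem of calculus plus bookkeeping.
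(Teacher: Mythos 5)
Your proof is correct and uses exactly the same argument as the paper: apply the fundamental theorem of calculus and the chain rule along the straight segment from $(p,v(p),\nabla v(p))$ to $(p,u(p),\nabla u(p))$, then bound the integrand by $C_{\bar U}$. One small imprecision: from $|s|\le \|u\|_{C^1}+\|v\|_{C^1}$ and $|y|\le \|u\|_{C^1}+\|v\|_{C^1}$ separately you cannot conclude $|s|+|y|\le \|u\|_{C^1}+\|v\|_{C^1}$; the clean way is to note directly that $|s|+|y|\le (1-t)\bigl(|v(p)|+|\nabla v(p)|\bigr)+t\bigl(|u(p)|+|\nabla u(p)|\bigr)\le (1-t)\|v\|_{C^1}+t\|u\|_{C^1}\le \|u\|_{C^1}+\|v\|_{C^1}$, which keeps $\gamma$ in the region over which $C_{\bar U}$ is taken.
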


\begin{proof}
Using the fundamental theorem of calculus and the chain rule gives
\begin{align}
	&\bar{U} (p,u , \nabla u ) - \bar{U} (p,v , \nabla v )   = (u -v) \int_0^1 
	 \partial_s \bar{U} (p,t(u-v) + v, t(\nabla u- \nabla v) + \nabla v) \, dt  \\
	 &\qquad \qquad + \left(  \partial_{p_{\alpha}} (u -v) \right)   \int_0^1  
	\partial_{y_{\alpha}} \bar{U} (p,t(u-v) + v, t(\nabla u- \nabla v) + \nabla v) \, dt \notag \, ,
\end{align}
where $u$, $v$, $\nabla u$ and $\nabla v$ are all evaluated at $p$.
\end{proof}

\begin{proof}[Proof of Lemma \ref{t:nonlinQ}]
 To get (Q), use Proposition
 \ref{p:formofQ} to write $\cQ(u) - \cQ(v)$ as
 \begin{align}
	\cQ(u)- \cQ(v) &= \bar{f}(p,u,\nabla u)   - \bar{f}(p,v,\nabla v)    + \dv_{\Sigma} \left( \bar{W} (p,u,\nabla u)  -  \bar{W} (p,v,\nabla v) 
	 \right)  \\
	 &\qquad + \langle \nabla  \bar{h} (p,u , \nabla u) , \bar{V} (p,u,\nabla u) \rangle 
	 -  \langle \nabla  \bar{h} (p,v , \nabla v) , \bar{V} (p,v,\nabla v) \rangle 
	  \, ,  \notag
\end{align}
where $u$, $v$, $\nabla u$ and $\nabla v$ are all evaluated at $p$.
 Define 	$f (p)  =    \bar{f}(p,u,\nabla u)   - \bar{f}(p,v,\nabla v) $ and 
		$W(p)  =  \bar{W} (p,u,\nabla u)  -  \bar{W} (p,v,\nabla v)$ and
 write the remainder  as
\begin{align}
	  \langle \nabla \bar{h} (p,u , \nabla u) , \bar{V} (p,u,\nabla u)   -   \bar{V} (p,v,\nabla v) \rangle  
	  + \langle \nabla \left(   \bar{h} (p,u , \nabla u)  
	 -   \bar{h} (p,v , \nabla v) \right) , \bar{V} (p,v,\nabla v) \rangle  \, . \notag
\end{align}
Finally, define $\bar{h}_u = \bar{h} (p,u,\nabla u)$,
$\bar{V}_v = \bar{V} (p,v, \nabla v)$, $V   = \bar{V} (p,u,\nabla u)   -   \bar{V} (p,v,\nabla v) $ and{\footnote{We subtracted $H(p) \, (u-v)$  to kill off the  non-zero term in the first order Taylor series of $\bar{h}$.}}
\begin{align}
	h  &= 	\bar{h} (p,u , \nabla u)  
	 -   \bar{h} (p,v , \nabla v) - H(p) \, (u-v)	 
			\, .
\end{align}
  It remains to prove the  bounds for the quantities.

To bound $|f|$, use Lemma \ref{l:calculus} and then (P1) from Proposition
 \ref{p:formofQ}  to get
  \begin{align}
  	|f| \leq C_{\bar{f}} \, \left( |u-v| + |\nabla u - \nabla v| \right)  \leq C \, \left( \|u \|_{C^1} + \|v \|_{C^1} \right) \, 
	\left( |u-v| + |\nabla u - \nabla v| \right)   \, ,
  \end{align}
  where the last inequality used that  $\bar{f}$ is $C^2$ and $ \partial_s \bar{f} (p,0,0)= \partial_{y_{\alpha}} \bar{f}(p,0,0) =0$      to bound $C_{\bar{f}}$ by $C \, \left( \|u \|_{C^1} + \|v \|_{C^1} \right) $ for a constant $C$ depending on the second derivatives of $\bar{f}$.   
  The bound on $W$ follows  similarly since $\partial_s \bar{W}(p,0,0) = \partial_{y_{\beta}} \bar{W}(p,0,0) = 0$ by (P2).
   To bound $h$, apply
  Lemma \ref{l:calculus} to   $\bar{h} - H \, s$ 
and use (P3)  to get $ \partial_{y_{\beta}} \left( \bar{h} - H \, s \right) (p,0,0) = 
	  0$ and 
 \begin{align}
 	\partial_s \left( \bar{h} - H \, s \right) (p,0,0) &= 
	\partial_s   \bar{h}  (p,0,0) - H = 0 \,  	 \, .
 \end{align}
  To bound  $V$, use Lemma \ref{l:calculus} to get{\footnote{We do not get the  smallness in the $C^1$ norms of $u$ and $v$ since the   $y$ derivative  of  $\bar{V}$ is not $0$ at $0$.     }}  that
$
	|V|  \leq C_{\bar{V}} \,  \left( |u-v| + |\nabla u - \nabla v| \right)$.
   
  Next, since $\bar{h} (p,0,0) = 0$ and $\bar{V}(p,0,0)$ by (P3) and (P4)
  in Proposition
 \ref{p:formofQ}, we can apply Lemma \ref{l:calculus}  (with one of the functions equal to $0$) to get
    \begin{align}
  	\left| \bar{h}_u \right|  &= \left| \bar{h} (p,u,\nabla u)| 
	- \bar{h} (p,0,0) \right| \leq C_{\bar{h}} \,  \left( |u| + |\nabla u| \right) \, , \\
	\left| \bar{V}_v \right|  &= \left| \bar{V} (p,v,\nabla v)| 
	- \bar{V} (p,0,0) \right| \leq C_{\bar{V}} \,  \left( |v| + | \nabla v| \right) \, .
  \end{align}
  
  To bound  $|\nabla \bar{h}_u|$, use the chain rule to get
    \begin{align}		\label{e:firstdb}
  	\partial_{p_{\alpha}} \bar{h}_u &    
	 = \partial_{p_{\alpha}}  \bar{h}  (p,u, \nabla u)  
	+ \partial_s  \bar{h}  (p,u, \nabla u)  \, u_{\alpha} + \partial_{y_{\beta}}  \bar{h}  (p,u, \nabla u)  \, u_{\alpha \beta} \, . 
  \end{align}
 For the first term, we use that 
  $\bar{h} (p,0,0) = 0$ by (P3) 
  in Proposition
 \ref{p:formofQ} and, thus also $\partial_{p_{\alpha}}  \bar{h}  (p,0,0) =0$, so we can apply Lemma \ref{l:calculus} to 
  $\partial_{p_{\alpha}}  \bar{h}$, $u$ and $0$ to get
 \begin{align}
 	\left| \partial_{p_{\alpha}}  \bar{h}  (p,u, \nabla u)\right| = 
	\left| \partial_{p_{\alpha}}  \bar{h}  (p,u, \nabla u)  - \partial_{p_{\alpha}}  \bar{h}  (p,0, 0)  \right| \leq
	C \, \left( |u| + |\nabla u| \right)  \, .
 \end{align}
 The second term is bounded by $C \, |\nabla u|$.  For the third term, we use that $\partial_{y_{\beta}} \bar{h} (p,0,0) =0$ by 
 (P3) 
  in Proposition
 \ref{p:formofQ}, so Lemma \ref{l:calculus}  gives
 \begin{align}
 	 \left| \partial_{y_{\beta}}  \bar{h}  (p,u, \nabla u) \right| = \left|  \partial_{y_{\beta}}  \bar{h}  (p,u, \nabla u)   
	 -  \partial_{y_{\beta}}  \bar{h}  (p,0, 0) \right| \leq C \, \left( |u| + |\nabla u| \right)  \, . 
 \end{align}
 Putting these three bounds back into \eqr{e:firstdb} gives
 \begin{align}
 	\left| \nabla \bar{h}_u \right| \leq C \, \left( |u| + |\nabla u| \right) \, \left( 1 + \left| \Hess_u \right| \right) \, .
 \end{align}
The bound on $\dv_{\Sigma} \bar{V}_v$ follows similarly from the chain rule.

It now remains only to prove \eqr{e:lastone35} and \eqr{e:lastone35b}.  
Since $W(p)  =  \bar{W} (p,u,\nabla u)  -  \bar{W} (p,v,\nabla v)$, we have
\begin{align}	\label{e:for311}
	|\partial_{p_{\alpha}} W| &\leq   \left|
	 \bar{W}_{p_{\alpha}} (p,u,\nabla u)  -  \bar{W}_{p_{\alpha}} (p,v,\nabla v)
	\right| + \left|
	 \bar{W}_s (p,u,\nabla u)u_{{\alpha}}  -  \bar{W}_s (p,v,\nabla v)v_{{\alpha}}
	\right| \notag \\
 &+ \left|
	 \bar{W}_{y_{\beta}} (p,u,\nabla u) u_{\alpha \beta} -  \bar{W}_{y_{\beta}} (p,v,\nabla v)v_{\alpha \beta} 
	\right|
			\, .
\end{align}
Using Lemma \ref{l:calculus} and, by (P2), $ \bar{W}_{p_{\alpha}s}  (p,0,0)=  \bar{W}_{p_{\alpha}y_{\beta}} (p,0,0)=0$, we get
\begin{align}
	 \left|
	 \bar{W}_{p_{\alpha}} (p,u,\nabla u)  -  \bar{W}_{p_{\alpha}} (p,v,\nabla v)
	\right| \leq C \, \left( \| u \|_{C^1} + \| v \|_{C^1} \right) \, \left( |u-v| + |\nabla u - \nabla v| \right) \, .
\end{align}
To bound the second term in \eqr{e:for311}, we start with the triangle inequality
\begin{align}
	 \left|
	 \bar{W}_s (p,u,\nabla u)u_{{\alpha}}  -  \bar{W}_s (p,v,\nabla v)v_{{\alpha}}
	\right| &\leq  \left|
	 \bar{W}_s (p,u,\nabla u)  -  \bar{W}_s (p,v,\nabla v) 
	\right| \, |u_{\alpha}| \notag \\
	&\quad +  \left|
	  \bar{W}_s (p,v,\nabla v) \right| \, \left| u_{\alpha} - v_{{\alpha}}
	\right|  	\,  .
\end{align}
We use that $\bar{W}_s$ is  locally Lipschitz to get
\begin{align}
	   \left|
	 \bar{W}_s (p,u,\nabla u)  -  \bar{W}_s (p,v,\nabla v) 
	\right| \, |u_{\alpha}|  \leq C \, |\nabla u| \left( |u-v| + |\nabla u - \nabla v| \right)	\,  .
\end{align}
Similarly, using also that $\bar{W}_s (p,0,0)$ by (P2), we get
\begin{align}
	  \left|
	  \bar{W}_s (p,v,\nabla v) \right| \, \left| u_{\alpha} - v_{{\alpha}}
	\right|  \leq C \, \left( |v| + |\nabla v|\right) \, |\nabla u - \nabla v| 	\,  .
\end{align}
 
 For the last term in \eqr{e:for311}, we begin with the triangle inequality
 \begin{align}	 
	  \left|
	 \bar{W}_{y_{\beta}} (p,u,\nabla u) u_{\alpha \beta} -  \bar{W}_{y_{\beta}} (p,v,\nabla v)v_{\alpha \beta} 
	\right| &\leq  \left|
	 \bar{W}_{y_{\beta}} (p,u,\nabla u)  -  \bar{W}_{y_{\beta}} (p,v,\nabla v) 
	\right| \, |u_{\alpha \beta}| \notag \\
	&\quad +  \left|
	  \bar{W}_{y_{\beta}} (p,v,\nabla v) \right| \, \left| u_{\alpha \beta} - v_{{\alpha \beta}}
	\right|  	\,  .
\end{align}
Since $ \bar{W}_{y_{\beta}}$ is locally Lipschitz, we get
 \begin{align}	 \label{e:worstterm}
	 \left|
	 \bar{W}_{y_{\beta}} (p,u,\nabla u)  -  \bar{W}_{y_{\beta}} (p,v,\nabla v) 
	\right| \, |u_{\alpha \beta}|  \leq C \, \left( |u-v| + | \nabla u - \nabla v| \right) \, | u_{\alpha \beta} |	\,  .
\end{align}
Similarly, using also that $\bar{W}_{y_{\beta}} (p,0,0)$ by (P2), we get
 \begin{align}	 
	 \left|
	  \bar{W}_{y_{\beta}} (p,v,\nabla v) \right| \, \left| u_{\alpha \beta} - v_{{\alpha \beta}}
	\right|  \leq C \, 	 \left( |v| + | \nabla v| \right) \, | u_{\alpha \beta} - v_{{\alpha \beta}}
 | \,  .
\end{align}
Substituting these bounds into \eqr{e:for311} gives the desired bound on $|\nabla W|$ in  \eqr{e:lastone35}. 

To prove   \eqr{e:lastone35b}, first use that
$
h   = 	\bar{h} (p,u , \nabla u)  
	 -   \bar{h} (p,v , \nabla v) - H(p) \, (u-v)$ to get	 
	 \begin{align}	\label{e:1LAST}
	 	|\partial_{p_{\alpha}} h| &\leq |\nabla H| \, |u-v| +  |	\bar{h}_{s} (p,u , \nabla u)  u_{\alpha}
	 -   \bar{h}_{s} (p,v , \nabla v) \, v_{\alpha} - H (u_{\alpha} - v_{\alpha}) | \notag \\
	 & +  |	\bar{h}_{p_{\alpha}} (p,u , \nabla u)  
	 -   \bar{h}_{p_{\alpha}} (p,v , \nabla v) | +  |	\bar{h}_{y_{\beta}} (p,u , \nabla u)  u_{\alpha \beta}
	 -   \bar{h}_{y_{\beta}} (p,v , \nabla v) v_{\alpha \beta} | \, .
	 \end{align}
	 The first and third terms on the right in \eqr{e:1LAST} are clearly bounded by $C\, |u-v|$ and, since $h_{p_{\alpha}}$ is locally Lipschitz, $C\, \left( |u-v| + |\nabla u - \nabla v | \right)$.  To bound the last term in \eqr{e:1LAST}, we use the triangle inequality to get
	  \begin{align}
	 	   |	\bar{h}_{y_{\beta}} (p,u , \nabla u)  u_{\alpha \beta}&
	 -   \bar{h}_{y_{\beta}} (p,v , \nabla v) v_{\alpha \beta} | \leq   |	\bar{h}_{y_{\beta}} (p,u , \nabla u)   
	 -   \bar{h}_{y_{\beta}} (p,v , \nabla v)| \, | u_{\alpha \beta} |  \notag \\
	 &\quad +   |	\bar{h}_{y_{\beta}} (p,u , \nabla u)  | \, |u_{\alpha \beta}
	 -    v_{\alpha \beta} | \\
	 &\leq C \, | \Hess_u | \, \left( |u-v| + |\nabla u - \nabla v | \right) + C \, (|u| + |\nabla u|) \left| \Hess_u - \Hess_v \right|
	 \notag
	 \, , 
	 \end{align}
	 where the last inequality uses that $\bar{h}_{y_{\beta}}$ is locally Lipschitz and $\bar{h}_{y_{\beta}} (p,0,0)=0$.  Finally, to bound the second term on the right in
	  \eqr{e:1LAST}, we use the triangle inequality to get
	  \begin{align}	 
	 	  |	\bar{h}_{s} (p,u , \nabla u)  u_{\alpha}
	 -   \bar{h}_{s} (p,v , \nabla v) \, v_{\alpha}  | &\leq   |\bar{h}_{s} (p,u , \nabla u) - \bar{h}_{s} (p,v , \nabla v)  | \, |u_{\alpha}|  \notag \\
	  &\quad + |\bar{h}_{s} (p,u , \nabla u) | \, |\nabla u - \nabla v| \, ,
	 \end{align}
	which we bound similarly. Combining the various bounds gives   \eqr{e:lastone35b}.
 \end{proof}

\end{document}